\documentclass[a4paper,12pt,article]{iopart}
\usepackage{currfile}
\usepackage{graphicx}
\usepackage{amssymb}
\usepackage{amstext}
\usepackage[expansion=true]{microtype}

\usepackage{bm}
\usepackage{epstopdf}
\usepackage{color}
\usepackage{hyperref}
\usepackage{mhequ}
\usepackage{times}
\usepackage{amsthm}
\usepackage{accents}
\usepackage{enumitem}
\usepackage[titletoc,title]{appendix}
\usepackage{perpage} 
\MakePerPage{footnote} 
\usepackage{booktabs}

\usepackage{sub_JP}

\def\Label#1{}

\def\complex{{\bf C}}
\DeclareGraphicsRule{.tif}{png}{.png}{`convert #1 `dirname #1`/`basename #1 .tif`.png}
\let\phi=\varphi
\let\kappa=\varkappa

\usepackage{chngcntr}
\counterwithin*{equation}{section}
\renewcommand{\i}{{\mathrm{i}}}

\def\dt{\frac{\d }{\d t}}
\def\d{{\rm d}}

\let\epsilon=\varepsilon
\let\theta=\vartheta

\let\rho=\varrho
\def\IM{\bI\cdot\bmm}
\def\mphi{\bmm\cdot\bphi}
\def\IP{(\bI,\bphi)}
\def\NNR{^{(0,\NR)}}

\def\ONR{^{(1,\NR)}}

\def\x#1{^{(#1)}}
\def\xx#1{^{[#1]}}

\def\ie{{\it{i.e.}}}
\def\eg{{\it{e.g.}}}

\def\bmm{{\bm {\mu}}}

\def\bphi{\bm{\phi}}

\def\bI{{\bf{I}}}

\def\NN{{\cal N}}

\def\BB{{\cal B}}
\def\DD{{\cal D}}
\def\DDR{{\cal D^{\real}\kern-0.8em}}

\def\OOs{{\cal O}}
\def\OOd{\widehat{\cal O}}

\def\torus{{\bf T}}
\def\RR{{R}}
\def\integer{{\bf Z}}
\def\real{{\bf R}}
\def\lref#1{Lemma~\ref{#1}}
\def\aref#1{Assumption~\ref{#1}}
\def\dref#1{Definition~\ref{#1}}
\def\fref#1{Fig.~\ref{#1}}
\def\cref#1{Corollary~\ref{#1}}
\def\pref#1{Proposition~\ref{#1}}
\def\tref#1{Theorem~\ref{#1}}

\def\sref#1{Sect.~\ref{#1}}
\def\rref#1{Remark~\ref{#1}}

\def\halffact{{\textstyle{\frac{1}{2!}}}}
\def\thirdfact{{\textstyle{\frac{1}{3!}}}}

\def\norm#1#2{\|#1\|_{#2}}
\def\bnorm#1#2{\left\|#1\right\|_{#2}}
\newtheorem{theorem}{Theorem}[section]
\newtheorem{proposition}[theorem]{Proposition}
\newtheorem{lemma}[theorem]{Lemma}
\newtheorem{observation}[theorem]{Observation}
\newtheorem{corollary}[theorem]{Corollary}

\newtheorem{assumption}[theorem]{Assumption}
\newtheorem{definition}[theorem]{Definition}

\newtheorem{remark}[theorem]{Remark}

\definecolor{mydarkgreen}{rgb}{0.0, 0.5, 0.0}

\def\poiss#1#2{\left\{#1,#2\right\}}
\def\ad{{\rm ad}}

\def\rs{{r,\sigma}}
\def\Im{{\rm Im}}

\def\R{{\rm R}}
\def\NR{{\rm NR}}

\def\supp{{\rm{supp}}}
\def\fin{{\mathrm {fin}}}
\let\tilde=\widetilde

\begin{document}
\null
\title[Dissipative Chains of Rotators]{Energy Dissipation in Hamiltonian Chains of Rotators}
\author{No\'e Cuneo$^1$,  Jean-Pierre Eckmann$^{2,3}$, C. Eugene Wayne$^4$}
\address{$^1$ Department of Mathematics and  Statistics, McGill
  University, Montreal, Canada}
\address{$^{2}$ D\'epartement de Physique Th\'eorique, University of
  Geneva, Switzerland}
\address{$^{3}$ Section de Math\'ematiques, University of
  Geneva, Switzerland}
\address{$^4$ Department of Mathematics and Statistics,
Boston University, USA}
\begin{abstract}
We discuss, in the context of energy flow in high-dimensional systems and
Kolmogorov-Arnol'd-Moser (KAM) theory, the behavior of a chain of rotators (rotors)
which is purely Hamiltonian, apart from dissipation at just one end.
We derive bounds on the dissipation rate which become arbitrarily small in certain physical regimes,
and we present numerical evidence that these bounds are sharp.
We relate this to the decoupling of non-resonant terms as is
known in KAM problems.
\end{abstract}

\tableofcontents

\section{Introduction}\Label{s:intro}

 In this paper, we consider a chain of coupled rotators.  Our basic system is
Hamiltonian,
and we are specifically interested in the way in which
energy is transported through the system.  We study this by adding
dissipation
to the first rotator,
and then considering initial conditions in which the energy of the system is
localized in a rotator far
from the one which dissipates energy.  
Our main result is a lower bound on the energy dissipation rate. The lower bound becomes arbitrarily small as either the energy of the fast rotator is made very large, or as the number of rotators increases. Although at the moment our rigorous estimates are ``one-sided'', we present numerical evidence that they are sharp, in the sense that in some regimes energy dissipation does scale like our lower bound, indicating that energy transport in the system is indeed extremely slow.

Hamiltonian systems, non-equilibrium problems and the interplay of rigorous
results with computer
based insight have been central themes in ``Nonlinearity'' since its
origin 30 years ago
and as such we feel
this paper is an appropriate way to celebrate this anniversary.
We thus use this introduction to sketch a (relatively personal) view on some
highlights in the subject of our paper, with emphasis on papers which have
appeared in Nonlinearity.

We start with Hamiltonian systems:  Here, our problem is connected with
ideas
like the KAM and
Nekhoroshev theorems which have appeared repeatedly in Nonlinearity
\cite{Benettin_Gallavotti_1986,Nekoroshev_1977,Loshak_1992,Lochak_Neistad_1992,Lochak_1993,Poeschel_1993,MacKay_Aubry_1994}.
The common feature of
both of these approaches is the  realization that in certain limiting
regimes
(usually small coupling) Hamiltonian systems are close to their decoupled
variants.  One typically takes advantage of this by transforming the
system into some sort of normal form by canonical transformations. In the
particular problem considered
in this paper the situation is more reminiscent of the Nekhoroshev than KAM
theorems since we will
also make a finite number of transformations of our original system to
bring it to a form in which
the dissipation can be directly estimated.
Another key idea is that periodic orbits play an especially important role in the
stability
of systems with respect to perturbations.
Early papers in this context
are \cite{Nekoroshev_1977, Loshak_1992, Poeschel_1993}.
In particular, Lochak pointed out that
the evolution of Hamiltonian systems in the
neighborhood of periodic orbits occurred only on a very long time scale.  Of
particular interest
in our context, are the spatially localized periodic orbits known as
``breathers'' \cite{MacKay_Aubry_1994}.  One respect in which our
problem
differs from this
preceding work is that  we do not deal with small coupling, but with high
frequencies. It seems this problem is not readily transformed into one with
small coupling, but is rather close to ideas having to do with breathers,
isolated high-energy states in extended systems
\cite{Rey-Bellet_Thomas_2000,Rey-Bellet_Thomas_2002,Hairer_2009,Hairer_Mattingly_2009,Cuneo_Eckmann_Poquet_2015,Cuneo_Eckmann_2016,Cuneo_Poquet_2016}.

We find that our system approaches a
``quasi-stationary'' state in which the rotators oscillate with
an amplitude that decreases in a regular way from
the rotator in which the original energy was concentrated, to the
dissipating rotator.  These states
are reminiscent of breather solutions in Hamiltonian lattices \cite{MacKay_Aubry_1994,Bambusi_1996,Bambusi_Nekhoroshev_1998,Bambusi_1999} which are known to be stable over long
time intervals even in the purely Hamiltonian setting.
That these breather states should inhibit transport in coupled lattice
systems
has already been
suggested by several authors \cite{Rey-Bellet_Thomas_2000,Rey-Bellet_Thomas_2002,Hairer_2009,Hairer_Mattingly_2009}.

They are also reminiscent of metastable states in weakly viscous fluid flows
where invariant structures present in the conservative limit (\ie,~Euler
equations) are replaced by families of solutions which seem to determine the
long relaxation toward the rest state when weak dissipation is introduced
\cite{Beck_Wayne_2013,Gallay_2012}.  In a similar fashion, in our problem,
the breather
solutions which are exact, periodic solutions in the conservative limit seem
to be replaced by nearly periodic dissipative solutions along which the
system evolves toward the rest state.

The combination of such problems with dissipation is more recent: A careful
study was done by \cite{Rey-Bellet_Thomas_2002,DeRoeck_Huveneers_2015}. These studies
have in
common that they estimate the rate of dissipation.
Our goal in the present work is more restricted than these prior
studies.  We
are particularly interested
in identifying the states of the system which give rise to the slowest
possible
dissipation rate.  Because
we consider problems in which the initial energy is localized far from the
center of dissipation, these
correspond to states in which the transport of energy through the
lattice is especially slow.
In another direction dissipation in Hamiltonian systems has become an
intensive
object of study in planetary studies \cite{CALLEJA2013978,Celletti_Chierchia_2009,Celletti_Lhotka_2012,Celletti_Lhotka_2013}.

In \cite{Celletti_Lhotka_2012,Celletti_Lhotka_2013} the authors study the evolution of trajectories in
nearly integrable Hamiltonian systems subjected to very general, small
dissipative forces.  They show that solutions remain close to the integrable
solutions for long times by using a combination of canonical
transformations, and then representing the dissipation in the canonically
transformed variables.

In another, related, direction are
studies of out-of-equilibrium systems, usually in a stochastic context (the
stochasticity modeling heat-baths)
\cite{Eckmann_Pillet_Rey-Bellet_1999a,
Eckmann_Pillet_Rey-Bellet_1999b,
Rey-Bellet_Thomas_2000,
Rey-Bellet_Thomas_2002,Gallavotti_Cohen_2004,Eckmann_Young_2006,Eckmann_Hairer_2000}. In most of these papers, one
of the problems is to show that transport actually happens, and that
the energy of the system does not blow up due to the energy input from
the heat baths \cite{Eckmann_Pillet_Rey-Bellet_1999b,Hairer_2009}.
There is now a quite abundant literature about non-equilibrium steady
states of
simple, 1-dimensional systems, where masses or rotators are coupled to heat
baths \cite{Kipnis_Marchioro_Presutti_1982,Bonetto_Lebowitz_Rey-Bellet_2000,Bernardin_Olla_2011}. One of the issues in this
context
is to prove the existence of stationary states for such systems. It has been
recognized early on that the convergence for ``arbitrary'' initial
conditions to
such stationary states can be very slow, in particular, when one of the
masses
or rotators has very high energy \cite{Lepri_Livi_Politi_2003,Lepri_Livi_Politi_2016}.
This phenomenon is rightly attributed to a loss of effective
coupling between a high energy site and its low energy neighbors,
because of an
averaging effect of the rapidly oscillating force.
Combining some of the two aspects above
  \cite{DeRoeck_Huveneers_2015} studied heat
transport in a chain of oscillators, in the limit of weak coupling (see also
\cite{Lefevere_Schenkel_2004}).

In this paper, we address a much simpler problem, namely a purely
deterministic system (\ie, without any stochastic heat baths) with only
dissipation and an otherwise purely Hamiltonian
interaction. Obviously, such a system will eventually converge to the
state where nothing moves, but we are interested in the initial phase
 where the energy is still very high and the system
loses energy (very slowly) through the dissipation. The advantage of
considering such a simple system is the possibility of understanding
it quite completely.

More precisely, we study in detail the behavior of a
chain of $n$ coupled rotators (with \emph{nearest neighbor coupling})
\emph{with localized dissipation on the first of these rotators}.
We are interested in the evolution of this system in a region of phase
space where almost all of the energy of the system is located
in one of the sites.
 Such a setup is of course
reminiscent of breathers, but now we have
dissipation.

 In particular, our results are consistent with the suggestion put
forth by \cite{Lepri_Livi_Politi_2003} that even the notion of
stationary state in large (infinite)
such systems might need a new definition. Because certain regions of phase
space (\eg, the states we study in which the initial energy is localized far
from any source of dissipation) take so long to converge to the stationary
state, the actual limiting state may not be the most relevant quantity for
practical effects like heat conduction or the Fourier law.

\section{Model and statement of results}

We fix the number $n>1$ of rotators once and for all, and use the phase space $\Omega \equiv \real^n\times
\torus^n$ for the actions $\bI=(I_1,\dots,I_n)$ and the angles
$\bphi=(\phi_1,\dots,\phi_n)$. We adopt the convention $\torus = \real /2\pi$. We shall often write $x = \IP$. We consider Hamiltonians of the form
\begin{equ}\label{e:nn}
  H\x0\IP = \frac{1}{2}\sum_{i=1}^{n} I_i^2 + \sum_{i=1}^{n-1} U_{i}(\phi_{i+1}-\phi_{i})= h\x0 (\bI)+f\x0(\bphi)~,
\end{equ}
where the functions $U_i : \torus \to \real $ are real, smooth interaction potentials subject to
the following two assumptions.
\begin{assumption}\label{d:22}The Fourier series of each $U_i$ contains finitely many modes.
\end{assumption}
\begin{assumption}\label{d:23}Each $U_i$ is {\em
    non-degenerate} in the sense that there is no $\phi\in \torus$ for
  which $U_{i}'(\phi) = U_{i}''(\phi) = 0$. In other words, the interaction force
and its derivative never vanish simultaneously.
\end{assumption}
Without loss of generality, we also assume that the $U_i$ contain no constant (and physically irrelevant) Fourier mode.

We fix a dissipation coefficient $\gamma>0$ and consider, for $i=1, \dots, n$, the evolution equations
\begin{equa}[e:motion2]
  \dot I_i &= -\partial_{\phi_i} H\x0 -\delta_{i,1} \gamma I_1~,\\
  \dot \phi_i &= \partial_{I_i} H\x0~.
\end{equa}


We fix $2\leq k \leq n$.
We will concentrate on domains in phase space where $I_k$ is of order $L \gg 1$, and the other actions are of order $\rho \sim 1$. Without loss of generality, we will consider only the case $L>0$, since the case $L<0$ is identical by symmetry.

\begin{definition}
For $L,\rho>0$, we denote by $\BB_{L,\rho}$ the open set
\begin{equs}
\BB_{L, \rho} = \left\{(\bI, \bphi) \in \Omega: | I_i - L \delta_{i,k}| < \rho,~ i=1, 2, \dots, n\right\}~.
\end{equs}
\end{definition}

Our main result is

\begin{theorem}\label{t:main}
Consider the system of equations  \eref{e:motion2}, under Assumptions~\ref{d:22} and \ref{d:23}. Fix $\alpha > 0$ and $\rho>0$. Then, there exist constants $L_0, \rho^*, C_1>0$ such that for all
$L\geq L_0$ and all initial conditions $x(0) \in \BB_{L,\rho}$,
the following holds for $T\equiv \alpha L^{2k-3}$:
\begin{enumerate}
\item[(i)]	We have
\begin{equs}
x(t) \in \BB_{L, \rho^*}~, \quad 0\leq t \leq T~.
\end{equs}

\item[(ii)]	Moreover,
  \begin{equ}\label{e:dissipation}
    H(x(T))-H(x(0)) \leq   -C_1 \frac{T}{L^{4k-6}}~.
  \end{equ}
\end{enumerate}
\end{theorem}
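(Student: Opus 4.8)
Here is how I would go about proving \tref{t:main}.

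The plan is to implement the normal-form programme announced in the introduction, using the elementary identity $\dt H=-\gamma I_1^2$, immediate from \eref{e:motion2}, as the bridge to both conclusions: it turns (i) into a confinement statement, and turns (ii) into the problem of bounding $\int_0^T I_1(t)^2\,\d t$ from below. Write $\epsilon=1/L$; throughout one works inside $\BB_{L,\rho^*}$, where $\phi_k$ is a fast angle, $\dot\phi_k=I_k\in(L-\rho^*,L+\rho^*)$, while the remaining frequencies $\dot\phi_i=I_i$ are $O(\rho^*)$.

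The first step is perturbative. I would build a finite composition $\Psi$ of near-identity canonical transformations that kills the $\phi_k$-dependence of $H\x0$ order by order, arriving at $H\x0\circ\Psi=h\x0(\bI)+g(\bI,\phi_1,\dots,\phi_{k-1},\phi_{k+1},\dots,\phi_n)+\mathcal R$, where $g$ does not depend on $\phi_k$ and $\|\mathcal R\|_{C^1(\BB_{L,2\rho^*})}\le C\epsilon^{2k-2}$. Each step solves a cohomological equation $I_k\,\partial_{\phi_k}\chi=(\text{current }\phi_k\text{-dependent part})$; here \aref{d:22} is essential, since the finiteness of the Fourier support makes $I_k\partial_{\phi_k}$ invertible — by $O(\epsilon)$, as $I_k\sim L$ — on functions of zero $\phi_k$-average, so every generator carries a gain $\epsilon$ and all estimates stay polynomial in $L$. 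The real work is to track how the $\phi_k$-dependence \emph{propagates down the chain} together with its dependence on the actions: a careful count shows that each step toward site $1$ costs \emph{two} powers of $\epsilon$, so that, undoing $\Psi$, the change of variables modifies $I_j$ only through a fast oscillation (in $\phi_k$) of amplitude $\sim\epsilon^{2(k-j)-1}$; in particular $I_1=I_1^{\rm new}+\delta_1$ with $\delta_1$ of order $\epsilon^{2k-3}$ — exactly the power that matches the right side of \eref{e:dissipation}. This also explains the time scale: $T=\alpha L^{2k-3}$ is the longest window on which an $O(\epsilon^{2k-2})$ remainder is still negligible.

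Part (i) then follows by a continuity/bootstrap argument in $\rho^*$. In the new variables all $\phi_k$-dependence sits in $\mathcal R$, so $I_k$ drifts by at most a bounded amount (controlled in terms of $\alpha$) on $[0,T]$ — the dissipation, localized at site $1$, affects $I_k$ only through the deep, hence tiny, part of $\Psi$ that reaches site $1$, and does not spoil this. Meanwhile the sub-chain of sites $1,\dots,k-1$ is, in the new variables, a Hamiltonian system with energy bounded below, dissipating weakly at site $1$ and coupled to the fast rotator only through $\mathcal R$; its energy therefore grows by at most $o(1)$ over $[0,T]$, so — being $O(\rho)$ at $t=0$ — it keeps all $I_i$ ($i<k$) of size $O(\rho^*)$, uniformly in $L$; likewise for the sites $k+1,\dots,n$. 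Pulling back through $\Psi$, which displaces points by $O(\epsilon)$, then gives $x(t)\in\BB_{L,\rho^*}$ on $[0,T]$ once $\rho^*$ has been fixed large enough (depending only on $\rho$ and $\alpha$), which closes the bootstrap.

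The hard part is the dissipation lower bound (ii): one must show $\int_0^T I_1(t)^2\,\d t\ge c\,\epsilon^{2(2k-3)}T$ for a fixed $c>0$, whereupon integrating $\dt H=-\gamma I_1^2$ yields \eref{e:dissipation} with $C_1=\gamma c$. The normal form exhibits the leading fast part of $I_1$ as an oscillation in $\phi_k$ with slowly varying amplitude $\sim\epsilon^{2k-3}A(t)$, where $A(t)$ is a definite combination of the leading Fourier coefficients of $U_{k-1}$ with the factors $U_{k-2}''(\phi_{k-1}-\phi_{k-2}),\dots,U_1''(\phi_2-\phi_1)$ evaluated along the trajectory. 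Everything then reduces to showing that $|A(t)|$ is bounded away from $0$ on a definite fraction of $[0,T]$, uniformly over $x(0)\in\BB_{L,\rho}$, and that $\int_0^T I_1^2$ cannot be made small by a cancellation between this fast oscillation and the slow part $I_1^{\rm new}$. This is where \aref{d:23} is indispensable: non-degeneracy forces each zero of each $U_i''$ to be simple and to occur where $U_i'\ne0$, so that $\{A=0\}$ is a thin transversal set \emph{and} the force $U_i'$, being nonzero there, drives $\phi_{i+1}-\phi_i$ through it at a definite rate, preventing the slow angles from lingering where the transmitted forcing cancels; quantifying this ``no-sticking'' property of the slow dynamics — together with the non-degeneracy of the equilibria of the low sub-chain, again from \aref{d:23} — should give $\theta,\delta_0>0$ with $|\{t\in[0,T]:|A(t)|<\delta_0\}|\le(1-\theta)T$, hence the bound. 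I expect precisely this quantitative transversality estimate, which rules out a conspiracy in which the cascade-transmitted fast forcing cancels at site $1$ for most of the window uniformly in $L$ and over all of $\BB_{L,\rho}$, to be the genuine obstacle: it plainly fails for degenerate potentials — which admit resonant trajectories with $A\equiv0$ along which no dissipation lower bound can hold — so \aref{d:23} cannot be removed, and turning it into effective estimates for the slow-angle dynamics is where the real difficulty lies.
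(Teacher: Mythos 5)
Your scaffolding matches the paper's: the same normal-form programme (eliminating the $\phi_k$-dependence by Lie transforms, with each step toward site $1$ costing two powers of $L^{-1}$), the same decomposition $I_1=\tilde I_1+P_1$ with $P_1\sim L^{3-2k}$, the same identity $\dt H=-\gamma I_1^2$ as the bridge, and the same choice of time scale $T\sim L^{2k-3}$. You also correctly identify the leading amplitude $A(t)$ as a product of $U_i''$'s times a combination of Fourier coefficients of $U_{k-1}$, which is exactly the paper's $M_1=-U_1''\cdots U_{k-2}''\,G$. So the architecture is right, and (i) is essentially the same bootstrap via the transformed Hamiltonian.

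Where you diverge is in the single hardest step, the uniform lower bound on $\int P_1^2$, and I think your proposed route is substantially harder than what the paper actually does. You want a quantitative transversality statement: that $|\{t:|A(t)|<\delta_0\}|\le(1-\theta)T$ uniformly over initial data, proved by a ``no-sticking'' argument for the slow angles near the zeros of $A$. The paper avoids this entirely. It compares $\tilde x$ on unit time intervals to an explicitly decoupled Hamiltonian flow $\overline x$ (this localization to $[t_0,t_0+1]$ is important and is missing from your sketch: the approximation $\tilde x\approx\overline x$ only holds on $O(1)$ windows, not on $[0,T]$). Then it reduces $\int_0^1 M_1^2$ via an integration by parts in $\phi_k$ (which averages $G^2$ to $\langle G^2\rangle>0$) to $\int_0^1 K^2$ with $K=U_1''\cdots U_{k-2}''$, and proves $\int_0^1 K^2>0$ by a \emph{soft} compactness-plus-analyticity argument, not a quantitative one: if $K\equiv0$ on $[0,1]$, then by real-analyticity of the decoupled flow it vanishes for all $t$; by the finite Fourier support in \aref{d:22} the zero set of $U_\ell''$ is finite, so some difference $\overline\phi_\ell-\overline\phi_{\ell-1}$ must be constant; by \aref{d:23} the corresponding force $U_\ell'$ is a nonzero constant, which would drive the total momentum of the sub-chain $\{\ell,\dots,k-1\}$ to infinity, contradicting that its energy is non-increasing under dissipation. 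This is much cheaper than the transversality estimate you propose, and it is exactly where \aref{d:23} enters. Your approach is not obviously wrong, but you have correctly flagged it as ``the genuine obstacle'' — and the paper's lesson is that you can sidestep it.

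One further genuine gap in your sketch: you note that ``$\int_0^T I_1^2$ cannot be made small by a cancellation between the fast oscillation and the slow part'', but you give no method for the mixed term $2\gamma\int_0^T\tilde I_1 P_1$. Dimensionally it is $O(TL^{3-2k})=O(1)$, the same size as what you want to extract, so it cannot be neglected on size grounds alone. The paper removes it by a finite induction of integrations by parts in the fast angle (solving $\sum_i I_i\partial_{\phi_i}F=f$ at each step), showing the contribution reduces to boundary terms of order $L^{2-2k}$ plus strictly higher-order integrands. Without some version of this homogenization step, the proof of (ii) does not close, regardless of how the positivity of $\int P_1^2$ is established.
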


\begin{remark}
Part (ii) says that the amount of energy loss per unit of time
is {\em at least} proportional to $L^{6-4k}$, when averaged over
times of order $L^{2k-3}$.
Numerical investigation (see \sref{sec:illustration}) indicates that
the power $4k-6$ in the theorem is optimal.
\end{remark}

\begin{remark}
\aref{d:23} is crucial for the theorem above:
as we will show in \sref{sec:nondeg}, dropping \aref{d:23}
may lead to decay rates that are much slower than \eref{e:dissipation}.
\end{remark}

\subsection{The relation to KAM}

Part of our study is closely related to the usual techniques of KAM
theory. However, the addition of dissipation to an otherwise purely
Hamiltonian problem requires significant modifications of the
classical KAM machinery.

  In general, KAM-like calculations are done for weak coupling. In our
  case, the coupling is of order 1 but the energies are very high. It
  is tempting to envisage a scaling which connects our problem to the
  well-studied KAM techniques. We have not been able to find such a
  scaling, probably due to the fact that rotators are akin to pinning
  potentials of infinite power, \ie, potentials like $x^{2m}$ with
  $m=\infty$ (see \cite[Remark 1.1]{Cuneo_Eckmann_Poquet_2015}).

\section{Phenomenology}\label{sec:illustration}

In the regime of interest, the $k$'th rotator holds most of the energy of the system,
and tends to effectively decouple from its neighbors (or its neighbor if $k = n$),
as the interaction forces involving the $k$'th  site oscillate very rapidly and
average out.

The
idea of the proof of \tref{t:main} is as follows. In \sref{sec:general} and \sref{sec:k1},
we will construct new canonical coordinates $\tilde x = (\tilde \bI, \tilde \bphi)$, which
are very close to $x$ when $L$ is large, and which make the decoupling explicit.
More precisely, the coordinates $\tilde x$ obey  some dynamics where the site $k$ is decoupled from the rest of the system, except
for some ``remainders'' of magnitude $L^{2-2k} \ll 1$ (see \eref{eq:HtildeOR}).
Part (i) will follow from this and technical considerations (see \pref{p:wasGronwall}).

Moreover, we will obtain (see \rref{rem:optimalpowers}) the scaling
\begin{equa}[eq:scalingTildeImI]
	\tilde I_i - I_i & \sim L^{1-2|k-i|} \qquad (i\neq k)~,\\
	\tilde I_k - I_k & \sim L^{-1}~,\\
	\tilde \phi_i - \phi_i & \sim L^{-2|k-i|} \qquad (i\neq k)~,\\
	\tilde \phi_k - \phi_k & \sim L^{-2}~.
\end{equa}
In particular, we have
\begin{equs}
I_1 = \tilde I_1 + P_1~,
\end{equs}
where $P_1$ scales like $L^{3-2k}$. To prove Part (ii) of \tref{t:main},
we will use the decomposition
\begin{equs}[eq:dtHintro]
\dt H = - \gamma I_1^2 = -\gamma \tilde I_1^2 - \gamma P_1^2 -2\gamma \tilde I_1 P_1~.
\end{equs}
The key role will be played by the second term in the right-hand side (see \sref{sec:dissipnotzero}),
which is of order $L^{6-4k}$ and will give rise to the dissipation
rate in \eref{e:dissipation}. The third term has no sign, and we will
show in \sref{sec:partii} that it has negligible effect thanks to its oscillatory nature.
It is the comparison of the second and third contributions in \eref{eq:dtHintro}
that leads to the choice $T \sim  L^{2k-3}$ (see \rref{rem:choiceT}).

In addition to being the central ingredient of our proof of \tref{t:main},
the construction of the new coordinates $\tilde x$ leads to some very
interesting observations, which we illustrate here without proof. 

First, we observe numerically (see \sref{ssec:numerics}) that the system quickly reaches a
quasi-stationary state, where the actions $I_i$ oscillate with frequency of order $L$
and small amplitude around some fixed value $\langle I_k \rangle \approx L$ for $I_k$, 
and around zero for $I_i$, $i\neq k$. At the same time, $\phi_k$ rotates rapidly, 
while the angles $\phi_i$, $i\neq k$ oscillate with small amplitude around an equilibrium position
of the interaction potentials.
More precisely, the amplitude of the oscillations of the actions appears to scale like
\begin{equs}[eq:scalingImI]
I_i &\sim   L^{1-2|k-i|} \qquad (i\neq k)~,\\
	I_k - \langle I_k\rangle  & \sim L^{-1}~.
\end{equs}

This can be understood as follows. By construction, the action $\tilde I_k$ is
approximately constant, which with \eref{eq:scalingTildeImI} explains 
\eref{eq:scalingImI} for site $k$.
To explain \eref{eq:scalingImI} for $i<k$, we observe that the system
$(\tilde \phi_1, \dots, \tilde \phi_{k-1}, \tilde I_1, \dots, \tilde I_{k-1})$
has, up to small corrections, the dynamics of a chain of $k-1$ rotators
with dissipation on the first one, and initial energy of order 1.
As there is no decoupling phenomenon among these $k-1$ ``slow''
rotators, this subsystem rapidly dissipates most of its energy and
approaches some rest position
where $\tilde I_i \approx 0$ for all $i< k$, and where all the $\tilde \phi_i$,
$i< k$, are close to a local equilibrium of the interaction potentials. This and \eref{eq:scalingTildeImI}
explain \eref{eq:scalingImI} for $i<k$.

Explaining why the same holds for $i>k$ (in case $k<n$) is less obvious. Since there is no direct dissipation on the right of rotator $k$, such a symmetric decay of the amplitude of oscillation might seem unexpected. First, the symmetry in \eref{eq:scalingTildeImI} follows from the construction of the variables $\widetilde \phi_i$ and $\widetilde I_i$, and does not rely on dissipation. The fact that \eref{eq:scalingTildeImI} implies \eref{eq:scalingImI} also for $i>k$ can be understood as follows.
While the variables $(\tilde I_k, \tilde \phi_k)$ are decoupled from the rest of
the system up to terms of order $L^{2-2k}$, the two subsystems
$(\tilde \phi_1, \dots, \tilde \phi_{k-1}, \tilde I_1, \dots, \tilde I_{k-1})$
and
$(\tilde \phi_{k+1}, \dots, \tilde \phi_{n}, \tilde I_{k+1}, \dots, \tilde I_{n})$
still interact through some terms of order $L^{-2}$ (see \rref{rem:leftrightcoupled}),
which is much larger than $L^{2-2k}$ as soon as $k\geq 3$. This seems to allow
the energy of the subsystem $(\tilde \phi_{k+1}, \dots, \tilde \phi_{n}, \tilde I_{k+1}, \dots, \tilde I_{n})$
to dissipate rather rapidly (\ie, much faster than any significant change in the energy
of rotator $k$ can be observed) so that the argument above also yields \eref{eq:scalingImI} 
for $i>k$.

Coming back to \eref{e:dissipation}, the following can be said. During
the initial transient phase, the instantaneous dissipation rate can be
much larger than $L^{6-4k}$. However, once the quasi-stationary state is
reached, then by \eref{eq:scalingImI} we really have
$\dt H = - \gamma I_1^2  \sim -L^{6-4k}$,
indicating that the power of $L$ in \eref{e:dissipation} is optimal
(in the sense that choosing initial conditions that are already in the
quasi-stationary state yields a dissipation rate which is no faster
than what we claim).

\subsection{Numerical illustration}\label{ssec:numerics}

We illustrate here the properties of the quasi-stationary state
for short chains, assuming that  $U_i(\phi_{i+1}-\phi_i) = -\cos(\phi_{i+1}-\phi_i)$ for all $i$.
We first consider $n=k=4$ and $\gamma=1$, with
$L=10$ and $L=100$. For all numerical simulations, we choose
the initial condition
$x_0 \in \BB_{L, \rho}$ such that $I_i(0) = 0$ for $i\neq k$, 
$I_k(0) = L$, and $\phi_i(0) = 0$ for all $i$.
In \fref{fig:fig0}, we depict the maximum of $|I_i|$
over intervals of length $2\pi /L$ (corresponding to periods of $\phi_4$).

\begin{figure}[htb]
\begin{center}
	\includegraphics[width=\textwidth, trim={0.3cm, 1cm, 0.35cm, 0}]{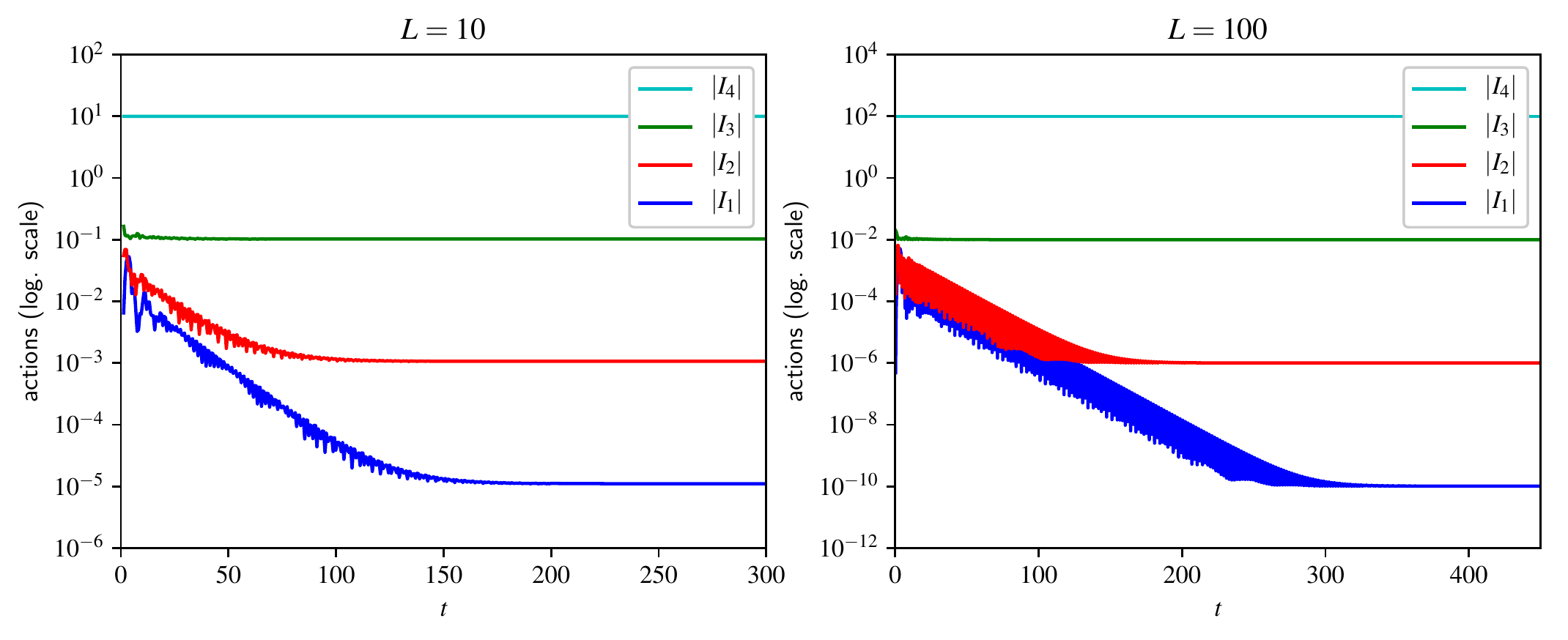}
\end{center}
  \caption{Maximum of $|I_i|$ over intervals of length $2\pi/L$ for $n=k=4$ and 
    $\gamma=1$, with $L=10$ (left) and $L=100$ (right).
}\label{fig:fig0}
\end{figure}

The transient phase and the quasi-stationary state are clearly visible on \fref{fig:fig0}.
Observe in particular that the  $I_i$
``equilibrate'' one after the other, with $I_1$ needing the longest
time.
Note that the vertical axis is $\log|I_i|$ and that the observed values
in the quasi-stationary state scale like $|I_{i}|\sim L^{1-2(n-i)}$, 
which is \eref{eq:scalingImI}.
The equidistance of the lines reflects the relation  $ \frac{|I_{i}|}{|I_{i+1}|}\sim \frac{1}{L^2}$.
The agreement here is perfect because for cosine potentials the numerical prefactors in
\eref{eq:scalingImI} happen to be 1 (see \sref{sec:asymptoticeqs}), so that the $I_i$, $i\neq k$,
are expected to oscillate with amplitude $(1+\OOs(L^{-1}))L^{1-2(n-i)}$.

\fref{fig:fig2} illustrates the phases of the (appropriately rescaled) oscillations 
in the quasi-stationary state in the same situation as above ($k=n=4$).
The observations corroborate again \eref{eq:scalingImI}.

\begin{figure}[htb]
\begin{center}
	\includegraphics[width=0.85\textwidth, trim={0, 1cm, 0, 0}]{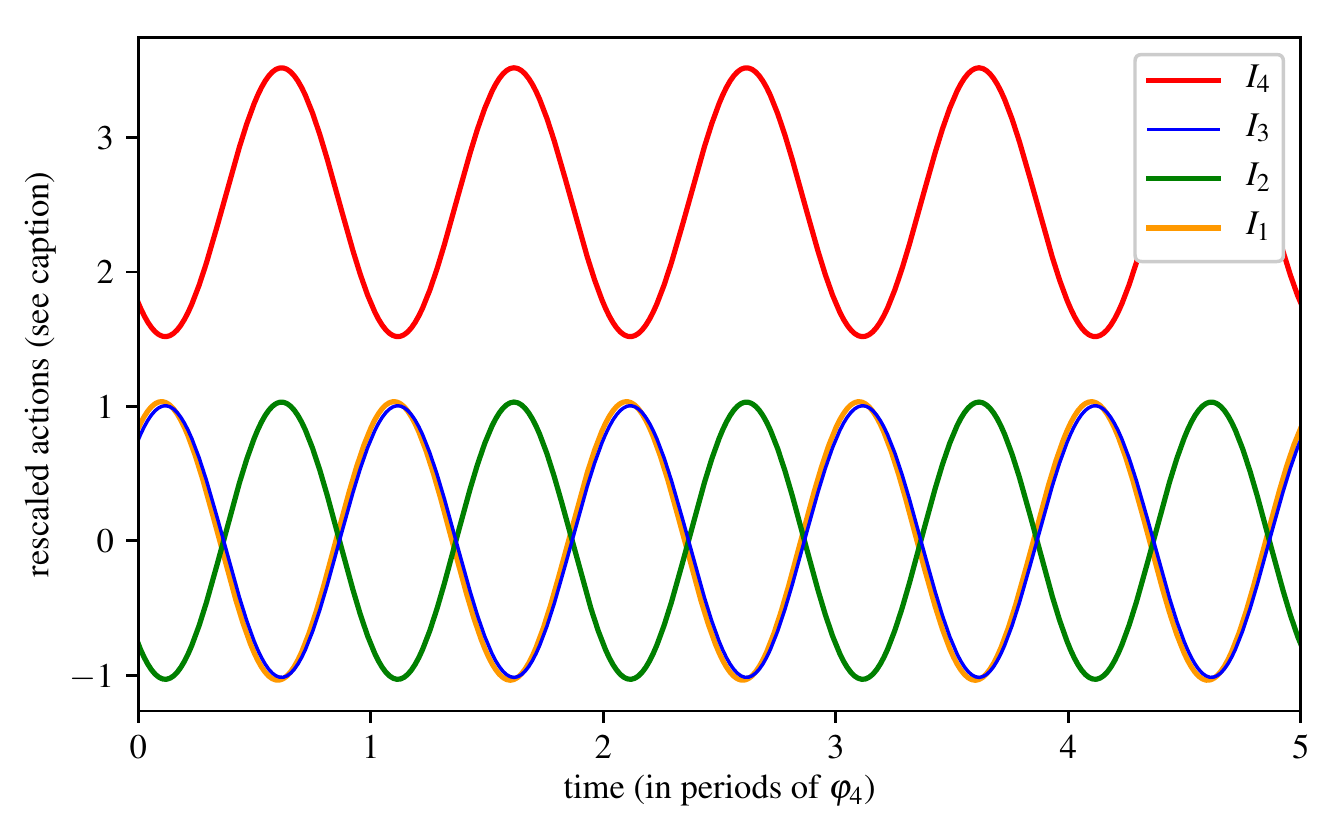}
\end{center}
  \caption{Oscillations in the quasi-stationary state $n=k=4$, $\gamma= 1$ and $L=10$.
 The amplitudes of $I_i$ are
    rescaled by the predicted factors $L^{2|4-i|-1}$, except for
  $I_4$ (the fast rotator), which is displayed as 
    $(I_4-\langle I_4 \rangle)\cdot L$ and shifted upward for better visibility.}
\label{fig:fig2}
\end{figure}

In \fref{fig:fig1}, we show the results for the case $n=6$ and
$k=4$. We observe that the amplitudes in the quasi-stationary
state depend
only on the absolute value of $i-k$, and are compatible with \eref{eq:scalingImI}.
\begin{figure}[htb]
  \begin{center}\includegraphics[width=0.85\textwidth, trim={0, 1cm, 0, 0}]{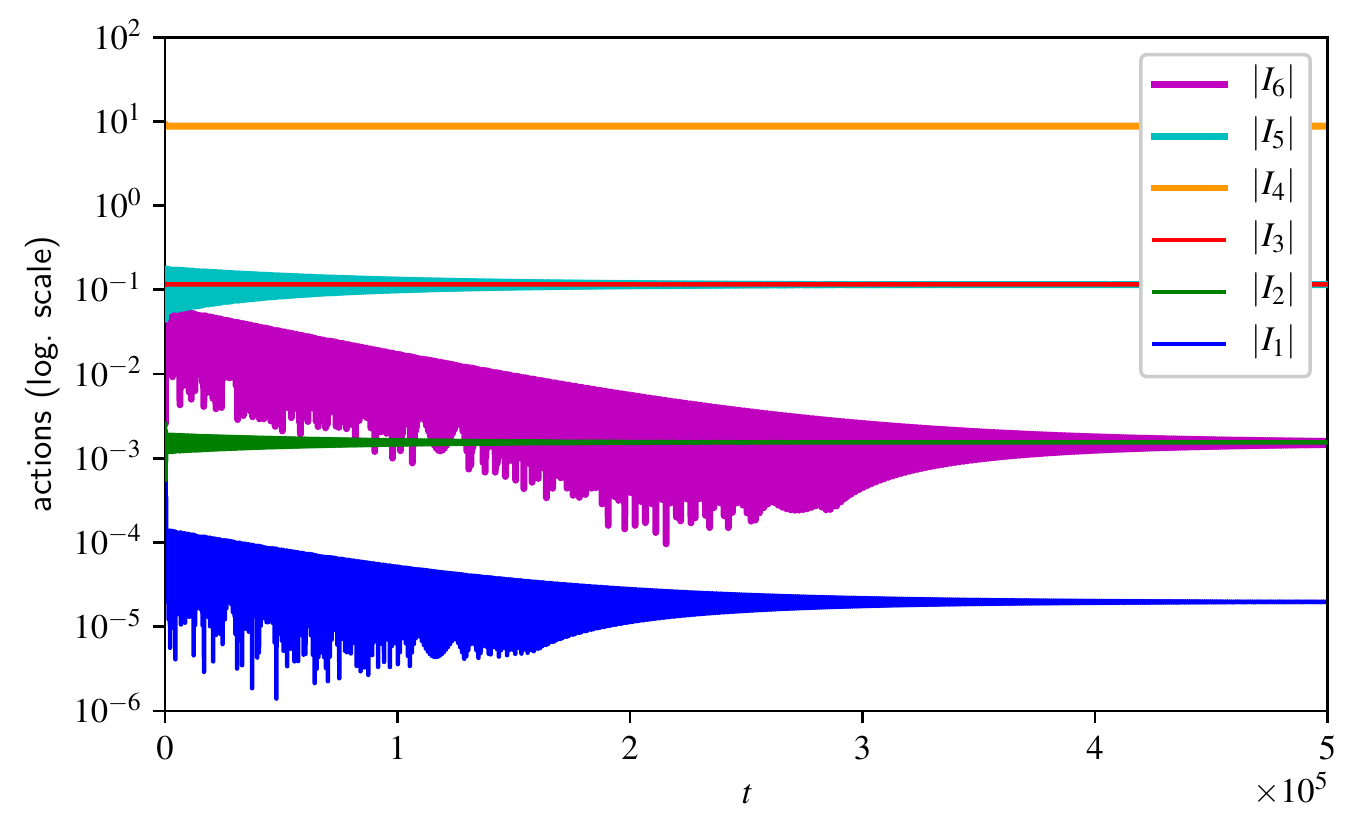}
\end{center}
  \caption{Maximum of $|I_i|$ over intervals of length $2\pi/L$ for $n=6$, $k=4$, $\gamma =2 $ and $L=9$.
}\label{fig:fig1}

\end{figure}

\section{Notation and tools}\label{sec:Notationstools}

We will consider analytic functions on a complex neighborhood of the phase space $\Omega$.

\begin{definition}\label{def:drs}
For any $L, r, \sigma > 0$, we define a complex domain
(in $\complex^{2n}$) by
\begin{equa}
  \DD_{L,r,\sigma} =\{
(\bI,\bphi) \in\complex^{2n}~: ~ |I_i-L\delta_{i,k}| < rL~, |\Im \phi_i| <\sigma, i=1, \dots, n\}~,
\end{equa}
and a norm on analytic, $\bphi$-periodic functions
\begin{equa}\label{e:normrs}
 \norm{f}{L,\rs}=\sup_{(\bI,\bphi) \in \DD_{L,\rs}} |f(\bI,\bphi)|~.
\end{equa}
We say that a $\bphi$-periodic function  $f$ is {\em admissible}
if
there exist $L_0, r, \sigma > 0$ such that for all $L\geq L_0$, $f$ is analytic on
$\DD_{L,r,\sigma}$ and $ \norm{f}{L,\rs}<\infty$.
\end{definition}

By assumption, $f\x 0$ is admissible
(we assume throughout that Assumptions~\ref{d:22} and \ref{d:23}
are in force). In fact, $ \norm{f\x 0}{L,\rs}$ is finite for every $r,\sigma$, and
independent of $L$.

\begin{remark}\label{rem:distsets}

We make the distinction between $\DD_{L, \rs}$ and $\BB_{L, \rho}$ clear:
\begin{itemize}
	\item The set $\DD_{L, r, \sigma}$ is complex, and the actions $I_i$, $i\neq k$ are allowed to grow with $L$. We will mostly work in terms of $\DD_{L, r, \sigma}$ when discussing analytic properties of  functions. In particular we need this scaling in order
to obtain good bounds on the derivatives with respect to $I_i$,
using Cauchy's theorem (see \lref{lem:poissonf1f2}).
\item The set $\BB_{L, \rho}$ is real, and the actions $I_i$, $i\neq k$ are absolutely bounded by $\rho$, independently of $L$. This will be more convenient when working with the orbits of the system, since as stated in \tref{t:main} (i), they remain for long times in such sets.
\end{itemize}

\end{remark}

\begin{remark}\label{rem:inclusionBBDD}
For all $\rho, r, \sigma > 0$, we have $\BB_{L, \rho} \subset \DD_{L,r,\sigma}$ as soon as $L \geq \rho/r$.\end{remark}

\subsection{Fourier series and resonance}

Given an admissible function $f$, we write
its Fourier series with respect to $\bphi$ as
\begin{equ}[eq:fgeneric]
 	f(\bI, \bphi)=\sum_{\bmm\in\NN}f_\bmm(\bI)e^{\i \mphi},
 \end{equ}
where $\NN \subset \integer^n$. The Fourier series above
is well-defined in $\DD_{L,r,\sigma}$ for $r, \sigma$ small enough and all $L$ large enough.

\begin{definition}\label{d:norms}Given a function $f$ and its
Fourier series \eref{eq:fgeneric}, we define
\begin{itemize}
	\item[] $\supp(\bmm) = \{i: \mu_i \neq 0\}$,
   \item[] $|\bmm| =\max\{|\mu_i|, i=1,\dots,n\}$,
    \item[]$|\NN| =\sup\{|\bmm|~: ~\bmm\in\NN\}$.
\end{itemize}
\end{definition}

\subsection{Sets of interactions and the non-resonance condition}

We will use the following notation:
\begin{definition}
We decompose any $\NN \subset \integer^n $ as a disjoint union $\NN=\NN^\NR\cup\NN^\R$:
\begin{equa}[e:splitting]
  \NN^{\NR}&=\big\{ \bmm \in \NN ~:~ \mu_k \ne 0\big\}~,
 \quad  \NN^{\R}=\big\{ \bmm \in \NN ~:~ \mu_k =  0\big\}~,
\end{equa}
where $\NR$ stands for {\em non-resonant} interactions, and
$\R$ stands for {\em resonant} interactions.
Given a function $f$ of the form \eref{eq:fgeneric},
we decompose correspondingly
\begin{equ}\label{e:RNR}
f =  f^\R+f^\NR~,
\end{equ}
where
\begin{equa}\label{e:RNR2}
f^{\R}=\sum_{\bmm\in\NN^\R}f_\bmm(\bI)
  e^{\i \mphi}~,
\quad f^{\NR}=\sum_{\bmm\in\NN^\NR}f_\bmm(\bI)
  e^{\i \mphi}~.
\end{equa}
\end{definition}

For example, we have the decomposition $f\x{0} = f\x{0, \R} + f\x{0, \NR}$, where
\begin{equs}[eq:decompF0]
f\x{0, \R} &= \sum_{i\in \{1, \dots, n-1\}\setminus \{k-1, k\}}U_i(\phi_{i+1}-\phi_i)~,\\
f\x{0, \NR} &=  \sum_{i \in \{1, \dots, n-1\}\cap\{k-1, k\} }U_i(\phi_{i+1}-\phi_i)~.
\end{equs}

Note that the non-resonant part of the interaction includes all terms in which any rotator couples to rotator $k$, and the resonant parts include any terms with no interaction with rotator $k$.

\begin{remark}
	This notion of {\em resonant} and {\em non-resonant} is justified as follows.
When $I_k$ is very large and the other actions are small, we have the following approximate dynamics for short times:
\begin{equs}
\phi_k(t) & \approx  \phi_k(0) + t I_k~,\\
\phi_i(t) &	\approx \phi_i(0)~.
\end{equs}
With respect to this dynamics, $e^{\i \bmm \cdot \bphi(t)}$ essentially oscillates with frequency $I_k \mu_k$ if $\mu_k \neq 0$, and is essentially constant if $\mu_k = 0$. Taking the average over one period of length $2\pi / I_k$, the non-resonant terms cancel out, while the resonant ones remain unchanged.
Thus, one can say that $f^\NR$ is the {\em oscillatory} part of $f$, while $f^\R$ is the {\em average} part of $f$.
\end{remark}

\begin{lemma}\label{lem:lhalf}  Let $\bmm \in \integer^n $ be non-resonant (\ie, $\mu_k \neq 0$),
and assume $r<{1}/({2n |\bmm|})$. Then
for any $\IP\in\DD_{L,\rs}$ one has
\begin{equ}\Label{e:NR}
  |\IM|> \frac{L}{2}~.
\end{equ}
\end{lemma}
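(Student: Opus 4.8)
The plan is to bound $|\bI\cdot\bmm|$ from below on the complex domain $\DD_{L,r,\sigma}$ by isolating the contribution of the $k$-th component, which is forced to be close to $L$, and controlling all the other components, which are of size $O(rL)$. First I would write $\bI\cdot\bmm = \mu_k I_k + \sum_{i\neq k}\mu_i I_i$ and use the triangle inequality in the form $|\bI\cdot\bmm|\geq |\mu_k|\,|I_k| - \sum_{i\neq k}|\mu_i|\,|I_i|$. Since $\mu_k\neq 0$ and $\mu_k\in\integer$, we have $|\mu_k|\geq 1$, so $|\mu_k|\,|I_k|\geq |I_k|$.

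Next I would use the defining inequalities of $\DD_{L,r,\sigma}$: for the $k$-th action, $|I_k - L| < rL$, hence $|I_k| > L - rL = (1-r)L$; and for $i\neq k$, $|I_i - 0| < rL$, hence $|I_i| < rL$. Plugging these in,
\begin{equs}
|\bI\cdot\bmm| > (1-r)L - \sum_{i\neq k}|\mu_i|\, rL \geq (1-r)L - (n-1)|\bmm|\, rL \geq L\bigl(1 - r - n|\bmm| r\bigr)~,
\end{equs}
where I bounded each $|\mu_i|\leq |\bmm|$ and the number of indices $i\neq k$ by $n-1 \leq n$. Now the hypothesis $r < 1/(2n|\bmm|)$ gives $n|\bmm|r < 1/2$, and since $|\bmm|\geq 1$ and $n\geq 1$ we also have $r < 1/(2n|\bmm|)\leq 1/2$, so $1 - r - n|\bmm|r > 1 - 1/2 - \text{(something} < 1/2\text{)}$; to make this clean I would rather note $r + n|\bmm| r \leq 2 n |\bmm| r < 1$... but we need the sharper $< 1/2$. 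In fact $r \le n|\bmm| r$ since $n|\bmm|\geq 1$, so $r + n|\bmm|r \leq 2 n|\bmm| r < 2\cdot\tfrac12 = 1$, giving only $|\bI\cdot\bmm| > 0$; to reach $L/2$ I instead bound $\sum_{i\neq k}|\mu_i| \leq n|\bmm|$ directly (absorbing the stray $-rL$ term into the count, i.e. using $(1-r)L - (n-1)|\bmm| rL \geq L - n|\bmm| r L > L - L/2 = L/2$, valid since $(1-r)L \geq L - n|\bmm| r L + (\text{nonneg})$ is false in general — so the correct route is to note $|I_k|>(1-r)L \ge L - rL$ and $1 \le n|\bmm|$, hence $rL \le n|\bmm| rL$ can be merged to give $(1-r)L - (n-1)|\bmm|rL \ge L - n|\bmm| r L > L - L/2$). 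Either way the arithmetic closes immediately.

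I do not anticipate any real obstacle here; this is a direct estimate and the only point requiring minimal care is the bookkeeping of the integer coefficients $|\mu_i|\leq|\bmm|$ and the count of indices, so that the constant $1/(2n|\bmm|)$ in the hypothesis is exactly what is needed to push the bound past $L/2$. The result is purely algebraic and uses no analyticity beyond the fact that points of $\DD_{L,r,\sigma}$ satisfy the stated modulus constraints on the actions.
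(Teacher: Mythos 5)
Your proof is correct and follows essentially the same route as the paper: isolate $\mu_k I_k$, lower-bound $|I_k|$ by $(1-r)L$, upper-bound $\sum_{i\neq k}|\mu_i I_i|$ by $(n-1)|\bmm|rL$, and then use $|\bmm|\ge 1$ to absorb the stray $rL$ term into the count so that $(1-r)L-(n-1)|\bmm|rL \ge L(1-n|\bmm|r) > L/2$. Your first attempt (ending at $L(1-r-n|\bmm|r)$) indeed does not close under the given hypothesis, but the self-correction you land on is exactly the paper's estimate, so the final argument is sound.
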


\begin{proof} This is a consequence of the following easy computation:
  \begin{equs}
    |\IM| &\ge  |I_k \mu_k| - |\sum_{i\neq k} I_i \mu_i| \ge |\mu_k|L(1-r) - |\bmm| (n-1)rL\\
&\ge L(1-nr|\bmm|)~.
  \end{equs}
\end{proof}

Conversely, note that if $\bmm$ is resonant, $\bI \cdot \bmm$ may vanish
on any $\DD_{L,\rs}$. That distinction will be crucial in the sequel, and we will need to check
that all factors of $\bI \cdot \bmm$ appearing in the denominators of the functions we construct have $\bmm$ non-resonant.

\subsection{Lie series}\label{sec:lieseries}
We
will eliminate the non-resonant terms by a Lie
transformation, writing $H'=H \circ \Phi$, where $\Phi$ is the time-1 flow generated by
the Hamiltonian vector field $X_\chi$ of a well-chosen Hamiltonian function $\chi$.

To fix the notation,
we use here the terminology of P\"oschel \cite{Poeschel_1993}.
He defines the canonical transformation $ \Phi$ as
\begin{equ}[eq:defexpo]
G \circ \Phi = \sum_{s=0}^{\infty} \frac{1}{s!} \ad^s_{\chi} G\ ,
\end{equ}
with
\begin{equ}
\ad_{\chi}^0 G = G \ ,\ \  \ad_{\chi}^s G= \{ \ad_{\chi}^{s-1} G, \chi \}\ .
\end{equ}
The Poisson bracket $\{\cdot,\cdot\}$ is defined as
\begin{equ}\Label{e:poisson}
\{ F, G\} = \sum_{i=1}^n \left( \frac{ \partial F}{\partial \phi_i}\frac{ \partial G}{\partial I_i} -  \frac{ \partial F}{\partial I_i}\frac{ \partial G}{\partial \phi_i} \right)\ .
\end{equ}
With a slight abuse of notation, we will write
\begin{equs}
	G\circ  \Phi\equiv e^{\chi} G~,
\end{equs}
where the right-hand side really means $e^{\ad_{\chi}}G$ and is defined by \eref{eq:defexpo}.
The transformation $\Phi$ is canonical, and its inverse is given by $G \circ \Phi^{-1} = e^{-\chi} G$. 

The Hamiltonian evolution of some function $F$  with respect to the Hamiltonian $H$ is given by the Poisson bracket $\{ F, H\}$.
The evolution of the rotators in our model consists of two pieces: a Hamiltonian piece which we can express in terms of the Poisson bracket and the dissipative term that affects the first rotator. More explicitly, we have
\begin{equa}
\dt{I}_{i} = \{ I_{i}, H\} - \gamma I_1 \delta_{1,i}~, \quad   \dt{\phi}_{i} = \{ \phi_{i}, H\}\ .
\end{equa}

We will transform the Hamiltonian part of the equations with the aid of the Lie transform method to almost completely decouple the $k^{\mathrm {th}}$ rotator from the remainder of the system. We will then have to examine carefully the effects of these transforms on the dissipative term.

\subsection{Eliminating non-resonant interactions}\label{subs:eliminateNR}

We now explain in detail the main iteration of our process in an abstract setting.
Assume we have a Hamiltonian of the form
\begin{equs}
H = h\x0 + f + g= h\x0 + f^\NR + f^\R  + g~,
\end{equs}
where the decomposition \eref{eq:fgeneric} of $f$ contains {\em finitely many} terms,
and $g$ is any function that we cannot, or do not need to remove.
We would like to remove the non-resonant interactions  $f^\NR$
from the
new Hamiltonian by changing coordinates as in \sref{sec:lieseries},
with some well-chosen $\chi$.

The transformation $\Phi$ transforms $H$ into a new Hamiltonian $H'$ given by
\begin{equa}[eq:Hprime]
  H' &=e^{\chi }H =h\x0+f^\NR + f^\R +g +
\{h\x0 ,\chi \} + \{f + g ,\chi \} \\
& \qquad + \halffact
\{\{H ,\chi \},\chi \}+ \thirdfact
\{\{\{H ,\chi \},\chi \},\chi \}+\dots~\\
& = h\x0+f^\NR + \{h\x0 ,\chi \} + f^\R +g \\
& \qquad + \sum_{\ell=2}^\infty \frac 1 {\ell!}\ad_{\chi}^\ell h\x0 + \sum_{\ell=1}^\infty \frac 1 {\ell!}\ad_{\chi}^\ell (f + g)~.
\end{equa}
In order to remove the $f^\NR$ from \eref{eq:Hprime}, the idea is to choose $\chi$ so that
 \begin{equ}[eq:aeliminerNR]
 	\poiss{h\x0}{\chi}=-f^\NR~.
 \end{equ}
Such a $\chi$ is constructed as follows.
\begin{definition}
	For any function $f$ as in \eref{eq:fgeneric},
with $\NN$ finite, we let
\begin{equs}[eq:defQ]
Q f \equiv -\i\sum_{\bmm\in\NN^{\NR}}
  \frac{f_\bmm(\bI)}{\IM}e^{\i \,\mphi}~.
\end{equs}
\end{definition}
Since $\NN^\NR$ is finite, we have by \lref{lem:lhalf}
that the denominators $\bI \cdot \bmm$ appearing here are well defined and larger
than $L/2$ in absolute value on $\DD_{L,\rs}$ for some appropriate $r, \sigma$,
and all large enough $L$. Thus, $Qf$ is well defined on that domain.
Moreover, since these denominators scale like $L$,
we reasonably expect $Qf$ to be ``small'' when $L$ is large.
This will be made precise later. We observe also that if $f$ is real (\ie, $f_{-\bmm} = \bar f_{\bmm}$)
then so is $Qf$.
Finally, by construction,
\begin{equ}[eq:cancelNR]
	\poiss{h\x0}{Qf}=  -\sum_{i=1}^n I_i \partial_{\phi_i} Qf=- \sum_{\bmm\in\NN^{\NR}}f_\bmm(\bI)
  e^{\i \mphi} = - f^{\NR}~,
\end{equ}
so that choosing
\begin{equ}
\chi = Qf
\end{equ}
guarantees that \eref{eq:aeliminerNR} holds.
As a consequence, \eref{eq:Hprime} becomes
\begin{equa}[eq:iterationstep]
  H' &= h\x0+ f^\R +g + \sum_{\ell=2}^\infty \frac 1 {\ell!}\ad_{\chi}^\ell h\x0 + \sum_{\ell=1}^\infty \frac 1 {\ell!}\ad_{\chi}^\ell (f + g)\\
&= h\x0+ f^\R + g + \sum_{\ell=1}^\infty \frac 1 {\ell!} \ad_{\chi}^\ell\left( f + g- \frac {f^\NR}{\ell+1}\right)~.
\end{equa}

Thus, $f^\NR$ has indeed been removed from $H'$ (while $f^\R$  remains untouched).
The price to pay is the
appearance of some new interactions in the form of the infinite series above. The idea is then to eliminate part of this new term by another canonical transformation, and to iterate this procedure as many times as needed.

We address now the natural question of whether this infinite series converges, and whether it is actually smaller than the interaction $f^\NR$ that we have removed.

\subsection{Orders of magnitude}

\begin{definition}\label{def:OOd}
Let $f$ be admissible, and let  $s\in \real$. We say that
  \begin{equ}\label{e:oo}
    f=\OOd(L^{s})
  \end{equ}
if there exist constants $r, \sigma, L_0, C > 0$ such that
for all $L\geq L_0$,
\begin{equ}\label{e:oo2}
  \norm{f}{L,\rs} \le C L^s~.
\end{equ}
\end{definition}

The following properties are proved in Appendix~\ref{sec:thebounds}, and
will often be used without reference.
\begin{lemma}\label{lem:propsOO}
Let $f = \OOd(L^s)$ and $g = \OOd(L^z)$. Then,
\begin{enumerate}[label=(\alph*)]
	\item $f+g = \OOd(L^{\max(s,z)})$,
	\item $fg = \OOd(L^{s+z})$,
	\item $\partial_{\phi_i} f = \OOd(L^{s})$,
	\item $\partial_{I_i} f = \OOd(L^{s-1})$,
	\item $\{f, g\} = \OOd(L^{s+z-1})$,
	\item $\ad_g^\ell f = \OOd(L^{s+\ell(z-1)})$,
	\item  if $(a_\ell)_{\ell \geq 0}$ is a bounded sequence, then for all $0\leq \ell_0 \leq \ell_1 \leq \infty$,
\begin{equs}
	\sum_{\ell=\ell_0}^{\ell_1} \frac{a_\ell}{\ell!} \ad_{g}^\ell f = \OOd(L^{s+\ell_0(z-1)}),
\end{equs}
\item if $f$ has the form \eref{eq:fgeneric} and $\tilde \NN \subset \NN$, then
\begin{equ}
 	\sum_{\bmm\in\tilde \NN}f_\bmm(\bI)e^{\i \mphi} = \OOd(L^s),
 \end{equ}
\item if in addition $f$ contains {\em finitely} many Fourier modes, then
\begin{equs}
	 Qf = \OOd(L^{s-1})~.
\end{equs}
\end{enumerate}
\end{lemma}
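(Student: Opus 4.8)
The plan is to verify the nine items in the order listed, each using the previous ones; only item (g) requires real work. Items (a) and (b) are immediate: given admissibility data $(r,\sigma,L_0,C_f)$ for $f$ and $(r',\sigma',L_0',C_g)$ for $g$, restrict to $\DD_{L,r'',\sigma''}$ with $r''=\min(r,r')$, $\sigma''=\min(\sigma,\sigma')$, $L\ge\max(L_0,L_0',1)$; there $\norm{\cdot}{L,r'',\sigma''}$ is a supremum norm, so $\norm{f+g}{L,r'',\sigma''}\le C_fL^s+C_gL^z\le(C_f+C_g)L^{\max(s,z)}$ and $\norm{fg}{L,r'',\sigma''}\le C_fC_gL^{s+z}$. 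Items (c) and (d) are Cauchy estimates. For (c): at a point of $\DD_{L,r,\sigma/2}$ the circle of radius $\sigma/2$ in the variable $\phi_i$ lies in $\DD_{L,\rs}$ (using $\bphi$-periodicity), so $|\partial_{\phi_i}f|\le(2/\sigma)\norm{f}{L,\rs}$. For (d) the essential observation is that the $I_i$-disc in $\DD_{L,\rs}$ has radius $rL$, which \emph{grows} with $L$: at a point of $\DD_{L,r/2,\sigma}$ the disc of radius $rL/2$ about $I_i$ lies in $\DD_{L,\rs}$, so $|\partial_{I_i}f|\le(2/(rL))\norm{f}{L,\rs}\le(2C_f/r)L^{s-1}$. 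This gain of one power of $L$ per $I$-derivative is the whole point of the scaling in $\DD_{L,\rs}$, and it is what produces the $L$-exponents in (e), (i) and (g).

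With (a)--(d) in hand, (e) follows by expanding $\poiss{f}{g}=\sum_{i=1}^n(\partial_{\phi_i}f\,\partial_{I_i}g-\partial_{I_i}f\,\partial_{\phi_i}g)$: each product is $\OOd(L^s)\OOd(L^{z-1})=\OOd(L^{s+z-1})$ by (c), (d), (b), and the finite sum is $\OOd(L^{s+z-1})$ by (a). Item (f) is then an induction on $\ell$, using $\ad_g^0f=f$ and $\ad_g^\ell f=\poiss{\ad_g^{\ell-1}f}{g}$. For (h), I would bound the Fourier coefficients by a contour shift: $f_\bmm(\bI)=(2\pi)^{-n}\int_{\torus^n}f(\bI,\bphi)e^{-\i\mphi}\,\d\bphi$ for fixed complex $\bI$, and shifting each $\phi_j$-contour by $\mp\sigma'$ according to the sign of $\mu_j$ (any $\sigma'<\sigma$) gives $|f_\bmm(\bI)|\le\norm{f}{L,\rs}e^{-\sigma'(|\mu_1|+\dots+|\mu_n|)}$ throughout the complex action domain; hence on $\DD_{L,r,\sigma''}$ with $\sigma''<\sigma'$ one has $\bigl|\sum_{\bmm\in\tilde\NN}f_\bmm(\bI)e^{\i\mphi}\bigr|\le\norm{f}{L,\rs}\sum_{\bmm\in\integer^n}e^{-(\sigma'-\sigma'')(|\mu_1|+\dots+|\mu_n|)}$, a convergent sum, so every sub-series is $\OOd(L^s)$ (in particular $f^\R,f^\NR=\OOd(L^s)$). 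Item (i) combines (h) with \lref{lem:lhalf}: since $\NN^\NR$ is finite, pick $r<1/(2n|\NN|)$ so that $|\IM|>L/2$ on $\DD_{L,\rs}$ for every $\bmm\in\NN^\NR$; dividing the finitely many terms $f_\bmm(\bI)e^{\i\mphi}$, each $\OOd(L^s)$, by a non-vanishing quantity larger than $L/2$ keeps $Qf$ analytic on that domain and gives $Qf=\OOd(L^{s-1})$.

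The delicate item is (g), since iterating (f) naively shrinks the domain at every step, leaving no common domain after infinitely many iterations. I would instead use the standard Lie-series interpolation. Fix once and for all a target domain $\DD_{L,r_0/4,\sigma_0/4}$ on which $f$ is bounded by $C_fL^s$ and (by Cauchy from $\DD_{L,r_0,\sigma_0}$, exactly as in (c)--(d)) $\partial_{\phi_i}g$, $\partial_{I_i}g$ are bounded by $(2C_g/\sigma_0)L^z$, $(2C_g/r_0)L^{z-1}$. For each fixed $\ell$, interpolate between $\DD_{L,r_0/2,\sigma_0/2}$ and $\DD_{L,r_0/4,\sigma_0/4}$ through the $\ell+1$ domains $\DD_{L,\rho_j,\tau_j}$ with $\rho_j=r_0/2-jr_0/(4\ell)$ and $\tau_j=\sigma_0/2-j\sigma_0/(4\ell)$; all of these lie inside $\DD_{L,r_0/2,\sigma_0/2}$, so the derivative bounds for the fixed $g$ hold on each of them. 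One Poisson-bracket Cauchy estimate per slice (gaps $r_0/(4\ell)$ in $I$ and $\sigma_0/(4\ell)$ in $\phi$) costs a multiplicative factor of order $\ell L^{z-1}$ — again thanks to the $L$-gain in the $I$-direction — so by induction on $j$ one gets $\norm{\ad_g^\ell f}{L,r_0/4,\sigma_0/4}\le C_f(c\ell)^\ell L^{s+\ell(z-1)}$ with $c$ depending only on $n,C_g,r_0,\sigma_0$.

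To finish (g), use $(c\ell)^\ell\le(ec)^\ell\ell!$ to turn the $1/\ell!$-weighted series into a geometric one: $\sum_{\ell=\ell_0}^{\ell_1}\frac{|a_\ell|}{\ell!}\norm{\ad_g^\ell f}{L,r_0/4,\sigma_0/4}\le\norm{a}{\infty}C_fL^s\sum_{\ell=\ell_0}^{\ell_1}(ecL^{z-1})^\ell$. When $\ell_1<\infty$ there is nothing more to check. When $\ell_1=\infty$ the tail converges exactly when $ecL^{z-1}<1$ for $L$ large, i.e. when $z<1$; this is the only regime in which (g) is invoked, because the generating functions to which it is applied (such as those in \eref{eq:iterationstep}) are of the form $Qf$ and hence, by (i), of strictly negative order in $L$. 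Then, for $L$ so large that $ecL^{z-1}\le1/2$, one has $\sum_{\ell\ge\ell_0}(ecL^{z-1})^\ell\le2(ecL^{z-1})^{\ell_0}$, the series converges uniformly (hence defines an analytic function) on $\DD_{L,r_0/4,\sigma_0/4}$, and it is $\OOd(L^{s+\ell_0(z-1)})$. The main obstacle is precisely this interpolation: choosing the chain of domains so that the $\ell$-fold iterated Cauchy estimate produces only the combinatorial factor $(c\ell)^\ell$, which is of a size that $1/\ell!$ can absorb, while keeping the fixed function $g$ and its derivatives under control uniformly along the chain.
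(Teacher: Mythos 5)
Your proof is correct and follows essentially the same route as the paper's Appendix~A: Cauchy estimates on $\DD_{L,r,\sigma}$ exploiting the $rL$-radius of the $I$-disc for the crucial $L^{-1}$ gain per $I$-derivative (the paper's Lemma~\ref{lem:poissonf1f2} and Corollary~\ref{cor:poissonfg}), a nested-domain interpolation for $\ad_g^\ell$ in which the bounds on $g$ and its first derivatives are held fixed on a large domain so that each step costs only $O(\ell L^{z-1})$ (Lemma~\ref{lem:adl}), the $\ell^\ell/\ell! \le e^\ell$ estimate reducing the Lie series to a geometric one (Proposition~\ref{prop:67}), Fourier-coefficient decay by contour shift for restrictions of the Fourier series (Lemma~\ref{lem:g10}), and Lemma~\ref{lem:lhalf} for the small divisors in $Q$ (Proposition~\ref{prop:chibound}). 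Your explicit remark that (g) with $\ell_1=\infty$ needs $z<1$ is a useful clarification of what the paper leaves implicit in the ``$L$ sufficiently large'' hypothesis of Proposition~\ref{prop:67}.
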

\begin{proof}
	(a) and (b) are obvious. (c), (d) and (e) follow from Cauchy's theorem (see \lref{lem:poissonf1f2} and \cref{cor:poissonfg}).
(f) follows from iterating (e) and is proved in  \lref{lem:adl}. (g) is slightly more involved, as when $\ell_1 = \infty$ one must make sure that the domain does not shrink too much (see \pref{prop:67}). A proof of (h) is provided in \lref{lem:g10}. Finally,
(i) is proved in \pref{prop:chibound}: the idea that in the definition of $Q$, each term is divided by $\bmm \cdot \bI$, which scales like $L$ in an appropriate domain by \lref{lem:lhalf}.
\end{proof}

Each operation in \lref{lem:propsOO} imposes further restrictions on the parameters $r, \sigma, L_0$ in \eref{e:oo2} (see Appendix~\ref{sec:thebounds} for the details). This will, however, not be a problem since only a {\em finite number} of such operations will be needed.

As $\phi_i$ is not strictly speaking an admissible function (since it is not $\bphi$-periodic on $\complex^{2n})$, we need a supplementary lemma, which basically says that $\phi_i$ behaves like an $\OOd(L^0)$.
\begin{lemma}\label{lem:nonanalyticphii}
	Let $g = \OOd(L^{z})$. Then
\begin{equs}
\poiss{\phi_i}{g} = \OOd(L^{z-1})~,
\end{equs}
and
for all $1\leq \ell_0 \leq \ell_1 \leq \infty$,
\begin{equs}
	\sum_{\ell=\ell_0}^{\ell_1} \frac{1}{\ell!} \ad_{g}^\ell \phi_i = \OOd(L^{\ell_0(z-1)})~.
\end{equs}
\end{lemma}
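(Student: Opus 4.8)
The plan is to reduce both statements to \lref{lem:propsOO} via the single observation that, although $\phi_i$ is not admissible, its Poisson bracket with any admissible function is. First I would note that
\begin{equs}
\poiss{\phi_i}{g} = \sum_{j=1}^n\Big(\partial_{\phi_j}\phi_i\,\partial_{I_j}g - \partial_{I_j}\phi_i\,\partial_{\phi_j}g\Big) = \partial_{I_i}g~,
\end{equs}
because $\partial_{\phi_j}\phi_i=\delta_{ij}$ and $\partial_{I_j}\phi_i=0$. The right-hand side is admissible, so \lref{lem:propsOO}(d) gives at once $\poiss{\phi_i}{g}=\OOd(L^{z-1})$, which is the first claim.

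For the second claim I would use that $\ad_g$ is linear, hence $\ad_g^\ell\phi_i=\ad_g^{\ell-1}(\ad_g\phi_i)=\ad_g^{\ell-1}(\partial_{I_i}g)$ for every $\ell\geq 1$; from this point on $\phi_i$ has disappeared and one works entirely with the admissible function $\partial_{I_i}g=\OOd(L^{z-1})$. Re-indexing with $m=\ell-1$,
\begin{equs}
\sum_{\ell=\ell_0}^{\ell_1}\frac1{\ell!}\,\ad_g^\ell\phi_i = \sum_{m=\ell_0-1}^{\ell_1-1}\frac{a_m}{m!}\,\ad_g^m\big(\partial_{I_i}g\big)~,\qquad a_m\equiv\frac1{m+1}~,
\end{equs}
and $(a_m)_{m\geq 0}$ is bounded. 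Since $\ell_0\geq 1$ we have $\ell_0-1\geq 0$, so \lref{lem:propsOO}(g) applies with base function $\partial_{I_i}g$ and starting index $\ell_0-1$, yielding $\OOd\big(L^{(z-1)+(\ell_0-1)(z-1)}\big)=\OOd(L^{\ell_0(z-1)})$. The endpoint $\ell_1=\infty$ is permitted there, so that case is covered as well.

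I do not expect any real obstacle here: the only delicate point is the non-admissibility of $\phi_i$, which is dispatched after a single application of $\ad_g$ by the identity $\poiss{\phi_i}{g}=\partial_{I_i}g$, and one must keep track of the index shift so as to land precisely on the hypotheses of \lref{lem:propsOO}(g) (in particular on $\ell_0-1\geq 0$). All remaining steps are direct invocations of \lref{lem:propsOO}.
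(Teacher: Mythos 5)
Your proof is correct and matches the paper's argument essentially verbatim: both use the identity $\poiss{\phi_i}{g}=\partial_{I_i}g$ to dispatch the first claim via Lemma~\ref{lem:propsOO}(d), and both reindex the Lie series by peeling off one $\ad_g$ so that it becomes a sum over the admissible function $\partial_{I_i}g$ with bounded coefficients $1/(\ell+1)$, to which Lemma~\ref{lem:propsOO}(g) applies. The only difference is cosmetic: you name the coefficient sequence $(a_m)$ explicitly, while the paper folds $1/(\ell+1)$ into the argument of $\ad_g^\ell$.
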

\begin{proof}
The first statement follows from the fact that $\poiss{\phi_i}{g} = \partial_{I_i} g$ and \lref{lem:propsOO} (d). As a consequence, we have
\begin{equs}
	\sum_{\ell=\ell_0}^{\ell_1} \frac{1}{\ell!} \ad_{g}^\ell  \phi_i = \sum_{\ell=\ell_0-1}^{\ell_1-1} \frac 1 {\ell!}\ad_{g}^{\ell} \frac{\partial_{I_i}g}{\ell+1}~,
\end{equs}
which by \lref{lem:propsOO} (g) proves the second statement.
\end{proof}

\section{Inductive construction}\label{sec:general}

In this section, we describe the successive steps in which several
canonical transformations eliminate the non-resonant terms.
Throughout, we fix a cutoff $N_*$.
We will choose $N_*$ big enough so that the error terms $R\x j$
below are negligible powers of $L$. (We will see that $N_*=k$ is good enough.)
In a first reading, the reader may ignore this cutoff and the
corresponding remainders $R\x j$
(formally taking $N_* = \infty$).

We do the first two iterations explicitly in order to gain some intuition, and
then describe the general step.

\subsection{The first canonical transformation}\Label{sec:step2}

We start with the Hamiltonian $H\x0=h\x0+f\x0$ of the form of \eref{e:nn},
and decompose $f\x0$ according to \eref{e:RNR} and \eref{e:RNR2}.
This leads to
\begin{equs}
H\x0 = h\x0 + f\x{0,\R} + f\x{0,\NR} ~,
\end{equs}
where the explicit expression of $f\x{0,\R}$ and $f\x{0,\NR}$ was given in \eref{eq:decompF0}.
Obviously,  we have $f\x 0 = \OOd(L^0)$.
Following \sref{subs:eliminateNR} (with $f = f\x 0$ and $g = 0$), we let
\begin{equ}
  \chi\x0=Qf\x0~.
\end{equ}
We have $\poiss{h\x0}{\chi\x 0}=-f\x{0,\NR}$, and by \lref{lem:propsOO} (h),
\begin{equs}[eq:chi0m1]
\chi \x 0 = \OOd(L^{-1})~.
\end{equs}

By \eref{eq:iterationstep}, we thus find that the non-resonant term $f\x{0,\NR}$ is removed, and that
\begin{equs}
  H\x1 & \equiv e^{\chi\x0 }H\x0  = \,h\x0+f\x{0,\R} +  \sum_{\ell=1}^\infty \frac 1 {\ell!} \ad_{\chi^{(0)}}^\ell\left( f\x0 - \frac {f\NNR}{\ell+1}\right)  \\
& =h\x0+f\x{0,\R}+f\x1 + \RR\x1~,
\end{equs}
where
\begin{equs}[e:fr1]
	f\x1 &= \sum_{\ell=1}^{N_*-1} \frac 1 {\ell!} \ad_{\chi^{(0)}}^{\ell} \left( f\x0 - \frac {f\NNR}{\ell+1}\right) = \OOd(L^{-2})~,\\
\RR\x1 & = \sum_{\ell=N_*}^{\infty } \frac 1 {\ell!} \ad_{\chi^{(0)}}^{\ell} \left( f\x0 - \frac {f\NNR}{\ell+1}\right) = \OOd(L^{-2N_*})~.
\end{equs}
The estimates $\OOd(L^{-2})$ and $\OOd(L^{-2N_*})$ here come from \lref{lem:propsOO} (g), \eref{eq:chi0m1}, and the fact that $f\x0 = \OOd(L^0)$ (note that by \lref{lem:propsOO} (h), also   $f\x{0, \NR} = \OOd(L^0)$).

We now pause to make a series of observations and comments.

First, we put only finitely many
terms in $f\x1$, and the rest of the infinite series goes into $R\x1$.
The reason for this is that at the next step, we want to remove the non-resonant part of
$f\x 1$ by making another canonical transformation given by $\chi \x 1 = Qf\x1$, and for
this we cannot have infinitely many terms in $f\x1$. As mentioned above, $N_*$ will be chosen so that the remainder $R\x 1$ is small enough.

Secondly, and this is the essential feature, the coupling between the site $k$ and its neighbors, which was in $f\x {0,\NR}$, has been removed. Actually, the only interactions now involving the site $k$ appear in $f\x 1$ and $\RR \x 1$, so one can say that the site $k$ has been {\em decoupled} up to order $L^{-2}$ from the rest of the chain.

Next, we make
\begin{observation}\Label{lem:support}
The function $f\x1$ depends only on the variables $(\phi_i, I_i)$ for
$i\in \{k-2,\dots, k+2\}$. (We should actually write $\{\max(1,k-2),\dots, \min(k+2, n)\}$. To avoid burdening the notation, we will often omit to mention that the dependence is obviously restricted to the sites $\{1, \dots, n\}$.)
\end{observation}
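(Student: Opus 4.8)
The plan is to track, step by step, which variables each Poisson bracket in the construction of $f\x1$ can involve. The starting point is that $f\x0$ decomposes into the two-body terms $U_i(\phi_{i+1}-\phi_i)$, and $f\x{0,\NR}$ consists precisely of those terms with $i\in\{k-1,k\}$ (when these indices lie in $\{1,\dots,n-1\}$), i.e.\ $U_{k-1}(\phi_k-\phi_{k-1})$ and $U_k(\phi_{k+1}-\phi_k)$. Hence $f\x{0,\NR}$ depends only on $(\phi_i,I_i)$ for $i\in\{k-1,k,k+1\}$. Since $\chi\x0 = Qf\x0 = -\i\sum_{\bmm\in\NN\NNR} f\x0_\bmm(\bI)/(\IM)\,e^{\i\mphi}$ and $\NN\NNR$ picks out exactly the Fourier modes of $f\x{0,\NR}$, the numerator $f\x0_\bmm(\bI)$ is constant and the only $\bI$-dependence comes through $\IM = I_{k-1}\mu_{k-1}+I_k\mu_k+I_{k+1}\mu_{k+1}$; thus $\chi\x0$ also depends only on $(\phi_i,I_i)$ for $i\in\{k-1,k,k+1\}$.

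The second step is a ``support does not spread too fast'' observation for the Poisson bracket: if $F$ depends only on sites in a set $S$ and $G$ only on sites in a set $S'$, then $\{F,G\}$ depends only on sites in $S\cup S'$, and moreover if $S\cap S'=\emptyset$ then $\{F,G\}=0$. This is immediate from the definition \eref{e:poisson}, since $\partial_{\phi_i}F$ and $\partial_{I_i}F$ vanish for $i\notin S$. Applying this inductively to $\ad_{\chi\x0}^{\ell}(f\x0 - f\NNR/(\ell+1))$, each successive bracket with $\chi\x0$ can enlarge the support by the support of $\chi\x0$, namely $\{k-1,k,k+1\}$. So after $\ell$ brackets the support is contained in the union of $\supp(f\x0)$-type contributions with $\{k-1,k,k+1\}$. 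At first glance this only gives $\{1,\dots,n\}$, because $f\x0$ itself spans all sites; the point is that terms of $f\x0$ whose support is disjoint from $\{k-1,k,k+1\}$ — i.e.\ the $U_i$ with $i\notin\{k-2,k-1,k,k+1\}$ — Poisson-commute with $\chi\x0$ and hence contribute nothing after at least one bracket. Thus for $\ell\geq 1$,
\begin{equs}
\ad_{\chi\x0}^{\ell}\Bigl(f\x0 - \tfrac{f\NNR}{\ell+1}\Bigr) = \ad_{\chi\x0}^{\ell}\Bigl(\,\sum_{i\in\{k-2,k-1,k,k+1\}} U_i(\phi_{i+1}-\phi_i) - \tfrac{f\NNR}{\ell+1}\Bigr),
\end{equs}
whose support is contained in $\{k-2,\dots,k+2\}$ (the term $U_{k-2}(\phi_{k-1}-\phi_{k-2})$ involves site $k-2$, and one bracket with $\chi\x0$ reaches site $k+1$; symmetrically $U_{k+1}$ reaches $k-1$, but no term can simultaneously reach both $k-2$ and $k+2$ after a single bracket, and further brackets with $\chi\x0\subset\{k-1,k,k+1\}$ add nothing new). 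Summing over $1\leq\ell\leq N_*-1$ gives the claim for $f\x1$.

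The main subtlety — more bookkeeping than genuine obstacle — is the boundary case: one must replace $\{k-2,\dots,k+2\}$ by $\{\max(1,k-2),\dots,\min(k+2,n)\}$, which the statement already flags, and check that the two-body terms $U_{k-1},U_k$ only exist when $k-1\geq 1$, resp.\ $k\leq n-1$, so the supports stated are the correct ones in all cases. I would package the ``$\supp\{F,G\}\subseteq \supp F\cup\supp G$, with equality to $0$ when disjoint'' fact as a one-line sublemma and then let the induction run; nothing here requires the quantitative $\OOd$ estimates, only the combinatorial structure of nearest-neighbour coupling.
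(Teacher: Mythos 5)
Your proof is correct and follows essentially the same route as the paper: identify the support of $\chi\x0$ as $\{k-1,k,k+1\}$, note that the two-body terms $U_{k-2}$ and $U_{k+1}$ extend the support of the first bracket to $\{k-2,\dots,k+2\}$, and observe that further brackets with $\chi\x0$ cannot enlarge it since $\supp\{F,G\}\subseteq\supp F\cup\supp G$. You are slightly more explicit than the paper in isolating the support sublemma and in noting that the $U_i$ with support disjoint from $\{k-1,k,k+1\}$ Poisson-commute with $\chi\x0$ and drop out for $\ell\geq1$, which fills in the paper's ``it is then easy to realize''.
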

This is seen as follows. By the structure of $f\x0$ as a sum of two-body interactions,
and by the definition of $\chi\x0$, we have that $\chi \x 0$ depends only on the sites
$k-1, k, k+1$. It is then easy to realize, using the structure of $f\x0$ again, that
\begin{equs}[eq:poisschi0f0]
\poiss{f\x0 - \frac {f\NNR}{\ell+1}}{\chi \x0}
\end{equs}
depends on the sites $k-2,\dots, k+2$. In fact, the dependence is extended
to the sites $k-2$ and $k+2$
due to the terms $U_{k-2}$ and $U_{k+1}$ in $f\x 0$:
the Poisson bracket $\poiss{U_{k-2}(\phi_{k-1}-\phi_{k-2})}{\chi \x 0}$
 couples the sites $k$ and $k-2$, and
the Poisson bracket $\poiss{U_{k+1}(\phi_{k+2}-\phi_{k+1})}{\chi \x 0}$
 couples the sites $k$ and $k+2$.
Moreover, starting with \eref{eq:poisschi0f0} and
taking more Poisson brackets with $\chi \x0$ does not extend the dependence
to more sites, which gives the observation above.

To summarize, at the end of this first canonical transformation,
we have replaced the non-resonant interactions
$f\x{0,\NR}$ with some smaller term $f\x 1 = \OOd(L^{-2})$ and a very
small remainder $R\x 1 = \OOd(L^{-2N_*})$. The site $k$ is then decoupled
up to order $L^{-2}$, but the potential $f\x 1$ makes it interact with
its next-to-nearest neighbors (whereas $H\x 0$ only featured nearest-neighbors
interactions).

\subsection{The second canonical transformation}

We first decompose
\begin{equs}
 H\x1 & = h\x0+f\x{0,\R}+f\x1 + \RR\x1 \\
&= h\x0+f\x{0,\R}+f\x{1,\NR}+f\x{1,\R} + \RR\x1~.
\end{equs}
We now want to remove $f\x{1,\NR}$. Following again the method of \sref{subs:eliminateNR}, this time with $f = f\x 1$ and $g = R\x 1 + f\x{0,\R}$, we define
\begin{equ}\Label{e:chi1}
  \chi\x1=  Q f\x1 ~,
\end{equ}
so that $\poiss{h\x0}{\chi\x1}=-f\ONR $ and $\chi \x1  = \OOd(L^{-3})$.
It is easy to see that \eref{eq:iterationstep} can be rearranged as
\begin{equs}
  H\x 2  \equiv e^{\chi\x {1} }H\x{1}&  =  h\x0 +f\x{0,\R} +f\x{1,\R} +f\x2 + R\x2 ~,
\end{equs}
with
\begin{equa}[e:fr2ndround]
	f\x 2 &= \sum_{\ell=1}^{N_*-1} \frac 1 {\ell!} \ad_{\chi\x{1}}^\ell\left(  f\x{0,\R} +f\x{1} - \frac {f\x{1, \NR}}{\ell+1} \right) = \OOd(L^{-4})~,\\
\RR\x{2} & =  \sum_{\ell=0}^\infty \frac 1 {\ell!} \ad_{\chi\x{1}}^\ell \RR\x{1} + \sum_{\ell=N_*}^\infty \frac 1 {\ell!} \ad_{\chi\x{1}}^\ell\left( f\x{0,\R} +f\x{1} - \frac {f\x{1, \NR}}{\ell+1}\right)= \OOd(L^{-2N_*})~,
\end{equa}
where we have used \lref{lem:propsOO} (g).

The potentials $f\x {0,\R}$ and $f\x{1,\R}$ do not involve $\phi_k$ by definition. Thus, since
only $f\x 2$ and $R\x 2$ involve $\phi_k$, we observe that the rotator $k$ is
now decoupled up to order $L^{-4}$.

Since $f\x 1$, and hence also $\chi \x 1$, involve only the sites $k-2,\dots, k+2$, we observe that
\begin{equs}[eq:poisschi1f1]
\poiss{ f\x{0,\R} +f\x{1} - \frac {f\x{1, \NR}}{\ell+1}}{\chi \x1}
\end{equs}
depends only on the variables of the sites $k-3,\dots, k+3$.
In fact, the support is only extended due to $f\x {0,\R}$:
the Poisson brackets $\poiss{U_{k-3}(\phi_{k-2} - \phi_{k-3})}{\chi \x 1}$
and  $\poiss{U_{k+2}(\phi_{k+3} - \phi_{k+2})}{\chi \x 1}$ couple
the site $k$ to the sites $k+3$ and $k-3$.
Taking further Poisson brackets of \eref{eq:poisschi1f1}
with $\chi \x 1$ does not extend the range of the interactions,
so we obtain

\begin{observation}\Label{lem:support}
The function $f\x 2$ involves only the sites $k-3,\dots, k+3$.
\end{observation}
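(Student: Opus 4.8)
The plan is to repeat, one level up, the locality argument that produced the analogous statement for $f\x 1$. By \eref{e:fr2ndround}, $f\x 2$ is a \emph{finite} sum of iterated Poisson brackets of the form $\ad_{\chi\x 1}^\ell\bigl(f\x{0,\R}+f\x 1-\frac{f\x{1,\NR}}{\ell+1}\bigr)$ with $1\leq \ell\leq N_*-1$; hence it suffices to show that each such summand depends only on the variables $(\phi_i,I_i)$ with $i\in\{k-3,\dots,k+3\}$.

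First I would record the elementary \emph{locality} of the Poisson bracket \eref{e:poisson}: if $F$ depends only on the sites in a set $S_F\subseteq\{1,\dots,n\}$ and $G$ only on the sites in $S_G$, then $\poiss{F}{G}=0$ whenever $S_F\cap S_G=\emptyset$, and in general $\poiss{F}{G}$ depends only on the sites in $S_F\cup S_G$. I would also note that the operator $Q$ of \eref{eq:defQ} does not enlarge supports: it acts Fourier mode by Fourier mode, and on a mode $\bmm$ with $\supp(\bmm)\subseteq S$ it merely divides by $\IM=\sum_{i\in S}I_i\mu_i$, which involves no new site. Combined with the preceding observation that $f\x 1$ involves only the sites $\{k-2,\dots,k+2\}$, this shows that $\chi\x 1=Qf\x 1$ and $f\x{1,\NR}$ also involve only the sites $\{k-2,\dots,k+2\}$.

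The core step is then the innermost bracket $\poiss{f\x{0,\R}+f\x 1-\frac{f\x{1,\NR}}{\ell+1}}{\chi\x 1}$. The $f\x 1$ and $f\x{1,\NR}$ contributions stay inside $\{k-2,\dots,k+2\}$ by locality. For $f\x{0,\R}=\sum_{i\in\{1,\dots,n-1\}\setminus\{k-1,k\}}U_i(\phi_{i+1}-\phi_i)$ (see \eref{eq:decompF0}), the bracket of the $i$-th term with $\chi\x 1$ vanishes unless $\{i,i+1\}$ meets $\{k-2,\dots,k+2\}$, i.e.\ unless $i\in\{k-3,k-2,k+1,k+2\}$ (the indices $i=k-1,k$ being absent from $f\x{0,\R}$). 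For $i=k-2$ and $i=k+1$ the two sites appearing in $U_i$ already lie in $\{k-2,\dots,k+2\}$, so nothing is added; only the terms $U_{k-3}(\phi_{k-2}-\phi_{k-3})$ and $U_{k+2}(\phi_{k+3}-\phi_{k+2})$ enlarge the support, by exactly one site on each side. Hence this bracket depends only on the sites $\{k-3,\dots,k+3\}$. A one-line induction on $\ell$ then concludes: since $\chi\x 1$ is supported in $\{k-2,\dots,k+2\}\subseteq\{k-3,\dots,k+3\}$, applying $\ad_{\chi\x 1}$ one more time cannot leave $\{k-3,\dots,k+3\}$, and summing over $\ell$ gives the claim — with the usual convention that all index ranges are intersected with $\{1,\dots,n\}$, which disposes of the cases $k\leq 3$ and $k\geq n-2$.

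I do not expect a genuine obstacle here: this is a bookkeeping argument resting entirely on locality of the Poisson bracket. The only two points deserving a moment's care are (a) verifying that $Q$ preserves supports, so that $\chi\x 1$ is localized exactly where $f\x 1$ is, and (b) correctly pinning down the two boundary terms $U_{k-3}$ and $U_{k+2}$ of $f\x{0,\R}$ — these are what make the interaction range grow by precisely one site on each side at this canonical transformation, and by the same mechanism at every subsequent one.
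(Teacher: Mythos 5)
Your proof is correct and takes essentially the same route as the paper: identify $\chi\x 1$ as supported on $\{k-2,\dots,k+2\}$ (the paper asserts this directly, you give the short justification that $Q$ acts mode by mode and never enlarges supports), observe that the inner Poisson bracket $\poiss{f\x{0,\R}+f\x 1-\frac{f\x{1,\NR}}{\ell+1}}{\chi\x 1}$ only reaches $k\pm 3$ through the boundary terms $U_{k-3}$ and $U_{k+2}$ of $f\x{0,\R}$, and then note that further applications of $\ad_{\chi\x 1}$ cannot grow the support. The only difference is expository: you spell out the locality lemma for $\poiss{\cdot}{\cdot}$ and the support-preservation of $Q$ explicitly, which the paper leaves implicit.
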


\subsection{Canonical transformations for $j>1$}

We are now ready for the inductive step.
We start with
\begin{equs}
  H\x {j-1}  = h\x0+ \sum_{m=0}^{j-2}f\x{m,\R}+ f\x {j-1} + \RR\x{j-1}~,
\end{equs}
where
\begin{equa}[e:frordrejm1]
	f\x {j-1} &= \OOd(L^{-2(j-1)})~,\\
\RR\x{j-1} & = \OOd(L^{-2N_*})~,
\end{equa}
and where $f\x{j-1}$ depends only on the sites $k-j, \dots, k+j$.

 We decompose again
$$f\x{j-1} = f\x{j-1, \NR} + f\x{j-1, \R} 	~,
$$
and in order to remove $f\x{j-1, \NR}$, we follow \sref{subs:eliminateNR} with
\begin{equs}
f = f\x {j-1}~, \qquad 	g = \sum_{m=0}^{j-2}f\x{m,\R} + \RR\x{j-1}~.
\end{equs}
We thus define
\begin{equs}\label{e:chij}
 \chi\x{j-1}& =Q f\x {j-1} =  \OOd(L^{1-2j})~,
\end{equs}
so that $\{h\x0, \chi\x{j-1}\}  = - f\x{j-1, \NR}$.
This yields, by rearranging the terms of \eref{eq:iterationstep},
\begin{equs}[eq:defHxj]
  H\x j  \equiv e^{\chi\x {j-1} }H\x{j-1} =h\x0+ \sum_{m=0}^{j-1}f\x{m,\R} + f\x j + \RR\x{j}~,
\end{equs}
with
\begin{equa}[e:fr]
	f\x j &= \sum_{\ell=1}^{N_*-1} \frac 1 {\ell!} \ad_{\chi\x{j-1}}^\ell\left(  \sum_{m=0}^{j-2}f\x{m,\R} +f\x{j-1} - \frac {f\x{j-1, \NR}}{\ell+1} \right)~,\\
\RR\x{j} & =  \sum_{\ell=0}^\infty \frac 1 {\ell!} \ad_{\chi\x{j-1}}^\ell \RR\x{j-1} + \sum_{\ell=N_*}^\infty \frac 1 {\ell!} \ad_{\chi\x{j-1}}^\ell\left(  \sum_{m=0}^{j-2}f\x{m,\R} +f\x{j-1} - \frac {f\x{j-1, \NR}}{\ell+1}\right)~.
\end{equa}

At this point, we have an inductive definition of $f\x{j}$ and
$\RR\x{j}$ (one can set $R\x 0 = 0$).
By \lref{lem:propsOO} (g) and \eref{e:frordrejm1}, we find
\begin{equa}[e:frordre]
	f\x j &= \OOd(L^{-2j})~,\\
\RR\x{j} & = \OOd(L^{-2N_*})~.
\end{equa}

Moreover, since $\chi \x {j-1}$ only involves the variables $k-j, \dots, k+j$, we find
that
\begin{equs}[eq:chijm2chichm1]
\poiss{  \sum_{m=0}^{j-2}f\x{m,\R} +f\x{j-1} - \frac {f\x{j-1, \NR}}{\ell+1} }{\chi\x{j-1}}
\end{equs}
 depends only on the sites $k-j-1, \dots, k+j+1$. Indeed, the first argument of the Poisson brackets depends only on $k-j, \dots, k+j$, except for the term $f\x {0,\R}$, which
once again extends the range of the interactions by one due to $U_{k-j-1}(\phi_{k-j}-\phi_{k-j-1})$ and $U_{k+j}(\phi_{k+j+1}-\phi_{k+j})$. Taking further Poisson brackets of \eref{eq:chijm2chichm1} with $\chi \x {j-1}$ does not extend the dependence to new variables, so that we finally have
\begin{observation}
For $j\geq 1$, the function $f\x j$ depends only on the sites $\max(1,k-j-1), \dots, \min(k+j+1, n)$.
\end{observation}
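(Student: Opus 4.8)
The plan is to argue by induction on $j$, tracking how the supports of the functions entering \eref{e:fr} spread under the Poisson brackets with $\chi\x{j-1}$. The base case $j=1$ is the first Observation above, read with the boundary-aware convention $\{\max(1,k-2),\dots,\min(k+2,n)\}$ noted there. For the inductive step I would assume the claim for all indices $<j$, so that $f\x{j-1}$ --- and hence its Fourier sub-sums $f\x{j-1,\NR}$ and $f\x{j-1,\R}$ --- depends only on the sites $\{k-j,\dots,k+j\}$ (intersected with $\{1,\dots,n\}$, which I suppress), while $f\x{m}$ depends only on $\{k-m-1,\dots,k+m+1\}$ for each $1\le m\le j-2$. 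Note that $\RR\x{j-1}$ does not appear in $f\x{j}$ in \eref{e:fr} (only in $\RR\x{j}$), so its support plays no role here.

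I would first record two elementary facts. (1) The operator $Q$ of \eref{eq:defQ} does not enlarge supports: if $f$ depends only on the sites of a set $S$, then $f_\bmm\equiv 0$ unless $\supp(\bmm)\subseteq S$, and for such $\bmm$ both $f_\bmm(\bI)$ and the denominator $\bI\cdot\bmm=\sum_{i\in\supp(\bmm)}I_i\mu_i$ (non-zero on the relevant domain by \lref{lem:lhalf}) involve only the variables of $S$; hence $Qf$, and in particular $\chi\x{j-1}=Qf\x{j-1}$, depends only on the sites of $S$. (2) For admissible $F$ and $G$, one has $\supp(\poiss{F}{G})\subseteq\supp(F)\cup\supp(G)$, and every term with $i\notin\supp(G)$ in the defining sum vanishes since $\partial_{\phi_i}G=\partial_{I_i}G=0$ there.

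Next I would analyze the inner function $F_\ell := \sum_{m=0}^{j-2}f\x{m,\R}+f\x{j-1}-\frac{1}{\ell+1}f\x{j-1,\NR}$ appearing in \eref{eq:chijm2chichm1} and \eref{e:fr}, writing $F_\ell = f\x{0,\R}+H_\ell$ with $H_\ell := \sum_{m=1}^{j-2}f\x{m,\R}+f\x{j-1}-\frac{1}{\ell+1}f\x{j-1,\NR}$. By the induction hypothesis $\supp(H_\ell)\subseteq\{k-j,\dots,k+j\}$, whereas $f\x{0,\R}=\sum_{i\in\{1,\dots,n-1\}\setminus\{k-1,k\}}U_i(\phi_{i+1}-\phi_i)$ from \eref{eq:decompF0} is a sum of nearest-neighbor terms with no dependence on the actions. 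Hence $\poiss{H_\ell}{\chi\x{j-1}}$ has support in $\{k-j,\dots,k+j\}$ by fact (2), while, since $f\x{0,\R}$ has no $I$-dependence, $\poiss{f\x{0,\R}}{\chi\x{j-1}}=\sum_i\partial_{\phi_i}f\x{0,\R}\,\partial_{I_i}\chi\x{j-1}$ with $i$ running only over $\supp(\chi\x{j-1})=\{k-j,\dots,k+j\}$, and $\partial_{\phi_i}f\x{0,\R}$ involves only $\phi_{i-1},\phi_i,\phi_{i+1}$; taking the union over $i$ gives $\supp(\poiss{f\x{0,\R}}{\chi\x{j-1}})\subseteq\{k-j-1,\dots,k+j+1\}$, the one-site extension on each side being produced precisely by $U_{k-j-1}(\phi_{k-j}-\phi_{k-j-1})$ and $U_{k+j}(\phi_{k+j+1}-\phi_{k+j})$ when these are present, and stopping at the chain ends otherwise. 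Therefore $\poiss{F_\ell}{\chi\x{j-1}}$ depends only on $\{\max(1,k-j-1),\dots,\min(k+j+1,n)\}$.

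Finally, for the higher brackets $\ad_{\chi\x{j-1}}^\ell F_\ell$ with $\ell\ge 2$, I would simply iterate fact (2): once a function has support in $\{\max(1,k-j-1),\dots,\min(k+j+1,n)\}$, bracketing it again with $\chi\x{j-1}$ --- whose support is the \emph{smaller} set $\{k-j,\dots,k+j\}$ --- cannot enlarge the support. Summing the finitely many terms of \eref{e:fr} then yields the Observation. The step I expect to require the most care is the handling of $f\x{0,\R}$: unlike every other summand it is not localized near site $k$, so one must argue that only its part meeting $\{k-j,\dots,k+j\}$ contributes to the brackets and that its nearest-neighbor structure forces the spreading to be exactly one site per transformation; everything else is routine support bookkeeping.
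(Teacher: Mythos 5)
Your proposal is correct and follows essentially the same route as the paper: you argue by induction, observe that $Q$ preserves supports so $\chi\x{j-1}$ lives on $\{k-j,\dots,k+j\}$, and isolate $f\x{0,\R}$ as the only summand that can spread the support, with the nearest-neighbor structure and lack of $\bI$-dependence forcing exactly a one-site extension on each side via $U_{k-j-1}$ and $U_{k+j}$; subsequent brackets with $\chi\x{j-1}$ are then handled by your fact (2). The only difference is cosmetic: you state the two support-bookkeeping lemmas (that $Q$ and $\poiss{\cdot}{\cdot}$ do not enlarge supports beyond their arguments) explicitly, whereas the paper leaves them implicit.
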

(Note that we have to exclude the case $j=0$ from this statement, since the function $f\x 0$
involves all of the angles.)

The observation above as well as the orders in \eref{e:fr} are illustrated in
Table~\ref{table:illustrfchi} in the case $n=9$ and $k=6$.
\begin{table}[htb]
\centering
\begin{tabular}{l|lllllllll|l}
             & 1           & 2           & 3           & 4           & 5           & {\bf 6}         & 7           & 8           & 9           &                \\ \hline
$f\x 0$      & \textbullet & \textbullet & \textbullet & \textbullet & \textbullet & \textbullet & \textbullet & \textbullet & \textbullet & $\OOd(L^0)$    \\
$\chi \x 0$  &             &             &             &             & \textbullet & \textbullet & \textbullet &             &             & $\OOd(L^{-1})$ \\
$f\x 1$      &             &             &             & \textbullet & \textbullet & \textbullet & \textbullet & \textbullet &             & $\OOd(L^{-2})$ \\
$\chi \x 1$ &             &             &             & \textbullet & \textbullet & \textbullet & \textbullet & \textbullet &             & $\OOd(L^{-3})$ \\
$f\x 2$      &             &             & \textbullet & \textbullet & \textbullet & \textbullet & \textbullet & \textbullet & \textbullet & $\OOd(L^{-4})$ \\
$\chi \x 2$  &             &             & \textbullet & \textbullet & \textbullet & \textbullet & \textbullet & \textbullet & \textbullet & $\OOd(L^{-5})$ \\
$f\x 3$      &             & \textbullet & \textbullet & \textbullet & \textbullet & \textbullet & \textbullet & \textbullet & \textbullet & $\OOd(L^{-6})$ \\
$\chi \x 3$  &             & \textbullet & \textbullet & \textbullet & \textbullet & \textbullet & \textbullet & \textbullet & \textbullet & $\OOd(L^{-7})$ \\
$f\x 4$      & \textbullet & \textbullet & \textbullet & \textbullet & \textbullet & \textbullet & \textbullet & \textbullet & \textbullet & $\OOd(L^{-8})$ \\
$\chi \x 4$  & \textbullet & \textbullet & \textbullet & \textbullet & \textbullet & \textbullet & \textbullet & \textbullet & \textbullet & $\OOd(L^{-9})$
\end{tabular}
\caption{Illustration of the iterative construction in the case $n=9$ and $k=6$. The bullets indicate
the dependence on the sites.}\label{table:illustrfchi}
\end{table}

\section{Dynamics after $k-1$ iterations}\label{sec:k1}

We stop the process above after $k-1$ iterations. Defining
\begin{equs}[eq:HtildeORinit]
\tilde H &\equiv H\x{k-1}  =h\x0+ \sum_{m=0}^{k-2}f\x{m,\R} + f\x {k-1} + \RR\x{k-1}
\end{equs}
and letting
\begin{equ}
	N_* = k~,
\end{equ}
we find the estimates
\begin{equs}
	\sum_{m=0}^{k-2}f\x{m,\R} & = \OOd(L^{0})~,\\
	f\x {k-1}  & = \OOd(L^{2-2k})~,\\
	\RR\x{k-1}  & = \OOd(L^{-2k})~.
\end{equs}

\begin{remark}\label{rem:leftrightcoupled} In $\tilde H$, the site $k$ is decoupled from its neighbors up to order $L^{2-2k}$. However, if $k<n$, this does not imply that the subsystems $\{(\tilde I_i, \tilde \phi_i): i<k\}$ and $\{(\tilde I_i, \tilde \phi_i): i > k\}$
are decoupled from each other up to order $L^{2-2k}$. Indeed, $f\x {1,\R}$ contains terms of order $L^{-2}$ (\ie, of rather ``low'' order) coupling the sites $k-1$ and $k+1$.
\end{remark}

Tracing back the different steps, we have
\begin{equs}
\tilde H & = e^{\chi\x {k-2}}e^{\chi\x {k-3}}\cdots e^{\chi\x 0} H \\
&= H \circ \Psi~,
\end{equs}
where $\Psi$ is the composition of the time-one flow of the Hamiltonian fields
of $\chi \x0$, $\chi \x 1, \dots, \chi \x{k-2}$.
The coordinates $x = (\bI, \bphi)$ are then transformed into new coordinates
$\tilde x = (\tilde \bI, \tilde \bphi) = \Psi^{-1}(x)$,
given by
\begin{equs}[eq:xtildefctx]
\tilde x_a = \Psi_a^{-1}(x) \equiv 	e^{-\chi \x 0}e^{-\chi \x 1}\cdots e^{-\chi\x {k-2}}x_a~,
\end{equs}
where the argument of the $\chi \x i$ is $x$. Here and in the sequel, $x_a$
denotes any component of $(\bI, \bphi)$.
Conversely,
\begin{equs}
x_a = 	 \Psi_a(\tilde x) = e^{\chi\x {k-2}}e^{\chi\x {k-3}}\cdots e^{\chi\x 0} \tilde x_a~,
\end{equs}
where the argument of the $\chi \x i$ is now $\tilde x$, and the Poisson brackets are
taken with respect to the $\tilde x$, that is according to the rules
$\poiss{\tilde \phi_i}{ \tilde \phi_j} = \poiss{\tilde I_i}{ \tilde I_j} = 0$ and
$\poiss{\tilde \phi_i}{ \tilde I_j} = \delta_{i,j}$.

Obviously, we have
\begin{equs}[eq:dynamicsTilde]
\frac {\d}{\d t}\tilde x_a = \poiss{\tilde x_a}{\tilde H} - \gamma I_1 \partial_{I_1} \tilde x_a~,
\end{equs}
where again the Poisson brackets with respect to $\tilde x$ are used, and the argument of $\tilde H$ is $\tilde x$.

We now state and prove a series of technical results.

First, using recursively \lref{lem:propsOO} (g), the fact that all the $\chi \x j$ are at most $\OOd(L^{-1})$, and the fact that $I_i = \OOd(L^1)$, we have
\begin{equa}[eq:Tildeiimin]
\tilde I_i&= e^{-\chi \x 0}\cdots e^{-\chi\x {k-2}} I_i\\
& = I_i +  \OOd (L^{-1})~.
\end{equa}
Similarly, using \lref{lem:nonanalyticphii} we find that
\begin{equa}[eq:Tildephiimin]
\tilde \phi_i&= e^{-\chi \x 0}\cdots e^{-\chi\x {k-2}} \phi_i\\
& = \phi_i +  \OOd (L^{-2})~.
\end{equa}

\begin{remark}\label{rem:optimalpowers}
	The powers of $L$ in \eref{eq:Tildeiimin} and \eref{eq:Tildephiimin} are not optimal. Recalling that $\chi \x j = Qf \x j$ depends only on the sites $k-j-1, \dots, k+j+1$,
and that $\chi \x j = \OOd(L^{-2j-1})$, the interested reader can check that
for $i \neq k$ we actually have $\tilde I_i - I_i = \OOd(L^{1-2|k-i|})$ and $\tilde \phi_i - \phi_i = \OOd(L^{-2|k-i|})$ (see  Table~\ref{tbl:tableorders}). We shall not need the full extent of this result.
\begin{table}
\begin{center}
	\begin{tabular}{ccc}
\toprule
 & \multicolumn{2}{c}{Order} \\
\cmidrule(r){2-3}

$i$ & ~~~$\tilde I_i  - I_i$~~~ & ~~~$\tilde \phi_i- \phi_i$ ~~~ \\ \hline
1   & $L^{3-2k}$               &    $L^{2-2k}$            \\
2   & $L^{5-2k}$          &       $L^{4-2k}$                  \\
  $\dots$  &    $\dots$                 &     $\dots$         \\
$k-2$    &     $L^{-3}$                 &    $L^{-4}$                       \\
$k-1$    &     $L^{-1}$                 &    $L^{-2}$                       \\
$k$    &     $L^{-1}$                 &    $L^{-2}$                       \\
$k+1$    &     $L^{-1}$                 &    $L^{-2}$                       \\
$k+2$    &     $L^{-3}$                 &    $L^{-4}$                       \\
  $\dots$  &    $\dots$                 &     $\dots$         \\
\bottomrule
\end{tabular}
\end{center}
\caption{An overview of the relevant powers of $L$ in \rref{rem:optimalpowers}.}\label{tbl:tableorders}
\end{table}
\end{remark}

The following lemma and corollary guarantee that the domains from \dref{def:drs} and the  $\OOd$-notation
are sufficiently stable under our change of coordinates.

\begin{lemma} \label{lem:psimoinsid}
For all $r,\sigma, \rho > 0$ small enough, and all $r'<r, \sigma'<\sigma, \rho'<\rho$, the following holds:
for all $L$ large enough,
\begin{equs}[eq:inclusions1]
\Psi^\sharp(\DD_{L,r',\sigma'}) &\subset \DD_{L,r,\sigma}~,
\end{equs}
\begin{equs}[eq:inclusions2]
\Psi^\sharp(\BB_{L,\rho'}) &\subset \BB_{L,\rho}~,
\end{equs}
where $\Psi^\sharp$ stands for either $\Psi^{-1}$ or  $\Psi$.
\end{lemma}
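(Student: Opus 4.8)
The plan is to deduce both inclusions directly from the near-identity estimates \eref{eq:Tildeiimin} and \eref{eq:Tildephiimin}: they say that $\Psi^{-1}$ moves each action coordinate by an $\OOd(L^{-1})$ and each angle coordinate by an $\OOd(L^{-2})$, and the same holds for $\Psi$ itself since $\Psi$ is likewise a finite composition of time-one flows of the Hamiltonians $\chi\x 0,\dots,\chi\x{k-2}$, all of which are at most $\OOd(L^{-1})$ (so \lref{lem:propsOO}~(g) and \lref{lem:nonanalyticphii} yield $\Psi_{I_i}=I_i+\OOd(L^{-1})$ and $\Psi_{\phi_i}=\phi_i+\OOd(L^{-2})$, the series involved being convergent on the relevant domains). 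The point is then simply that passing from $\DD_{L,r',\sigma'}$ to the larger $\DD_{L,r,\sigma}$ enlarges the admissible window of each $I_i$ by $(r-r')L$ and of each $\Im\phi_i$ by $\sigma-\sigma'$, and passing from $\BB_{L,\rho'}$ to $\BB_{L,\rho}$ enlarges the window of each $I_i$ by $\rho-\rho'$; these margins dominate the $\OOd(L^{-1})$, $\OOd(L^{-2})$ displacements as soon as $L$ is large. Accordingly, I would first fix $r,\sigma>0$ small enough that \eref{eq:Tildeiimin}--\eref{eq:Tildephiimin} (and their $\Psi$-analogues) hold on $\DD_{L,r,\sigma}$ with some constants $C,L_1>0$; ensuring this validity is the only reason for requiring $r,\sigma$ small.

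For \eref{eq:inclusions1}, let $r'<r$, $\sigma'<\sigma$ and $x\in\DD_{L,r',\sigma'}\subset\DD_{L,r,\sigma}$. Then, for each $i$,
\begin{equ}
  |\Psi^\sharp_{I_i}(x)-L\delta_{i,k}|\le|\Psi^\sharp_{I_i}(x)-I_i|+|I_i-L\delta_{i,k}|\le CL^{-1}+r'L<rL~,
\end{equ}
\begin{equ}
  |\Im\Psi^\sharp_{\phi_i}(x)|\le|\Psi^\sharp_{\phi_i}(x)-\phi_i|+|\Im\phi_i|\le CL^{-2}+\sigma'<\sigma~,
\end{equ}
the final inequalities holding once $L$ is so large that $CL^{-1}<(r-r')L$ and $CL^{-2}<\sigma-\sigma'$; this is exactly $\Psi^\sharp(x)\in\DD_{L,r,\sigma}$. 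For \eref{eq:inclusions2}, fix $\rho>0$ and $\rho'<\rho$; by \rref{rem:inclusionBBDD}, $\BB_{L,\rho'}\subset\DD_{L,r,\sigma}$ once $L\ge\rho'/r$, so the same chain applies to $x\in\BB_{L,\rho'}$ and gives $|\Psi^\sharp_{I_i}(x)-L\delta_{i,k}|\le CL^{-1}+\rho'<\rho$ for $L$ large. It remains to observe that $\Psi^\sharp$ sends real points to real points: each $\chi\x j=Qf\x j$ is a real, $\bphi$-periodic function (the operator $Q$ preserves reality), hence the flows of the fields $X_{\chi\x j}$, and therefore $\Psi$ and $\Psi^{-1}$, map $\real^n\times\torus^n$ into itself; so $\Psi^\sharp(x)\in\Omega$ and the action bounds place it in $\BB_{L,\rho}$.

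I do not expect any substantive obstacle here. The only care needed is bookkeeping: choosing $r,\sigma$ below the thresholds on which the $\OOd$-bounds \eref{eq:Tildeiimin}--\eref{eq:Tildephiimin} hold (so that $\DD_{L,r',\sigma'}$, and via \rref{rem:inclusionBBDD} also $\BB_{L,\rho'}$, lie in their domain of validity), and then noticing that the action margin $(r-r')L$ grows with $L$ while the coordinate displacement is only $\OOd(L^{-1})$, so the inclusion is automatic for large $L$ — the same comparison of a fixed margin against an $\OOd(L^{-2})$ displacement handles the angle directions, and a fixed margin $\rho-\rho'$ against an $\OOd(L^{-1})$ displacement handles $\BB_{L,\rho}$.
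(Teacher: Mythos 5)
Your proof is correct, and it is cleverer (or at any rate more elementary) than the paper's in one respect. The paper only proves the displacement estimate $\|\Psi^{-1}(x)-x\|\le C/L$ and then deduces \emph{both} inclusions from the chain of ball inclusions $B(x,\varepsilon)\subset\Psi^{-1}(B(x,\varepsilon'))\subset B(x,\varepsilon'')$. The second ball inclusion is the trivial one; the first, which is what gives the $\Psi$-inclusion once you unwind it to read $\Psi(B(x,\varepsilon))\subset B(x,\varepsilon')$, is the nontrivial step — its justification (left implicit in the paper) requires a priori control of where $\Psi(z)$ lands before one can invoke the $\Psi^{-1}$-estimate at the image point, i.e.\ some continuity/surjectivity argument for the near-identity homeomorphism. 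You sidestep this entirely by noting that $\Psi_a=e^{\chi\x{k-2}}\cdots e^{\chi\x0}x_a$ is just as much a finite composition of Lie flows with generators $\OOd(L^{-1})$ as $\Psi^{-1}_a$ is, so \lref{lem:propsOO}~(g) and \lref{lem:nonanalyticphii} yield $\Psi_{I_i}=I_i+\OOd(L^{-1})$ and $\Psi_{\phi_i}=\phi_i+\OOd(L^{-2})$ directly, after which the inclusions are just coordinate-wise triangle inequalities against margins that (for the actions) grow like $(r-r')L$ and (for $\BB$ and the imaginary angle strip) stay fixed. Your accounting of these margins, the use of \rref{rem:inclusionBBDD} to reduce $\BB_{L,\rho'}$ to a sub-case of the $\DD$-estimate, and the reality check (each $\chi\x j=Qf\x j$ is real, hence its Hamiltonian flow preserves $\real^n\times\torus^n$) are all correct and complete. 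The only thing worth saying in its favor of the paper's ball argument is that it derives everything from the single estimate on $\Psi^{-1}$, whereas you redo the Lie-series computation once more for $\Psi$; that is a negligible cost here since the computation is identical.
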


\begin{proof}
By \eref{eq:Tildeiimin} and \eref{eq:Tildephiimin}, we can find some $r_0, \sigma_0, L_0, C > 0$ such that for all $L\geq L_0$ and all $x\in \DD_{L, r_0, \sigma_0}$, we have
\begin{equs}
\| \Psi^{-1}(x) - x \| \leq \frac C L~.
\end{equs}
Thus, for any $0 < \varepsilon < \varepsilon '  < \varepsilon '' $, we have some $L_0' \geq L_0$ such that for all $L\geq L_0'$ and all $x\in \DD_{L, r_0, \sigma_0}$, the following inclusions of complex balls hold:
\begin{equs}[eq:cplxballs]
	B(x, \varepsilon) \subset \Psi^{-1}(B(x, \varepsilon')) \subset B(x, \varepsilon'')~.
\end{equs}

The second inclusion immediately proves that $\Psi^{-1}(\DD_{L,r',\sigma'}) \subset \DD_{L,r,\sigma}$ for large enough $L$, provided that $r< r_0$ and $\sigma < \sigma_0$. Under the same conditions, the first inclusion in \eref{eq:cplxballs}, which also reads $\Psi(B(x, \varepsilon) )\subset B(x, \varepsilon')$, proves that $\Psi(\DD_{L,r',\sigma'}) \subset \DD_{L,r,\sigma}$.

Next, \eref{eq:inclusions2} follows in a similar manner, using \rref{rem:inclusionBBDD} and the fact that $\Psi$ maps real points to real points, so that a real equivalent of \eref{eq:cplxballs} holds.
\end{proof}

The lemma above and the definition of $\OOd(L^{s})$ immediately imply
\begin{corollary} \Label{cor:psimoinsidcor} If $f=\OOd(L^s)$, then
\begin{equs}[eq:fcomppsi]
f \circ \Psi^\sharp = \OOd(L^s)~,
\end{equs}
where $\Psi^\sharp$ stands for either $\Psi^{-1}$ or  $\Psi$.

\end{corollary}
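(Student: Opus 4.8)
The plan is to unfold the two relevant definitions — that of $\OOd(L^s)$ and that of the sup-norm $\norm{\cdot}{L,\rs}$ on $\DD_{L,\rs}$ — and to observe that precomposition with $\Psi^\sharp$ merely relocates the point at which $f$ is evaluated, a relocation which \lref{lem:psimoinsid} already keeps under control.

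First I would use the hypothesis $f=\OOd(L^s)$ to produce constants $r,\sigma,L_0,C>0$ with $\norm{f}{L,\rs}\le C L^s$ for every $L\ge L_0$. Since $\DD_{L,r_1,\sigma_1}\subset\DD_{L,r,\sigma}$ whenever $r_1\le r$ and $\sigma_1\le\sigma$, this bound survives shrinking $r$ and $\sigma$; hence there is no loss in assuming that $r,\sigma$ are small enough for the smallness hypothesis of \lref{lem:psimoinsid} to hold with these values playing the role of the larger pair there.

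Next, fix any $r'<r$ and $\sigma'<\sigma$. By \lref{lem:psimoinsid} there is an $L_1\ge L_0$ such that $\Psi^\sharp(\DD_{L,r',\sigma'})\subset\DD_{L,r,\sigma}$ for all $L\ge L_1$, and therefore, for all such $L$,
$$
\norm{f\circ\Psi^\sharp}{L,r',\sigma'}
=\sup_{x\in\DD_{L,r',\sigma'}}\bigl|f(\Psi^\sharp(x))\bigr|
\le\sup_{y\in\DD_{L,r,\sigma}}|f(y)|
=\norm{f}{L,\rs}\le C L^s .
$$
This is exactly $f\circ\Psi^\sharp=\OOd(L^s)$ (with the same $C$ and $s$, and $L_0$ replaced by $L_1$). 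In passing I would check that $f\circ\Psi^\sharp$ is admissible in the sense of \dref{def:drs}: it is analytic on $\DD_{L,r',\sigma'}$, being the composition of the analytic map $\Psi^\sharp$ (a finite composition of time-one flows of the analytic fields $X_{\chi^{(j)}}$) with the analytic function $f$, and it is $\bphi$-periodic because, by \eref{eq:Tildeiimin}--\eref{eq:Tildephiimin}, $\Psi^\sharp$ shifts each angle by a $\bphi$-periodic quantity and modifies the actions only by $\bphi$-periodic quantities. The same argument applies verbatim to $\Psi^{-1}$ and to $\Psi$.

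There is essentially no obstacle here; the only two points deserving a sentence of justification are the reduction to ``$r,\sigma$ small enough'' for \lref{lem:psimoinsid} and the admissibility check, both of which are immediate once \lref{lem:psimoinsid} and \eref{eq:Tildeiimin}--\eref{eq:Tildephiimin} are in hand.
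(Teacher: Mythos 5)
Your argument is correct and is exactly the route the paper intends: the paper states that \lref{lem:psimoinsid} together with the definition of $\OOd(L^s)$ ``immediately imply'' the corollary, and your write-up simply fills in the details of that implication (shrink $r,\sigma$ so the lemma applies, pick $r'<r$, $\sigma'<\sigma$, use the domain inclusion to transfer the sup bound, and verify admissibility). No gap; the paper also offers an alternative proof via \lref{lem:propsOO}~(g) and the Lie-series expression for $f\circ\Psi^\sharp$, but you do not need that route.
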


Thanks to this corollary, we will not need to specify
whether the $\OOd$ are to be expressed in terms of $x$ or $\tilde x$.
Note that the corollary can also be viewed as a consequence of
\lref{lem:propsOO} (g) and the fact that $f \circ \Psi = e^{\chi\x {k-2}}e^{\chi\x {k-3}}\cdots e^{\chi\x 0}f $ 
and $f \circ \Psi^{-1} =  	e^{-\chi \x 0}e^{-\chi \x 1}\cdots e^{-\chi\x {k-2}} f $.

A series of definitions and technical results is necessary in order to understand the dynamics of $\tilde x$.

\begin{definition}\label{def:OOdRNRfin}
Let $f = \OOd(L^{s})$. We say that $f = \OOd_{\R}(L^{s})$ if the Fourier series \eref{eq:fgeneric} contains only resonant terms (\ie, $f = f^\R$), and similarly we say that $f=\OOd_{\NR}(L^{s})$ if
the Fourier series contains only non-resonant terms (\ie, $f = f^\NR$). Moreover, we denote by $\OOd^\fin$, $\OOd^\fin_{\NR}$ and $\OOd^\fin_{\R}$ the remainders whose Fourier series \eref{eq:fgeneric} contains only {\em finitely} many Fourier modes.
\end{definition}

Since $f\x{m,\R} = \OOd_{\R}^\fin(L^{-2m})$, observe that \eref{eq:HtildeORinit} reads
\begin{equs}[eq:HtildeOR]
  \tilde H(\tilde x)  &= \sum_{i=1}^n \frac{\tilde I_i^2}{2} + f\x{0, \R}(\tilde \bphi)+ \OOd_{\R}^\fin(L^{-2})	+ \OOd(L^{2-2k})~.
\end{equs}
We observe also that by \eref{e:chij} and the definition of $\chi \x j$,
\begin{equs}\Label{e:chijNR}
 \chi\x{j} = \OOd_\NR^\fin(L^{-2j-1})~.
\end{equs}

\begin{lemma}\label{lem:lemp1}We have
\begin{equs}
I_1 = \tilde I_1 + P_1 (\tilde x)~,\qquad \text{with }\quad
P_1 =  \OOd(L^{3-2k}) ~.
\end{equs}
More precisely, we have the decomposition
\begin{equs}[eq:decompositionP1]
P_1 =  -\partial_{ \phi_1}\chi\x{k-2} + \OOd(L^{5-4k})~.
\end{equs}
\end{lemma}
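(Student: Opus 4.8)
The plan is to compute $I_1 = \Psi_1(\tilde x)$ by expanding the composition of Lie series in \eref{eq:xtildefctx}, and to identify the dominant correction term. We start from
\begin{equs}
I_1 = \Psi_1(\tilde x) = e^{\chi\x {k-2}}e^{\chi\x {k-3}}\cdots e^{\chi\x 0} \tilde I_1~,
\end{equs}
where the Poisson brackets are taken with respect to $\tilde x$. Expanding each exponential and using $\poiss{\tilde I_1}{\chi\x j} = -\partial_{\phi_1}\chi\x j$ (now in the $\tilde x$-variables), the leading contribution is $\tilde I_1$ itself, and the first-order corrections are $-\partial_{\phi_1}\chi\x j$ summed over $j = 0, \dots, k-2$. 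By \eref{e:chijNR} we have $\chi\x j = \OOd(L^{-2j-1})$, so by \lref{lem:propsOO} (c) each $\partial_{\phi_1}\chi\x j = \OOd(L^{-2j-1})$; since $\chi\x j$ depends only on the sites $k-j-1,\dots,k+j+1$, the derivative $\partial_{\phi_1}\chi\x j$ vanishes unless $j \geq k-2$. Hence among the first-order terms, only $j = k-2$ contributes, giving $-\partial_{\phi_1}\chi\x{k-2} = \OOd(L^{3-2k})$, while all others are identically zero.

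\textbf{Controlling the remaining terms.} It then remains to show that every other term in the full expansion of $e^{\chi\x{k-2}}\cdots e^{\chi\x 0}\tilde I_1$ is $\OOd(L^{5-4k})$. These terms fall into two classes. The first class consists of higher-order terms $\frac{1}{\ell!}\ad_{\chi\x j}^\ell \tilde I_1$ with $\ell \geq 2$ coming from a single exponential, together with the mixed terms that contain at least two Poisson brackets against the various $\chi\x j$'s. The worst such term pairs the lowest-order factor $\partial_{\phi_1}\chi\x{k-2} = \OOd(L^{3-2k})$ with one more bracket; the cheapest extra bracket is against $\chi\x{k-2}$ itself (or against a $\chi\x j$ of comparable order), contributing a further factor $\OOd(L^{-2k})$ by \lref{lem:propsOO} (e), so these are $\OOd(L^{3-4k}) \subset \OOd(L^{5-4k})$. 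The second, more delicate, point is that $\tilde I_1 = I_1 - P_1$, so this is really an implicit equation; but since we only need the order of $P_1$ and the explicit form of its leading term, we can argue directly: by \lref{lem:propsOO} (g) and \lref{lem:nonanalyticphii}, each application of $e^{\pm\chi\x j}$ changes a function by a term of order at most $L^{-2j-1} \cdot L^{(\text{order of the function})-1}$, and only the single first-order bracket with $\chi\x{k-2}$ survives the site-support constraint at leading order, all others producing terms of order $\leq L^{5-4k}$.

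\textbf{Main obstacle.} The bookkeeping obstacle is to make the ``only $j = k-2$ survives'' argument airtight: one must carefully combine the support information (from Observation following \eref{eq:chijm2chichm1}, namely $\chi\x j$ depends only on sites $k-j-1,\dots,k+j+1$) with the order information ($\chi\x j = \OOd(L^{-2j-1})$), and check that in the nested sum
\begin{equs}
e^{\chi\x {k-2}}\cdots e^{\chi\x 0}\tilde I_1 = \tilde I_1 + \sum_{j=0}^{k-2}\poiss{\tilde I_1}{\chi\x j} + (\text{terms with} \geq 2 \text{ brackets})~,
\end{equs}
the single-bracket sum reduces to $-\partial_{\phi_1}\chi\x{k-2}$ because $\partial_{\phi_1}\chi\x j \equiv 0$ for $j < k-2$, and every multi-bracket term is $\OOd(L^{5-4k})$ or smaller. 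A clean way to organize this is to peel off the rightmost factors $e^{\chi\x 0},\dots,e^{\chi\x{k-3}}$ first (each of which leaves $\tilde I_1$ unchanged, since $\partial_{\phi_1}\chi\x j = 0$ for $j \leq k-3$ and the higher brackets then also vanish or are tiny), so that only the outermost factor $e^{\chi\x{k-2}}$ genuinely acts, giving $I_1 = \tilde I_1 - \partial_{\phi_1}\chi\x{k-2} + \sum_{\ell \geq 2}\frac{1}{\ell!}\ad_{\chi\x{k-2}}^\ell \tilde I_1 + (\text{cross terms})$, and then bounding the tail by \lref{lem:propsOO} (g) and \lref{lem:nonanalyticphii}. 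The order $P_1 = \OOd(L^{3-2k})$ is then immediate from $\partial_{\phi_1}\chi\x{k-2} = \OOd(L^{3-2k})$ and the fact that the correction is $\OOd(L^{5-4k}) = \OOd(L^{3-2k}) \cdot \OOd(L^{2-2k})$, which is genuinely of lower order since $k \geq 2$.
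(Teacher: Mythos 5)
Your ``clean way to organize this'' at the end is precisely the paper's proof: since $\chi\x j$ depends only on sites $k-j-1,\dots,k+j+1$, only $\chi\x{k-2}$ touches site $1$; hence $\poiss{\tilde I_1}{\chi\x j}\equiv 0$ for $j\le k-3$ (so $e^{\chi\x j}\tilde I_1=\tilde I_1$ exactly, not merely ``up to tiny terms''), the composition collapses to $I_1=e^{\chi\x{k-2}}\tilde I_1$ with no cross terms, and \lref{lem:propsOO}(g) applied to the single Lie series gives both statements. One small arithmetic slip in your preliminary discussion: an extra bracket against $\chi\x{k-2}=\OOd(L^{3-2k})$ contributes a factor $\OOd(L^{2-2k})$ by \lref{lem:propsOO}(e), not $\OOd(L^{-2k})$, so the $\ell\ge 2$ tail is $\OOd(L^{5-4k})$ exactly as claimed (your $\OOd(L^{3-4k})$ overstates the smallness but does not affect the conclusion).
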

\begin{proof}
Since the only $\chi \x j$ that involves the site $1$ is $\chi\x{k-2}$, we find
\begin{equa}[eq:decompI1p1]
   I_1&= e^{\chi\x{k-2}} \cdots e^{\chi\x{0}}\tilde  I_1= e^{\chi\x{k-2}} \tilde I_1~,
\end{equa}
where it is understood that the argument of $\chi\x{j}$ is $\tilde x$. Since $\chi \x{k-2} = \OOd(L^{3-2k})$, both statements immediately follow.
\end{proof}

We will  need bounds on the time evolution of $\tilde x$.  Because of the changes of variables, the dissipation no longer acts only on the first rotator. To compute the effect of dissipation on each $\tilde x_a$, we need to understand how they depend on the original $I_1$, onto which the dissipation acts.

\begin{lemma}\label{lem:deriveewrtI1}
For $i=1, 2, \dots, n$, we have
\begin{equs}[eq:derphiitildei1]
	\frac{\partial  \tilde I_i}{\partial I_1} &= \delta_{1,i} +  \OOd(L^{2-2k})~,\\
	\frac{\partial  \tilde \phi_i}{\partial I_1} &= \OOd(L^{1-2k})~.
\end{equs}
\end{lemma}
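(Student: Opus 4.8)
The plan is to differentiate the explicit formulas \eref{eq:xtildefctx} for $\tilde x_a$ with respect to $I_1$ and track the orders of magnitude of the resulting terms. Recall that $\tilde I_i = e^{-\chi\x0}e^{-\chi\x1}\cdots e^{-\chi\x{k-2}} I_i$ and similarly for $\tilde\phi_i$, where each $\chi\x j = \OOd_{\NR}^\fin(L^{-2j-1})$ by \eref{e:chijNR}, and in particular each $\chi\x j = \OOd(L^{-1})$. Expanding these Lie series, one has $\tilde I_i = I_i + (\text{sum of nested Poisson brackets of }I_i\text{ with the }\chi\x j)$, with the leading correction being a single bracket $\poiss{I_i}{\chi\x j} = -\partial_{\phi_i}\chi\x j$; by \lref{lem:propsOO}~(g) the whole correction is $\OOd(L^{-1})$ as already noted in \eref{eq:Tildeiimin}. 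The point is now to apply $\partial_{I_1}$ to this expansion.

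First I would observe that $\partial_{I_1}$ acting on an admissible function $g = \OOd(L^s)$ gives $\partial_{I_1} g = \OOd(L^{s-1})$ by \lref{lem:propsOO}~(d). Applying $\partial_{I_1}$ termwise to the Lie series for $\tilde I_i$: the term $I_i$ contributes $\delta_{1,i}$; every other term is (a multiple nested bracket of) $\OOd$-functions built from the $\chi\x j$, and differentiating any such term in $I_1$ lowers its order by one. Since the correction $\tilde I_i - I_i$ is a sum (over $\ell\ge 1$ and over the various $\chi\x j$) of the form $\tfrac1{\ell!}\ad^\ell$ applied to functions, and the crudest bound on this correction using only $\chi\x j = \OOd(L^{-1})$ is $\OOd(L^{-1})$ — but more precisely it is dominated by the contributions involving $\chi\x{k-2}$, the one with the worst (largest) order touching site $1$. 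Actually, to get the stated $\OOd(L^{2-2k})$ one must be slightly more careful: only $\chi\x{k-2}$ involves the variable $\phi_1$ (as used in \lref{lem:lemp1}), and $\chi\x{k-2} = \OOd(L^{3-2k})$, so the full correction $\tilde I_i - I_i$, when differentiated in $I_1$, picks up only terms that either contain a factor $\partial_{I_1}$ hitting something of order $L^{3-2k}$ (giving $\OOd(L^{2-2k})$) or... — this is where I would need to check that differentiating in $I_1$ inside a nested bracket with $\chi\x j$, $j<k-2$, still produces something of order at most $L^{2-2k}$. The cleanest way: write $\tilde I_i - I_i = \sum_{\ell\ge 1}\frac1{\ell!}(\text{terms})$ and use that $\partial_{I_1}$ of the whole expression equals the Lie-series correction for a modified expansion; alternatively, invoke \cref{cor:psimoinsidcor}/ \lref{lem:propsOO}~(g) with the observation that only $\chi\x{k-2}$ carries $I_1$- and $\phi_1$-dependence, so $\partial_{I_1}(\tilde I_i - I_i)$ is itself an admissible function that vanishes when $\chi\x{k-2}$ is set to zero, hence is $\OOd$ of the order one gets by "extracting one $\chi\x{k-2}$", namely $\OOd(L^{3-2k}\cdot L^{-1}) = \OOd(L^{2-2k})$ — wait, one must be careful whether the extra $L^{-1}$ comes from the $\partial_{I_1}$ or is already counted; I would write $\partial_{I_1}\chi\x{k-2} = \OOd(L^{2-2k})$ by (d), and then argue each surviving term in $\partial_{I_1}(\tilde I_i - I_i)$ contains at least one factor $\partial_{I_1}\chi\x{k-2}$ or one factor $\chi\x{k-2}$ with the $\partial_{I_1}$ elsewhere, the latter being $\OOd(L^{-1}\cdot L^{3-2k}) = \OOd(L^{2-2k})$ as well since the remaining $\OOd(L^{-1})$-factor absorbs the lost derivative. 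This establishes the first line of \eref{eq:derphiitildei1}.

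For the second line, the same scheme applies to $\tilde\phi_i = e^{-\chi\x0}\cdots e^{-\chi\x{k-2}}\phi_i$, using \lref{lem:nonanalyticphii} in place of \lref{lem:propsOO}~(g) to handle the non-periodicity of $\phi_i$. Here $\phi_i$ itself does not depend on $I_1$, so $\partial_{I_1}\tilde\phi_i = \partial_{I_1}(\tilde\phi_i - \phi_i)$, and $\tilde\phi_i - \phi_i = \OOd(L^{-2})$ is again dominated by the $\chi\x{k-2}$-contributions (the worst term touching site $1$ being of order $L^{2-2k}$ rather than $L^{-2}$ by the refined count in \rref{rem:optimalpowers}, but for our purposes the relevant point is just that every term carrying $I_1$-dependence contains a factor from $\chi\x{k-2}$). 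Differentiating in $I_1$ costs one power of $L$, yielding $\partial_{I_1}\tilde\phi_i = \OOd(L^{1-2k})$. The main obstacle is purely bookkeeping: making rigorous the claim that, because only $\chi\x{k-2}$ among the generators depends on the site-$1$ variables, the $I_1$-derivative of the coordinate change is governed by $\chi\x{k-2} = \OOd(L^{3-2k})$ rather than by the generically larger $\OOd(L^{-1})$ generators; I would phrase this via the support observations already established (the dependence of $\chi\x j$ on sites $k-j-1,\dots,k+j+1$) together with \lref{lem:propsOO}~(g) applied after factoring out one occurrence of $\chi\x{k-2}$ or $\partial_{I_1}\chi\x{k-2}$.
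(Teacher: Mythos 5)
Your approach is essentially the same as the paper's: identify that among the generators $\chi\x 0,\dots,\chi\x{k-2}$ only $\chi\x{k-2}$ involves the site-$1$ variables (and $\chi\x{k-2}=\OOd(L^{3-2k})$), and then track what $\partial_{I_1}$ does to the Lie series. The underlying logic is sound and your conclusion is correct, but the exposition could be streamlined considerably: the paper simply observes that $\partial_{I_1}$ \emph{commutes} with $e^{-\chi\x0}\cdots e^{-\chi\x{k-3}}$ (these $\chi\x j$ are independent of $\phi_1$ and $I_1$, so the $i=1$ terms in each Poisson bracket vanish), writes $\partial_{I_1}\tilde I_i = e^{-\chi\x0}\cdots e^{-\chi\x{k-3}}\,\partial_{I_1}\bigl(e^{-\chi\x{k-2}}I_i\bigr)$, applies $e^{-\chi\x{k-2}}I_i = I_i + \OOd(L^{3-2k})$ and \lref{lem:propsOO}~(d), and is done; the case analysis you hedge over (``$\partial_{I_1}$ elsewhere'') never arises because, once you commute the outer exponentials past $\partial_{I_1}$, the derivative acts only on the one factor that carries $I_1$-dependence.
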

\begin{proof}
Since only $\chi\x{k-2}$ involves the site 1,
 and $\chi\x{k-2} = \OOd(L^{3-2k})$, we observe that \eref{eq:xtildefctx} leads to
\begin{equs}
	\frac{\partial  \tilde I_i}{\partial I_1} &=  e^{-\chi\x{0}}\cdots e^{-\chi\x{k-3}} \frac{\partial }{\partial I_1}\left(e^{-\chi\x{k-2}} I_i\right)\\
& =   e^{-\chi\x{0}}\cdots e^{-\chi\x{k-3}} \frac{\partial }{\partial I_1} [I_i + \OOd(L^{3-2k})]\\
& =  e^{-\chi\x{0}}\cdots e^{-\chi\x{k-3}} [\delta_{1,i} + \OOd(L^{2-2k})] = \delta_{1,i} + \OOd(L^{2-2k})~.
\end{equs}
(Recall that the Poisson bracket of  $\delta_{1,j}$ and any function is zero.)
For $\tilde \phi_i$ we find similarly, using \lref{lem:nonanalyticphii}, that
\begin{equs}
	\frac{\partial  \tilde \phi_i}{\partial I_1} &=  e^{-\chi\x{0}}\cdots e^{-\chi\x{k-3}} \frac{\partial }{\partial I_1}\left(e^{-\chi\x{k-2}} \phi_i\right)\\
& =   e^{-\chi\x{0}}\cdots e^{-\chi\x{k-3}} \frac{\partial }{\partial I_1} [\phi_i + \OOd(L^{2-2k})]\\
& =  e^{-\chi\x{0}}\cdots e^{-\chi\x{k-3}} [ 0 +\OOd(L^{1-2k})] = \OOd(L^{1-2k})~,
\end{equs}
which completes the proof.
\end{proof}

\begin{lemma}\label{lem:eqItilde}
We have:
\begin{equs}[eq:dynamicstildeItildephi]
	\dt  \tilde I_{i}& = -\frac{\partial f\x{0,\R}}{\partial \phi_i}(\tilde \bphi)+ (1-\delta_{i,k})\OOd_{\R}^\fin(L^{-2}) + \OOd(L^{3-2k})
 -\gamma \delta_{1,i} \tilde I_1~,\\
	\dt  \tilde \phi_{i} &= \tilde I_i + \OOd_{\R}^\fin(L^{-3}) + \OOd(L^{2-2k})~.
\end{equs}
\end{lemma}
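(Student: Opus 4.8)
The plan is to start from the exact dynamics \eref{eq:dynamicsTilde}, namely $\dt \tilde x_a = \poiss{\tilde x_a}{\tilde H} - \gamma I_1 \partial_{I_1}\tilde x_a$, and to evaluate the two contributions separately for $\tilde x_a = \tilde I_i$ and $\tilde x_a = \tilde \phi_i$. For the dissipative part, I would substitute \lref{lem:deriveewrtI1}: for $\tilde I_i$ one gets $-\gamma I_1 (\delta_{1,i} + \OOd(L^{2-2k}))$, and then replace $I_1$ using \lref{lem:lemp1} ($I_1 = \tilde I_1 + \OOd(L^{3-2k})$) together with the a priori bound $\tilde I_1 = \OOd(L^0)$ valid on the relevant domain (the sites $i\neq k$ have $\tilde I_i = \OOd(L^0)$ by \eref{eq:Tildeiimin} and $I_i = \OOd(L^0)$ there; note this is the point where we use that we are working in $\BB_{L,\rho}$-type domains rather than $\DD_{L,r,\sigma}$). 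This turns $-\gamma I_1(\delta_{1,i}+\OOd(L^{2-2k}))$ into $-\gamma\delta_{1,i}\tilde I_1 + \OOd(L^{3-2k})$, since $\delta_{1,i}\cdot\OOd(L^{3-2k})$, $\OOd(L^0)\cdot\OOd(L^{2-2k})$ are both $\OOd(L^{3-2k})$ (here $2-2k \le 3-2k$ so they merge). For $\tilde \phi_i$ the dissipative part is $-\gamma I_1 \cdot \OOd(L^{1-2k}) = \OOd(L^{2-2k})$, which is absorbed into the error term already present.

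For the Hamiltonian part, I would compute $\poiss{\tilde I_i}{\tilde H}$ and $\poiss{\tilde \phi_i}{\tilde H}$ using the expression \eref{eq:HtildeOR}, namely $\tilde H = \sum_j \tilde I_j^2/2 + f\x{0,\R}(\tilde\bphi) + \OOd_\R^\fin(L^{-2}) + \OOd(L^{2-2k})$. We have $\poiss{\tilde I_i}{\tilde H} = -\partial_{\tilde\phi_i}\tilde H = -\partial_{\phi_i}f\x{0,\R}(\tilde\bphi) + \OOd_\R^\fin(L^{-2}) + \OOd(L^{2-2k})$ (using \lref{lem:propsOO}(c) that $\partial_\phi$ preserves the order, and that a $\phi$-derivative of a resonant function stays resonant and finitely-supported). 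To get the factor $(1-\delta_{i,k})$ on the $\OOd_\R^\fin(L^{-2})$ term, I would invoke Observation~\ref{lem:support} / the table: the resonant remainder in \eref{eq:HtildeOR} of order $L^{-2}$ is $f\x{1,\R}$, which by the support analysis involves only the sites $k-2,\dots,k+2$ and, more importantly, since it is resonant it does not contain $\phi_k$ at all (by the definition of $f^\R$, $\mu_k = 0$), so $\partial_{\phi_k}$ kills it and only the $\OOd(L^{2-2k})$ term survives for $i=k$. Similarly, $\poiss{\tilde\phi_i}{\tilde H} = \partial_{\tilde I_i}\tilde H = \tilde I_i + \partial_{I_i}[\OOd_\R^\fin(L^{-2})] + \partial_{I_i}[\OOd(L^{2-2k})] = \tilde I_i + \OOd_\R^\fin(L^{-3}) + \OOd(L^{2-2k})$, using \lref{lem:propsOO}(d) that $\partial_{I_i}$ lowers the order by one (note $2-2k-1 = 1-2k < 2-2k$ so the $I$-derivative of the last term is still $\OOd(L^{2-2k})$, or one can just keep it as $\OOd(L^{1-2k}) \subset \OOd(L^{2-2k})$).

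Combining the Hamiltonian and dissipative contributions then gives exactly \eref{eq:dynamicstildeItildephi}: for $\tilde I_i$, $-\partial_{\phi_i}f\x{0,\R}(\tilde\bphi) + (1-\delta_{i,k})\OOd_\R^\fin(L^{-2}) + \OOd(L^{3-2k}) - \gamma\delta_{1,i}\tilde I_1$ (the $\OOd(L^{2-2k})$ from $\tilde H$ being absorbed into $\OOd(L^{3-2k})$... wait, $2-2k > 3-2k$, so actually $\OOd(L^{2-2k})$ dominates — I should double-check the bookkeeping here: since $2-2k \ge 3-2k$ is false, in fact $3-2k \le 2-2k$ is true for... $3-2k$ versus $2-2k$: $3-2k - (2-2k) = 1 > 0$, so $L^{3-2k} \ge L^{2-2k}$ for $L\ge 1$, hence $\OOd(L^{3-2k})$ is the weaker bound and absorbs $\OOd(L^{2-2k})$; good, consistent with the statement), and for $\tilde\phi_i$, $\tilde I_i + \OOd_\R^\fin(L^{-3}) + \OOd(L^{2-2k})$. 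The main obstacle, and the step requiring the most care, is the bookkeeping of which error terms are resonant-and-finitely-supported versus merely $\OOd$, and in particular justifying the $(1-\delta_{i,k})$ factor — this hinges on the structural fact, established through the inductive construction and Observations on supports, that every resonant remainder generated along the way is independent of $\phi_k$, so that it contributes nothing to the equation for $\tilde I_k$. The rest is routine application of \lref{lem:propsOO}, \lref{lem:nonanalyticphii}, \lref{lem:lemp1} and \lref{lem:deriveewrtI1}.
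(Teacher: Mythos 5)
Your proof is correct and follows essentially the same route as the paper: decompose $\dt\tilde x_a$ into the Hamiltonian bracket and the dissipative piece, evaluate the bracket from \eref{eq:HtildeOR}, handle the dissipative piece with \lref{lem:deriveewrtI1} and \lref{lem:lemp1}, and observe that the $\OOd_\R^\fin(L^{-2})$ remainder is independent of $\phi_k$ to produce the $(1-\delta_{i,k})$ factor.

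One conceptual slip worth flagging, although it is harmless here. You invoke a bound $\tilde I_1=\OOd(L^0)$ and remark that this ``is the point where we use that we are working in $\BB_{L,\rho}$-type domains rather than $\DD_{L,r,\sigma}$.'' That is backwards: the lemma's conclusion is phrased in $\OOd$-language, which by Definition~\ref{def:OOd} is tied to the complex domains $\DD_{L,r,\sigma}$, and there $\tilde I_i=\OOd(L^1)$ for \emph{all} $i$ (this is exactly the point made in Remark~\ref{rem:distsets} and restated after Definition~\ref{def:newOOs}). The $\OOs$-notation adapted to $\BB$-domains, where indeed $I_i=\OOs(L^0)$ for $i\neq k$, is only introduced later and cannot be used to derive an $\OOd$ bound. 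The reason your argument still closes is that the correct bound $\tilde I_1=\OOd(L^1)$ gives $\tilde I_1\cdot\OOd(L^{2-2k})=\OOd(L^{3-2k})$, which is precisely what the lemma asks for — and is the bound the paper's own proof uses. Indeed you implicitly use $I_1=\OOd(L^1)$ yourself in the $\tilde\phi_i$ line, where you write $-\gamma I_1\cdot\OOd(L^{1-2k})=\OOd(L^{2-2k})$. So the conclusion stands, but keep the $\OOd$/$\OOs$ distinction straight: $\BB$-domain estimates do not enter at this stage.
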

\begin{proof}
We have
\begin{equs}
\frac{\d}{\d t} \tilde  I_i &= \{\tilde I_i, \tilde H\}	 - \gamma I_1 \frac{\partial }{\partial I_1} \tilde I_i~.
\end{equs}
First we note that by \eref{eq:HtildeOR},
\begin{equs}
\{\tilde I_i, \tilde H\}	 &=  -\frac{\partial f\x{0,\R}}{\partial \phi_i}(\tilde \bphi)+ (1-\delta_{i,k})\OOd_{\R}^\fin(L^{-2}) + \OOd(L^{2-2k})~,
\end{equs}
where the $(1-\delta_{i,k})$ comes from the fact that $\partial_{\phi_k} \OOd_\R^\fin(L^{-2}) = 0$, since the remainder is resonant and hence independent of $\phi_k$.
Moreover, we get  by \eref{eq:derphiitildei1} and \lref{lem:lemp1} that
\begin{equs}
 I_1 \frac{\partial }{\partial I_1} \tilde I_i &= 	(\tilde I_1 + \OOd(L^{3-2k}))(\delta_{1,i} +  \OOd(L^{2-2k})) = \tilde I_1 \delta_{1,i} + \OOd(L^{3-2k})~.
\end{equs}
This completes the proof of the first line of \eref{eq:dynamicstildeItildephi}.

In the same way,
\begin{equs}
\frac{\d}{\d t} \tilde  \phi_i &= \{\tilde \phi _i, \tilde H\}	 - \gamma I_1 \frac{\partial }{\partial I_1} \tilde \phi_i~.
\end{equs}
Now, using again \eref{eq:HtildeOR} leads to
\begin{equs}
\{\tilde \phi_i, \tilde H\}	 &= \tilde I_i  + \OOd_\R^\fin(L^{-3}) + \OOd(L^{1-2k}) ~.
\end{equs}
Since by \lref{lem:lemp1} and \eref{eq:derphiitildei1},
\begin{equs}
 I_1 \frac{\partial }{\partial I_1} \tilde \phi_i &= 	(\tilde I_1 + \OOd(L^{3-2k})) \OOd(L^{1-2k}) = \OOd(L^{2-2k})~,
\end{equs}
the proof of the second line of \eref{eq:dynamicstildeItildephi} is complete.
\end{proof}

Note that since $f\x{0,\R}$ does not involve $\phi_k$ by construction, \eref{eq:dynamicstildeItildephi} implies that
\begin{equ}[eq:specialcasedtIk]
		\dt  \tilde I_{k} = \OOd(L^{3-2k}) ~.
\end{equ}

\subsection{Long-time stability}

In the sequel, $\alpha > 0$ and $T=\alpha L^{2k-3}$ are as in \tref{t:main}. We prove a statement
slightly stronger than part (i) of \tref{t:main}, since it applies to both $x(t)$ and $\tilde x(t)$ (such a generalization will be needed to prove  part (ii) of \tref{t:main}).

\begin{proposition}\label{p:wasGronwall}Let $\rho>0$. Then, there exist constants $L_0, \rho^*>0$ such that for all
$L\geq L_0$ and all initial conditions $x(0) \in \BB_{L,\rho}$ we have both
\begin{equs}
x(t) \in \BB_{L, \rho^*}, \quad \text{and} \quad \tilde x(t) \in \BB_{L, \rho^*}~, \quad 0\leq t \leq T~.
\end{equs}

\end{proposition}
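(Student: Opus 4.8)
The plan is a continuity (bootstrap) argument carried out in the transformed coordinates $\tilde x$. Since by \lref{lem:psimoinsid} both $\Psi$ and $\Psi^{-1}$ displace real points by only $\OOd(L^{-1})$, it is enough to show that $\tilde x(t)\in\BB_{L,\rho^*/2}$ on $[0,T]$ for a suitable $\rho^*$; this then gives $\tilde x(t)\in\BB_{L,\rho^*}$ and, via \lref{lem:psimoinsid}, also $x(t)=\Psi(\tilde x(t))\in\BB_{L,\rho^*}$. I would first fix $r,\sigma$ small enough that all the $\OOd$-estimates of \sref{sec:Notationstools} and \sref{sec:k1} hold on $\DD_{L,r,\sigma}$, then choose $\rho^*$, and only then $L_0$ (so that in particular $\overline{\BB_{L,\rho^*}}\subset\DD_{L,r,\sigma}$ by \rref{rem:inclusionBBDD}). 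Set
\[
\tau^*=\sup\bigl\{\,t\in[0,T]:\tilde x(s)\in\BB_{L,\rho^*}\ \text{for all}\ s\in[0,t]\,\bigr\}.
\]
The flow is globally defined because $\dt H=-\gamma I_1^2\le0$ confines $x(t)$ to a fixed compact set, and $\tilde x(0)\in\BB_{L,\rho+\OOd(L^{-1})}\subset\BB_{L,\rho^*}$ by \eref{eq:Tildeiimin}--\eref{eq:Tildephiimin}; hence $\tau^*>0$, and the goal is to rule out $\tau^*<T$.

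On $[0,\tau^*]$ the estimate rests on two ingredients. First, $\tilde H$ is non-increasing along the flow: since $\tilde H\circ\Psi^{-1}=H$ we have $\dt\bigl(\tilde H(\tilde x(t))\bigr)=\dt\bigl(H(x(t))\bigr)=-\gamma I_1(t)^2\le0$. Second — and this is the essential point — on the \emph{real} set $\BB_{L,\rho^*}$ the action $I_1=\tilde I_1+P_1$ is bounded by a constant rather than by $\OOd(L)$, so that $|\gamma I_1\,\partial_{I_1}\tilde I_k|\le C(1+\rho^*)L^{2-2k}$ by \lref{lem:deriveewrtI1}; together with $|\poiss{\tilde I_k}{\tilde H}|=|\partial_{\phi_k}\tilde H|\le CL^{2-2k}$ — which holds because by \eref{eq:HtildeOR} neither $f\x{0,\R}$ nor the resonant remainders depend on $\phi_k$ — this upgrades \eref{eq:specialcasedtIk} to $|\dt\tilde I_k|\le C(1+\rho^*)L^{2-2k}$ on $\BB_{L,\rho^*}$. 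Now set $G:=\tilde H-\half\tilde I_k^2$, which by \eref{eq:HtildeOR} equals $\sum_{i\ne k}\half\tilde I_i^2+f\x{0,\R}(\tilde\bphi)+\OOd_{\R}^\fin(L^{-2})+\OOd(L^{2-2k})$. Using the two ingredients,
\[
\dt G=-\gamma I_1^2-\tilde I_k\,\dt\tilde I_k\le (L+\rho^*)\cdot C(1+\rho^*)L^{2-2k}\le 2C(1+\rho^*)L^{3-2k},
\]
so integrating over $[0,t]\subset[0,\alpha L^{2k-3}]$ gives $G(t)\le G(0)+2C\alpha(1+\rho^*)$. Since $\tilde x(0)\in\BB_{L,\rho+\OOd(L^{-1})}$, we get $G(0)\le\half(n-1)(\rho+1)^2+C_0+1=:G_0$, where $C_0$ bounds $|f\x{0,\R}|$ on $\torus^n$ and $G_0$ is \emph{independent of} $\rho^*$. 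As the non-quadratic part of $G$ is bounded by $C_0+1$, we conclude $\sum_{i\ne k}\tilde I_i(t)^2\le 2(G_0+C_0+1)+4C\alpha(1+\rho^*)$ on $[0,\tau^*]$, while integrating $\dt\tilde I_k$ yields $|\tilde I_k(t)-L|\le\rho+\OOd(L^{-1})+C(1+\rho^*)\alpha L^{-1}$.

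It remains to close the bootstrap by making the choices in the right order. Choose $\rho^*$ large (depending only on $\rho,\alpha,n,\gamma$ and the potentials) so that $2(G_0+C_0+1)+4C\alpha(1+\rho^*)\le(\rho^*/2)^2$ — possible because the left-hand side is affine and the right-hand side quadratic in $\rho^*$ — and also $\rho^*\ge2\rho+2$; then take $L_0$ large (now allowed to depend on $\rho^*$) so that the corrections $\OOd(L^{-1})$ and $C(1+\rho^*)\alpha L^{-1}$ above are each $\le1$ and $\overline{\BB_{L,\rho^*}}\subset\DD_{L,r,\sigma}$. For $L\ge L_0$ we then obtain, on $[0,\tau^*]$, $|\tilde I_i(t)|\le\rho^*/2$ for $i\ne k$ and $|\tilde I_k(t)-L|\le\rho+1\le\rho^*/2$, i.e.\ $\tilde x(t)\in\overline{\BB_{L,\rho^*/2}}$. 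If $\tau^*<T$, continuity of $\tilde x(\cdot)$ would force $\tilde x(\tau^*)\in\partial\BB_{L,\rho^*}$, contradicting $\tilde x(\tau^*)\in\overline{\BB_{L,\rho^*/2}}$; hence $\tau^*=T$ and in fact $\tilde x(t)\in\BB_{L,\rho^*/2}$ throughout $[0,T]$, whence $x(t)=\Psi(\tilde x(t))\in\BB_{L,\rho^*}$ by \lref{lem:psimoinsid}.

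The one genuinely non-routine step is the second ingredient: recognizing that $\dt\tilde I_k$ is of order $L^{2-2k}$, not merely $L^{3-2k}$ as in \eref{eq:specialcasedtIk}, once one works on the bounded real domain where $I_1=\OOd(1)$. With only $L^{3-2k}$, the bound on $G(t)-G(0)$ would degrade to $\OOd(L)$, and the slow actions $\tilde I_i$, $i\ne k$, would be controlled only up to $\OOd(L^{1/2})$ — far from the uniform bound the statement requires. Everything else is bookkeeping; the only subtlety is to order the choices $r,\sigma\to\rho^*\to L_0$ correctly, so that the $\rho^*$-dependence of the error terms does not obstruct closing the bootstrap.
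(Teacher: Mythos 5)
Your proof is correct, and it takes the same overall route as the paper: the modified energy $\tilde H_* = \tilde H - \tfrac12 \tilde I_k^2$ (your $G$), the separate $L^{3-2k}$ bounds on $\dt \tilde H_*$ and on $\dt \tilde I_k$, the first-exit-time/bootstrap argument with the order of choices $r,\sigma \to \rho^* \to L_0$, and the transfer from $\tilde x$ to $x$ via \lref{lem:psimoinsid}.

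There is one genuine point of divergence worth noting, and it is exactly the step you flag as "the one genuinely non-routine step." The quantity to control is
\[
\dt \tilde H_* \;=\; -\gamma I_1^2 \;+\; \tilde I_k\,\partial_{\phi_k}\tilde H \;+\; \gamma\,\tilde I_k\, I_1\, \partial_{I_1}\tilde I_k ,
\]
and the difficulty is the cross term $\gamma\,\tilde I_k I_1 \partial_{I_1}\tilde I_k$, since a na\"ive $\OOd$-count gives $L \cdot L \cdot L^{2-2k} = L^{4-2k}$, which after integration over $[0,T]$ with $T\sim L^{2k-3}$ would cost $\OOd(L)$. You resolve this by dropping $-\gamma I_1^2 \le 0$ and using the \emph{real-domain} bound $|I_1|\le\rho^*$ to upgrade $\dt\tilde I_k$ from $\OOd(L^{3-2k})$ to $\OOs(L^{2-2k})$; the price is that your constant now grows linearly in $\rho^*$, so you need the affine-vs.-quadratic observation to close the bootstrap. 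The paper keeps $-\gamma I_1^2$ and absorbs the cross term by Young's inequality,
\[
\gamma\,\tilde I_k I_1 \partial_{I_1}\tilde I_k - \gamma I_1^2 \;\le\; \tfrac{\gamma}{4}\bigl(\tilde I_k\,\partial_{I_1}\tilde I_k\bigr)^2 = \OOd(L^{6-4k}) ,
\]
which never invokes the real bound on $I_1$ and yields a constant independent of $\rho^*$, making the final choice of $\rho^*$ a simple explicit formula rather than a solution of an inequality. Both fixes address the same obstruction and are equally valid; yours is perhaps more elementary (no Young), while the paper's is slightly cleaner in how the constants depend on $\rho^*$.

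Two very minor bookkeeping remarks. First, after requiring $\OOd(L^{-1}) \le 1$ and $C(1+\rho^*)\alpha L^{-1}\le1$, your bound on $|\tilde I_k(t)-L|$ is $\rho+2$ rather than $\rho+1$, so you should take $\rho^*\ge 2\rho+4$ (or ask that the two corrections sum to $\le 1$). Second, the step $(L+\rho^*)\le 2L$ requires $L\ge\rho^*$, which is of course guaranteed once $L_0$ is chosen after $\rho^*$ — it is just worth making explicit given how carefully you track the order of quantifiers. Neither affects the validity of the argument.
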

\begin{proof}
We recall that since we consider real initial conditions $x\in \BB_{L, \rho}$, then both $x(t)$ and $\tilde x(t)$ remain real for as long as they are defined. In particular, we can determine whether the orbit remains in $\BB_{L, \rho^*}$ by just tracking the behavior of the action variables.

We define
\begin{equs}[eq:Htildem]
\tilde H_* = \tilde H - \frac{\tilde I_k^2}2 = \sum_{i\neq k} \frac{\tilde I_i^2}{2} + \OOd(L^0)~.
\end{equs}

Since $\frac{\d}{\d t} \tilde H = -\gamma I_1^2$, we have by \eref{eq:dynamicsTilde}
\begin{equs}[eq:derhtildet]
	\frac{\d}{\d t} \tilde H_* &=  -\frac{\d}{\d t}\frac{\tilde I_k^2}2 - \gamma I_1^2 = -\tilde I_k \left(\frac{\partial \tilde H}{\partial { \phi_k}}(\tilde x) - \gamma I_1 	\frac{\partial  \tilde I_k}{\partial I_1}  \right) - \gamma I_1^2\\
& \leq -\tilde I_k  \frac{\partial \tilde H}{\partial { \phi_k}}(\tilde x) + \frac \gamma 4 	\left(\tilde I_k\frac{\partial  \tilde I_k}{\partial I_1}  \right)^2\\
&= \tilde I_k \OOd (L^{2-2k}) + \tilde I_k^2 \OOd (L^{4-4k})  = \OOd(L^{3-2k})~,
\end{equs}
where the second line follows from Young's inequality,
and the third line follows from \lref{lem:deriveewrtI1}. (Note that the $\OOd (L^{2-2k}) $ in the last line comes from the fact that in \eref{eq:HtildeOR}, the resonant terms do not depend on $\phi_k$ by definition.)

By \eref{eq:Htildem}, \eref{eq:derhtildet} and \eref{eq:specialcasedtIk}, we obtain that  there exist $r, \sigma, C>0$ such that
for all large enough $L$, we have on the set $\DD_{L, r, \sigma}$\,,
\begin{equs}[eq:estimatestst]
&0 \leq  \sum_{i\neq k} \frac{\tilde I_i^2}{2} \leq  \tilde H_* + C ~, \\
&	\frac{\d}{\d t} \tilde H_* \leq C L^{3-2k}~,\\
&	\left|\frac{\d}{\d t} \tilde I_k \right|  \leq C L^{3-2k}~.
\end{equs}
Now, provided $L$ is large enough, we have $\tilde x(0) \in \BB_{L, 2\rho}$ by \lref{lem:psimoinsid}.
Possibly increasing the value of $C$, we further require $L$ to be large enough so that $$\sup_{x\in \BB_{L, 2\rho}} \tilde H_*(x) \leq C~,$$
which is possible by \rref{rem:inclusionBBDD} and the definition of $\tilde H_*$.
Choose now
\begin{equ}
	\rho^* > 2\max( 2\rho + \alpha C, \sqrt{2(2+\alpha)C})~.
\end{equ}
By further restricting the allowed values of $L$, we have
by \rref{rem:inclusionBBDD}
that $\BB_{L, \rho^*} \subset \DD_{L, r, \sigma}$.
Consider now the first exit time
\begin{equ}
	t^* = \inf\{t \geq 0 ~:~ \tilde x(t) \notin \BB_{L, \rho^*} \}~,
\end{equ}
with the convention $\inf \{\emptyset\}=\infty$.
For all $t\leq t^*$ the estimates in \eref{eq:estimatestst} hold,
and thus  for all $t\leq \min(T, t^*)$ we have
\begin{equs}
	|\tilde I_k(t) - L| \leq |\tilde I_k(0)-L| + tC L^{3-2k} \leq  2\rho + \alpha C < \frac{\rho^*}2~,
\end{equs}
and
\begin{equs}
	\tilde H_*(\tilde x(t)) \leq \tilde H_*(\tilde x(0)) +  tC L^{3-2k} \leq (1+\alpha)C~,
\end{equs}
which further implies that for all $i\neq k$,
\begin{equs}
	|\tilde I_i(t)| \leq \sqrt{2(\tilde H_*(\tilde x(t))+C)} \leq \sqrt{2(2+\alpha)C} < \frac{\rho^*}2~.
\end{equs}

This implies that for all $t\leq \min(T, t^*)$ we have $\tilde x(t) \in \BB_{L, \rho^*/2}$.
By continuity of $\tilde x(t)$ with respect to $t$, this implies that $t^* > T$, so that actually
$\tilde x(t) \in \BB_{L, \rho^*/2}$ for all $t\leq T$. But then by \lref{lem:psimoinsid},
we conclude that for all large enough $L$ and all $t\leq T$, $x(t) \in \BB_{L, \rho^*}$.
This completes the proof.
\end{proof}

\section{Estimating the dissipation}\label{sec:estimatingdiss}

We now fix once and for all $\rho, \alpha > 0$ as in \tref{t:main}.
We fix also $\rho^*$
as in  \pref{p:wasGronwall}, so that for all large enough $L$, and all $x(0) \in \BB_{L, \rho}$,
\begin{equ}[eq:remaininBLrp]
	x(t) \in \BB_{L, \rho^*}, \quad \tilde 	x(t) \in \BB_{L, \rho^*}, \quad 0 \leq t \leq T \equiv \alpha L^{2k-3}~.
\end{equ}

\noindent {\bf Convention.} In the sequel, and without further mention,
we consider only initial conditions $x(0) \in \BB_{L, \rho}$,
where $L$ is large enough so that \eref{eq:remaininBLrp} holds.

We will at several (but finitely many) occasions
increase the lower bound on the allowed values for $L$.
With this in mind,  we now introduce a weaker notion
of $\OOs$, which we will use alongside
the $\OOd$ introduced in \dref{def:OOd}  (recall also \rref{rem:distsets}).
\begin{definition}\Label{def:newOOs}
Given a function $f$ defined on $\BB_{L, \rho^*}$ for
all large enough $L$, we say that $f=\OOs(L^s)$ if there exists a constant
$c$ such that for all large enough $L$,
we have $\sup_{x\in \BB_{L, \rho^*}}|f(x)| \leq c L^s$. The notations $\R, \NR$ and $\fin$ bare the same meaning as in \dref{def:OOdRNRfin}.
\end{definition}

We shall sometimes also write $\OOs(T^r L^s)$, which means $\OOs(L^{s + r(2k-3)})$ since
$T = \alpha L^{2k-3}$. Moreover, with \eref{eq:remaininBLrp} in mind,
we extend this notation to functionals of the trajectory:
given $f = \OOs(L^s)$, we write for example $f(x(t)) = \OOs(L^s)$ when $t\leq T$, or
$\int_0^T f(x(t)) \d t = \OOs(T L^s)$.

By \rref{rem:inclusionBBDD}, any $f$ that is $\OOd(L^s)$ is also $\OOs(L^s)$.
The main difference between $\OOd$ and $\OOs$ is that for all $i\neq k$ we have
\begin{equs}
	I_i = \OOs(L^0)~,
\end{equs}
whereas we had $I_i = \OOd(L)$.

We want to estimate $H(x(T))-H(x(0))$ when $T =  \alpha L^{2k-3}$, with
$\alpha >0$.
Since the dissipation happens only in the variable $I_1$, we have
\begin{equ}\label{e:timeintegral}
  H(x (T))-H(x(0))= -\gamma \int_0^T I_1^2(t) \,\d t~.
\end{equ}
In the new variables and using \lref{lem:lemp1}, \eref{e:timeintegral} takes the form
\begin{equa}[e:dissipative]
  H(x(T))-H(x(0))&=-\gamma \int_0^T {{\tilde I_1^2 }}(t) \, \d t\\
&\quad -\gamma
  \int_0^T P_1^2 (\tilde x(t))\,\d t\\
&\quad -2\gamma \int_0^T \tilde  I_1 (t)\,   P_1(\tilde x(t))\,\d t~.
\end{equa}

We can say the following about the three contributions above.
\begin{itemize}
	\item The first integral has a negative sign, but our analysis gives little control over it. Our numerical experiments indicate that the integrand is only significant for a short, initial transient, and that its contribution to the dissipation in the quasi-stationary state is negligible.
\item The second integral also has a negative sign, and is of order $\OOs(TL^{6-4k}) = \OOs(L^{3-2k})$. This is where
the dissipation rate in \tref{t:main} (ii) comes from. In \sref{sec:dissipnotzero} we will show
that under Assumptions \ref{d:22} and \ref{d:23}, the integral $\int_0^T P_1^2(\tilde{x}(t))\, \d t$
is bounded {\em below} by a constant times $L^{3-2k}$, which is crucial.
\item The third integral has no sign, and naive dimensional analysis suggests
it is $\OOs(TL^{3-2k}) = \OOs(L^0)$.
However, due to its oscillatory nature, this integral
is in fact much smaller.
In \sref{sec:partii}, we use integration by parts (homogenization)
to show that it
can essentially be reduced to boundary terms of order $\OOs(L^{2-2k})$, plus
higher order corrections, so that it is negligible compared to the first two integrals in \eref{e:dissipative}.
\end{itemize}

\begin{remark}\label{rem:choiceT}The choice $T\sim L^{2k-3}$ is made so that the second integral in \eref{e:dissipative},
which scales like $TL^{6-4k}$, dominates the boundary terms of order  $L^{2-2k}$ coming from the third
integral.
\end{remark}

The next two subsections, which deal respectively with the second and third terms in \eref{e:dissipative},
will lead to the proof of \tref{t:main}.

\subsection{Dissipation is not zero}\label{sec:dissipnotzero}

The main result of this subsection is the following lower bound on the
absolute value of the second term in \eref{e:dissipative}:
\begin{proposition}\label{p:p1not0}
  There exists a constant $c>0$ such that for all $L$ large enough,
  and $T=\alpha L^{2k-3}$,
  \begin{equ}[eq:integralep1sq]
    \int_0^T P^2_1(\tilde x(t))\, \d t  \geq  \frac{\alpha c}{L^{2k-3}}~.
  \end{equ}
\end{proposition}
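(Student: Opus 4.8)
The plan is to show that $P_1(\tilde x(t))$ cannot stay close to zero for an extended period of time, and that therefore its square integrates to something of order at least $T L^{6-4k} = \alpha c L^{3-2k}$. The starting point is \lref{lem:lemp1}, which gives the crucial decomposition
\begin{equs}
P_1 = -\partial_{\phi_1} \chi\x{k-2} + \OOd(L^{5-4k})~.
\end{equs}
Since $\chi\x{k-2} = Q f\x{k-2}$ and $f\x{k-2} = \OOd(L^{2-2k})$, the leading term $-\partial_{\phi_1}\chi\x{k-2}$ is genuinely of order $L^{3-2k}$, and the correction is of strictly smaller order $L^{5-4k} \ll L^{3-2k}$ (using $k \geq 2$). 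So it suffices to bound $\int_0^T (\partial_{\phi_1}\chi\x{k-2}(\tilde x(t)))^2\,\d t$ from below by a constant times $L^{6-4k}\cdot T$; the cross term and the square of the $\OOd(L^{5-4k})$ correction are then absorbed by Cauchy--Schwarz and are of lower order.

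The heart of the matter is that $\partial_{\phi_1}\chi\x{k-2}$, rescaled by $L^{2k-3}$, is (to leading order) a fixed, $L$-independent trigonometric function of the angles $\tilde\bphi$ that is \emph{not identically zero} on the relevant region of phase space --- and this is exactly where \aref{d:23} (non-degeneracy) enters. Concretely, tracing back the construction, the dominant contribution to $\chi\x{k-2}$ comes from applying $Q$ to the piece of $f\x{k-2}$ that couples site $1$ to site $k$; this piece is built from iterated Poisson brackets involving the potentials $U_1, \dots, U_{k-1}$ and the factors $1/(I_k\mu_k + \dots)$ coming from earlier applications of $Q$. I would argue that, after multiplying by $L^{2k-3}$, $\partial_{\phi_1}\chi\x{k-2}$ converges (uniformly on $\BB_{L,\rho^*}$, as $L\to\infty$) to an explicit expression of the form $c_0\, U_1'(\tilde\phi_2 - \tilde\phi_1)\cdot(\text{product of }U_j'' \text{ or }U_j'\text{ factors})/\langle I_k\rangle^{2k-3}$ --- a function which, by \aref{d:23} (the interaction force and its derivative never vanish simultaneously) combined with \aref{d:22}, does not vanish identically and in fact has a strictly positive time-average along any trajectory that stays in $\BB_{L,\rho^*}$. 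More robustly, since the $\tilde\phi_i$ for $i<k$ evolve (by \lref{lem:eqItilde}) approximately according to the dynamics of a $(k-1)$-rotator chain with the non-degenerate potentials $U_1, \dots, U_{k-1}$, the angle differences sweep through a set on which this limiting function is bounded away from zero for a definite fraction of any time interval of length $\mathcal{O}(1)$; integrating over $T$ intervals of unit length then yields the lower bound $\int_0^T P_1^2 \geq \alpha c L^{6-4k}\cdot L^{2k-3} = \alpha c L^{3-2k}$.

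The main obstacle, and the step requiring the most care, is making rigorous the claim that the (rescaled) leading term of $P_1$ has a time-average bounded below by a positive constant, uniformly in $L$ and uniformly over all admissible initial conditions. One cannot simply say ``a nonzero trigonometric polynomial has positive $L^2$-average,'' because the trajectory $\tilde\bphi(t)$ might in principle linger near the zero set of that polynomial. I would handle this by exploiting \aref{d:23} more quantitatively: the zero set of $U_1'$ is finite and, at each such zero, $U_1''\neq 0$, so the leading term of $P_1$ vanishes only on a codimension-$\geq 1$ set which the slow dynamics cannot follow (it would require $\dt\tilde\phi_1 \approx \dt\tilde\phi_2$, i.e. $\tilde I_1 \approx \tilde I_2$, persistently, which is incompatible with the dissipative $(k-1)$-chain dynamics unless the whole slow subsystem sits at a force equilibrium --- but at such an equilibrium the \emph{other} factors built from $U_2'', \dots$ take over, and non-degeneracy again forbids simultaneous vanishing). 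This ``non-degeneracy propagates through the chain'' argument is the delicate point; the rest is bookkeeping with the $\OOs$-calculus and Cauchy--Schwarz. (The companion \sref{sec:nondeg} presumably shows that without \aref{d:23} this lower bound genuinely fails, which is consistent with the difficulty being concentrated here.)
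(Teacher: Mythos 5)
Your high-level plan matches the paper's: reduce $P_1$ to a leading term of order $L^{3-2k}$, show that this leading term (after rescaling) is a fixed $L$-independent function of the angles whose time-average is bounded below, and sum over unit intervals. But there are several genuine gaps between the plan and a working proof.

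First, the explicit form of the leading term is not what you guess. The paper shows (Lemma~\ref{lem:realboundP1}, via \eref{eq:P1pmUUU} and \eref{eq:defM1}) that
\begin{equs}
P_1 = \frac{M_1(\bphi)}{L^{2k-3}} + \OOs(L^{2-2k})~, \qquad M_1 = -\,U_1''\, U_2'' \cdots U_{k-2}''\, G~,
\end{equs}
where $G$ is the $(2k-4)$-fold $\phi_k$-primitive of $U_{k-1}$. Every factor $U_1,\dots,U_{k-2}$ enters through its \emph{second} derivative (the $\partial_{\phi_1}$ in $-\partial_{\phi_1}\chi\x{k-2}$ supplies the last one, and the chain of Poisson brackets supplies the rest), and the $U_{k-1}$-dependence appears not as a force but as a high-order antiderivative $G(\phi_k)$. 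Your guess of ``$U_1'(\tilde\phi_2-\tilde\phi_1)$ times a product of $U_j''$ or $U_j'$ factors'' is therefore off in a way that directly affects your non-vanishing argument: you reason about the zero set of $U_1'$, but the relevant set is the zero set of the $U_i''$.

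Second, and more importantly, you have overlooked the fast variable: $M_1$ contains $G(\phi_k)$, and $\phi_k$ rotates with frequency $\sim L$. So $M_1^2$ is a rapidly oscillating quantity, and bounding $\int_0^1 M_1^2\,\d t$ below is not a matter of ``the angles sweep through a set where it is bounded away from zero'' --- that would fail if the rapid average $\langle G^2\rangle$ were zero. The paper writes $M_1^2 = K^2\bigl(\langle G^2\rangle + \psi'(\phi_k)\bigr)$ with $K = U_1''\cdots U_{k-2}''$, integrates the $\psi'$ part by parts (using that $\phi_k$ is essentially linear in $t$), and reduces the problem to $\langle G^2\rangle\int_0^1 K^2\,\d t$ with $\langle G^2\rangle>0$ because $U_{k-1}$ is non-constant. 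This averaging step has no counterpart in your argument.

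Third, your argument that the slow variables cannot linger where the leading term vanishes is heuristic in exactly the place where rigor is needed. The paper's argument is a clean contradiction: one passes to the decoupled dynamics \eref{eq:ODEdec1}, uses real-analyticity in time (Lemma~\ref{lem:solutionanalytique}) plus compactness of $\overline{\BB}_{0,\rho^*}$ to reduce to showing $K(t)\not\equiv 0$; if $K\equiv 0$, analyticity and the finitely-many-Fourier-modes assumption force some $\overline\phi_{\ell+1}-\overline\phi_\ell$ to be constant, whence by Assumption~\ref{d:23} the force $U_\ell'$ on the boundary of a slow subsystem is a nonzero constant, so that subsystem's momentum and energy would grow without bound --- contradicting the fact that its energy is non-increasing. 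Your appeal to ``$\tilde I_1\approx\tilde I_2$ persistently is incompatible with the dissipative chain'' is a plausibility argument, not a proof, and it is aimed at the wrong factor ($U_1'$ rather than $U_i''$). You also need, as the paper supplies in Proposition~\ref{prop:intM1sqrealdyn}, an explicit $\OOs(L^{-2})$-comparison between the true $\tilde x$-dynamics and the decoupled one over each unit interval, plus uniform continuity of $M_1$, to transfer the lower bound back to the real trajectory.

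In short: right strategy, but the explicit formula for the leading term is wrong, the fast-averaging step via integration by parts is missing, and the non-vanishing step needs the analyticity-plus-constant-force contradiction rather than a genericity heuristic.
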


Before we start with the proof of \pref{p:p1not0}, we need
some auxiliary material.

Recalling the definition of $Q$ in \eref{eq:defQ}, we immediately have
\begin{equs}[eq:commutatdT]
\partial_{I_i} (Q f) & =-\partial_{I_i} \left(\i\sum_{\bmm\in\NN^{\NR}}
  \frac{f_\bmm(\bI)}{\IM}e^{\i \,\mphi}\right)	\\
& =\i\sum_{\bmm\in\NN^{\NR}}
  \left(-\frac{\partial_{I_i} f_\bmm(\bI)}{\IM} + \frac{\mu_i f_\bmm(\bI)}{(\IM)^2}\right)e^{\i \,\mphi}\\
& = (Q \partial_{I_i} - Q^2 \partial_{\phi_i}) f~.
\end{equs}
In the sequel, we often omit the argument $\phi_{j+1}-\phi_{j}$ of $U_j$ and its derivatives.

\begin{lemma}\label{eq:commfj}
We have
\begin{equs}[eq:derphifj]
\partial_{\phi_{k-j-1}} f\x j = U''_{k-j-1} Q^2 \partial_{\phi_{k-j}} f\x {j-1} + \OOd^\fin(L^{-2j-1})~.
\end{equs}
\end{lemma}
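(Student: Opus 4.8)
The plan is to single out, in the series \eref{e:fr} defining $f\x{j}$ (or \eref{e:fr1} when $j=1$), the one term that carries both the dependence on $\phi_{k-j-1}$ and the full order $L^{-2j}$ — namely the $\ell=1$ term — to identify it with $U''_{k-j-1}Q^2\partial_{\phi_{k-j}}f\x{j-1}$, and to absorb everything else into $\OOd^\fin(L^{-2j-1})$. Write $f\x{j}=\sum_{\ell=1}^{N_*-1}\frac1{\ell!}\ad_{\chi\x{j-1}}^\ell G_\ell$, with $G_\ell$ the first argument of $\ad_{\chi\x{j-1}}^\ell$ in \eref{e:fr}; recall that $f\x{0,\R}$ — and in particular the two-body potential $U_{k-j-1}(\phi_{k-j}-\phi_{k-j-1})$ — is one of the summands of $G_\ell$. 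Throughout we take $1\le j\le k-2$, so that site $k-j-1$ is meaningful and $U_{k-j-1}$ actually occurs in $f\x{0,\R}$.

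First I would dispose of the terms with $\ell\ge2$. We have $\chi\x{j-1}=Qf\x{j-1}=\OOd(L^{1-2j})$ by \eref{e:chij}, with finitely many (non-resonant) Fourier modes, and $G_\ell=\OOd(L^0)$ (dominated by $f\x{0,\R}$). Hence \lref{lem:propsOO}(f) gives $\ad_{\chi\x{j-1}}^\ell G_\ell=\OOd(L^{-2j\ell})$, again with finitely many modes, and for $\ell\ge2$, $j\ge1$ this is $\OOd(L^{-4j})\subseteq\OOd(L^{-2j-1})$. Since $\partial_{\phi_{k-j-1}}$ preserves both the order (\lref{lem:propsOO}(c)) and the finiteness of the Fourier support, $\sum_{\ell\ge2}\frac1{\ell!}\partial_{\phi_{k-j-1}}\ad_{\chi\x{j-1}}^\ell G_\ell=\OOd^\fin(L^{-2j-1})$, and it remains to compute $\partial_{\phi_{k-j-1}}\{G_1,\chi\x{j-1}\}$.

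Here I would exploit the support structure. The function $\chi\x{j-1}=Qf\x{j-1}$ depends only on the variables $(I_i,\phi_i)$ with $i\in\{k-j,\dots,k+j\}$, so all its $\phi_{k-j-1}$-derivatives vanish. Expanding $\{G_1,\chi\x{j-1}\}=\sum_i\bigl(\partial_{\phi_i}G_1\,\partial_{I_i}\chi\x{j-1}-\partial_{I_i}G_1\,\partial_{\phi_i}\chi\x{j-1}\bigr)$ and differentiating in $\phi_{k-j-1}$, only the terms in which $\partial_{\phi_{k-j-1}}$ falls on $G_1$ survive, the sum being restricted to $i\in\{k-j,\dots,k+j\}$. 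A direct inspection of the summands of $G_1$, using the observations of \sref{sec:general} on the supports of the $f\x{m}$, shows that the only one contributing is $U_{k-j-1}(\phi_{k-j}-\phi_{k-j-1})$ from $f\x{0,\R}$: the other summands either do not depend on $\phi_{k-j-1}$ at all, or — like $U_{k-j-2}$ — couple $\phi_{k-j-1}$ only to sites outside $\{k-j,\dots,k+j\}$, so their contribution to the restricted sum vanishes. Since moreover $\partial_{\phi_{k-j-1}}G_1$ involves no action variable, one is left with $\partial_{\phi_{k-j-1}}\{G_1,\chi\x{j-1}\}=-U''_{k-j-1}\,\partial_{I_{k-j}}\chi\x{j-1}$.

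Finally I would rewrite $\partial_{I_{k-j}}\chi\x{j-1}=\partial_{I_{k-j}}(Qf\x{j-1})=Q\partial_{I_{k-j}}f\x{j-1}-Q^2\partial_{\phi_{k-j}}f\x{j-1}$ via \eref{eq:commutatdT}. The crucial input — and the step I expect to be the main obstacle — is that $f\x{j-1}$ does not depend on the action $I_{k-j}$. This does \emph{not} follow from the support observations of \sref{sec:general} as stated (which already allow the action $I_{k-j-1}$ into $f\x{j}$), but from a refinement of their proof in which one tracks whether each site enters through its angle or its action: one checks inductively that $f\x{m}$ involves the action variables only of sites $k-m,\dots,k+m$ — it is $\chi\x{m}=Qf\x{m}$ that first brings in action-dependence on the sites $k-m-1$ and $k+m+1$, through the small denominators $\bI\cdot\bmm$ — so $\partial_{I_{k-j}}f\x{j-1}=0$. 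Then $Q\partial_{I_{k-j}}f\x{j-1}=0$, $\partial_{I_{k-j}}\chi\x{j-1}=-Q^2\partial_{\phi_{k-j}}f\x{j-1}$, and therefore $\partial_{\phi_{k-j-1}}\{G_1,\chi\x{j-1}\}=U''_{k-j-1}Q^2\partial_{\phi_{k-j}}f\x{j-1}$; together with the $\ell\ge2$ estimate this is \eref{eq:derphifj}. Note that without this refinement the $\ell=1$ term would keep an extra summand $-U''_{k-j-1}Q\partial_{I_{k-j}}f\x{j-1}$, itself of order $L^{-2j}$, which would destroy the claimed accuracy $L^{-2j-1}$ — so verifying that $f\x{j-1}$ is independent of $I_{k-j}$ is really the heart of the argument.
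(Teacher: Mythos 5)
Your proof is correct and follows essentially the same route as the paper's: isolate the $\ell=1$ term in the series \eref{e:fr}, absorb the $\ell\geq 2$ tail into $\OOd^\fin(L^{-2j-1})$, use the support of $\chi\x{j-1}$ to reduce $\partial_{\phi_{k-j-1}}\{G_1,\chi\x{j-1}\}$ to $-U''_{k-j-1}\partial_{I_{k-j}}\chi\x{j-1}$, and then apply \eref{eq:commutatdT} together with the fact that $f\x{j-1}$ is independent of $I_{k-j}$. You are right that the last ingredient is not a consequence of the stated support observation, which tracks only the combined $(\phi,I)$-support; the paper handles it in a short parenthetical (``built from Poisson brackets between $\chi\x{j-2}$ and some forces, none of which involve $I_{k-j}$''), while you spell out the inductive refinement (action-support of $f\x m$ is $\{k-m,\dots,k+m\}$, with $Q$ widening it by one on each side) — a welcome clarification of a step the paper leaves implicit.
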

\begin{proof}
Recall that by \eref{e:fr},
\begin{equs}
f\x j &= \left\{ \sum_{s=0}^{j-2}f\x{s,\R} +f\x{j-1} - \frac {f\x{j-1, \NR}}{2} , \chi\x{j-1}\right\} + \OOd^\fin(L^{-2j-1}) ~,
\end{equs}
where the remainder contains finitely many Fourier modes, since $f\x j$ does.
The only term in the Poisson bracket involving the site $k-j-1$ is actually in $f\x {0,\R}$ here, since all other terms depend only on the variables $k-j, k-j+1, \dots$. Thus,
\begin{equs}
	\partial_{\phi_{k-j-1}} f\x j &= \left\{\partial_{\phi_{k-j-1}} f\x {0,\R} , \chi\x{j-1}\right\} + \OOd^\fin(L^{-2j-1})~.
\end{equs}
Observing that $\partial_{\phi_{k-j-1}} f\x {0,\R} =
U'_{k-j-2}(\phi_{k-j-1}-  \phi_{k-j-2}) -U'_{k-j-1}(\phi_{k-j} -\phi_{
  k-j-1})$ and that the first contribution has vanishing  Poisson bracket with $\chi \x{j-1}$, we obtain
\begin{equs}
	\partial_{\phi_{k-j-1}} f\x j &= \left\{ -U'_{k-j-1} , \chi\x{j-1}\right\} + \OOd^\fin(L^{-2j-1}) \\
& = - U''_{k-j-1}  \partial_{I_{k-j}} \chi\x{j-1}+ \OOd^\fin(L^{-2j-1})~.
\end{equs}
But now, $ \partial_{I_{k-j}} \chi\x{j-1} =  \partial_{I_{k-j}} Q f\x{j-1} = (Q \partial_{I_{k-j}} - Q^2 \partial_{\phi_{k-j}})f \x {j-1}$ by \eref{eq:commutatdT}. But by its definition, $f \x {j-1}$ depends on $\phi_{k-j}$ but not  on $I_{k-j}$ (because it is built from some Poisson brackets between $\chi\x{j-2}$ and some forces, none of which involve $I_{k-j}$). Thus, we indeed obtain \eref{eq:derphifj}.
\end{proof}

Now, using \eref{eq:decompositionP1}, the fact that $\partial_{\phi_1}$ commutes with $Q$,
and then \lref{eq:commfj} with $j=k-2$ leads to
\begin{equs}
	P_1 &= -\partial_{\phi_1} \chi\x{k-2} +\OOd(L^{5-4k})\\
&= -Q \partial_{\phi_1} f\x {k-2} + \OOd(L^{5-4k}) \\
& =  -Q \left( U''_{1} Q^2 \partial_{\phi_2} f\x{k-3}  + \OOd^\fin(L^{3-2k})\right) + \OOd(L^{5-4k})\\
& =  -Q  U''_{1} Q^2 \partial_{\phi_2} f\x{k-3}  + \OOd(L^{2-2k}) ~.
\end{equs}
Using \lref{eq:commfj} again repeatedly and finally that $\partial_{\phi_k} f\x 0 = U'_{k-1}$, we obtain
\begin{equs}[eq:P1pmUUU]
P_1 &= -Q U''_{1} Q^2 U''_{2} Q^2 \cdots U''_{k-2 }Q^2 U'_{k-1} + \OOd(L^{2-2k})~.
\end{equs}

We now show that $P_1$ is well approximated
by a simple explicit formula. To construct this formula,
consider first \eref{eq:P1pmUUU}
for the special choice $I_k=L$ and $I_i=0$ for all $i\neq k$. Then $Q$ has the effect of dividing
each term of the Fourier series
by $\i \mu_k L$, \ie, it integrates with
respect to $\phi_k$ and divides by $L$. Since $U_{k-1}$ is the only
factor that depends on $\phi_k$, the operator $Q$ just integrates
$U_{k-1}$ and divides by $L$. Thus, since the operator $Q$ appears $2k-3$ times,
we expect $P_1$ to be well approximated by $M_1(\bphi)/L^{2k-3}$ with
\begin{equs}[eq:defM1] M_1(\bphi)=  -{U''_{1}U''_{2}\cdots U''_{k-2} G }~,
\end{equs}
where $G$ is the unique solution on $\torus$ of ${\d^{2k-4}}G(\phi)/\d \phi^{2k-4} = U_{k-1}(\phi)$
satisfying
$\int_0^{2\pi} G(\phi)\, \d \phi = 0$. The next lemma shows that the error made by this approximation is only $\OOs(L^{2-2k})$.
\begin{lemma}\label{lem:realboundP1}
We have
\begin{equs}[eq:P1avecR]
P_1 = \frac{M_1(\bphi)}{L^{2k-3}} +\OOs(L^{2-2k})~.
\end{equs}
\end{lemma}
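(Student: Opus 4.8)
\emph{Proof proposal.} The starting point is \eref{eq:P1pmUUU}, which already represents $P_1$, up to an $\OOd(L^{2-2k})$ remainder, as the explicit composition $-Q U''_{1} Q^2 U''_{2} Q^2 \cdots U''_{k-2} Q^2 U'_{k-1}$ containing exactly $2k-3$ copies of $Q$. My plan is to show that, \emph{when evaluated on the real set} $\BB_{L,\rho^*}$, each $Q$ can be replaced by $\frac{1}{L}\hat Q$, where $\hat Q$ keeps only the Fourier modes with $\mu_k\neq0$ and divides the mode $\bmm$ by $\i\mu_k$, the error being one power of $L$ smaller; the resulting leading term is then computed by hand and shown to equal $M_1(\bphi)/L^{2k-3}$. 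The first thing to record is a structural fact: every function on which a $Q$ acts in \eref{eq:P1pmUUU} has finitely many Fourier modes, all non-resonant, with $\mu_k$ ranging over the fixed finite support of $U_{k-1}$. Indeed, the only $\phi_k$-dependence comes from the rightmost factor $U'_{k-1}(\phi_k-\phi_{k-1})$, and neither multiplication by $U''_j$ with $j\le k-2$ (which does not involve $\phi_k$) nor application of $Q$ alters $\mu_k$; in particular the denominators $\IM$ never vanish, by \lref{lem:lhalf} (note $\BB_{L,\rho^*}\subset\DD_{L,r,\sigma}$ for $r$ small and $L$ large, by \rref{rem:inclusionBBDD}).

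Next I would make the replacement precise. On $\BB_{L,\rho^*}$ one has $I_i=\OOs(L^0)$ for $i\ne k$ and $I_k=L+\OOs(L^0)$, so for any of the finitely many non-resonant modes above, $\IM=\mu_k L+\OOs(L^0)$ while $|\IM|\ge cL$ by \lref{lem:lhalf}; hence $\frac{1}{\IM}=\frac{1}{\mu_k L}+\OOs(L^{-2})$ uniformly. Writing $Q=\frac{1}{L}\hat Q+E$, it follows that on a function of the above type that is $\OOs(L^s)$ one has $\frac{1}{L}\hat Q(\cdot)=\OOs(L^{s-1})$ and $E(\cdot)=\OOs(L^{s-2})$, and that $\frac{1}{L}\hat Q$, $E$ and multiplication by a potential all preserve the property ``finitely many non-resonant modes with bounded $|\bmm|$'', so the estimates can be iterated along the composition. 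Substituting $Q=\frac{1}{L}\hat Q+E$ in each of the $2k-3$ slots and expanding, the term in which every slot is $\frac{1}{L}\hat Q$ equals $\frac{1}{L^{2k-3}}\bigl(-\hat Q U''_1\hat Q^2 U''_2\cdots\hat Q^2 U'_{k-1}\bigr)$, while each of the finitely many remaining terms contains at least one $E$ and is therefore $\OOs(L^{-(2k-3)-1})=\OOs(L^{2-2k})$; together with the $\OOd(L^{2-2k})$ already present in \eref{eq:P1pmUUU}, this accounts for the error term in \eref{eq:P1avecR}.

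It then remains to identify the leading term, i.e.\ to check that $-\hat Q U''_1\hat Q^2 U''_2\cdots\hat Q^2 U'_{k-1}=M_1(\bphi)$. Since all the iterates are independent of $\bI$, so is this expression, and $\hat Q$ acts on a function of $\phi_k$ simply as ``antiderivative in $\phi_k$ with vanishing $\phi_k$-average''. The factors $U''_j$, $j\le k-2$, do not involve $\phi_k$ and hence commute with every $\hat Q$, so the $2k-3$ copies of $\hat Q$ all act on the $\phi_k$-dependent part: the first turns $U'_{k-1}(\phi_k-\phi_{k-1})$ into $U_{k-1}(\phi_k-\phi_{k-1})$ (here one uses that $U_{k-1}$ has zero average), and the remaining $2k-4$ produce the $(2k-4)$-th zero-average antiderivative of $U_{k-1}$, which is precisely $G(\phi_k-\phi_{k-1})$. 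The potentials $U''_j$ accumulate to a factor $U''_1U''_2\cdots U''_{k-2}$, and with the overall sign this is exactly $M_1(\bphi)$ as defined in \eref{eq:defM1}.

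The one genuinely delicate point is the replacement step: it is crucial to work on $\BB_{L,\rho^*}$ rather than on a domain $\DD_{L,r,\sigma}$, since only there are the actions $I_i$, $i\ne k$, bounded \emph{independently} of $L$, so that $\IM$ approximates $\mu_k L$ with \emph{relative} error $\OOs(L^{-1})$; on $\DD_{L,r,\sigma}$ the $I_i$ are allowed to be of size $rL$, the relative error would be of order one, and the approximation $Q\approx\frac{1}{L}\hat Q$ would be worthless. The rest is bookkeeping: checking that the finitely many error terms from the expansion are all of order $L^{2-2k}$ (automatic once the ``type'' is preserved along the composition) and carrying the constant factors through the explicit antiderivative computation.
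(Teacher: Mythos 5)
Your proof is correct and takes essentially the same approach as the paper: both hinge on the approximation $\frac{1}{\IM}=\frac{1}{\mu_k L}+\OOs(L^{-2})$, valid precisely because on $\BB_{L,\rho^*}$ the actions $I_i$, $i\neq k$, are bounded independently of $L$, and on the observation that only $U_{k-1}$ depends on $\phi_k$ so each $Q$ reduces to $\phi_k$-integration divided by $L$. The paper expands the full explicit Fourier sum of \eref{eq:P1pmUUU} and Taylor-expands the product of denominators in one step, whereas you package the same estimate into the operator decomposition $Q=\frac1L\hat Q+E$ and expand the $2k-3$-fold composition; this is a bookkeeping variant, not a different route, and your identification of the leading term with $M_1(\bphi)/L^{2k-3}$ matches the paper's computation.
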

\begin{proof}
Consider the Fourier series of $U_i$, written
\begin{equ}
U_i(\phi_{i+1}-\phi_i) = \sum_{\bmm \in \NN_i} \hat U_i(\bmm )e^{\i \bmm \cdot  \bphi}~,
\end{equ}
where $\NN_i$ is a finite (by \aref{d:22}) set of vectors $\bmm \in \integer^n$ satisfying
$\mu_j = 0$ for $j\notin \{i, i+1\}$, and $\mu_{i+1} = -\mu_i$. Note that in particular
$
U_i'(\phi_{i+1}-\phi_i) =\i  \sum_{\bmm \in \NN_i} \mu_{i+1} \hat U_i(\bmm )e^{\i \bmm \cdot  \bphi}~,
$
and similarly for $U_i''$ with one more factor of $\i \mu_{i+1}$.
In such notation, observe that \eref{eq:P1pmUUU} reads
\begin{equs}[eq:summumkm1]
P_1 &= - \sum_{\bmm\x 1, \dots, \bmm\x {k-1}}  \frac{e^{\i  \bmm\xx {1}\cdot \bphi }}{\mu \x{k-1}_k}  \frac{(\mu \x 1_2)^2\hat U_1(\bmm \x 1)}{\bmm \xx {1} \cdot \bI} \prod_{i=2}^{k-1} \frac{(\mu \x i_{i+1})^2\hat U_i(\bmm \x i)}{(\bmm\xx {i} \cdot \bI)^2 } + \OOd(L^{2-2k})~,
\end{equs}
where we sum over all $\bmm \x i \in \NN_i$ for $i=1, \dots, k-1$, and where we write
\begin{equ}
\bmm \xx {j} = \sum_{i=j}^{k-1}\bmm \x {i}~.
\end{equ}

Now, observe that
\begin{equ}[eq:avanttaylor]
 \frac{1}{\bmm \xx {1} \cdot \bI} \prod_{i=2}^{k-1} \frac{1}{(\bmm\xx {i} \cdot \bI)^2 } =  \frac 1 {L^{2k-3}} \frac{1}{\mu \xx 1_k} \prod_{i=2}^{k-1} \frac{1}{(\mu\xx i_k)^2 } + \OOs(L^{2-2k})~,
\end{equ}
as Taylor's theorem and the definition of $\BB_{L, \rho^*}$ show. (Note that here we only have an $\OOs(L^{2-2k})$ and not an $\OOd(L^{2-2k})$.)

Moreover, observe that for all $i=1, \dots, k-1$, we have $\mu \xx i_k = \mu \x {k-1}_k$,
which is just a manifestation of the fact that $U_{k-1}$ is the only interaction potential involving $\phi_k$. By this observation,  \eref{eq:avanttaylor}, and the fact that the sum in \eref{eq:summumkm1} is finite, we obtain
\begin{equs}
P_1 &=   -\frac 1{L^{2k-3}}\sum_{\bmm\x 1, \dots, \bmm\x {k-1}} {e^{\i  \bmm\xx 1\cdot \bphi }} \frac{\hat U_{k-1}(\bmm \x{k-1})}{(\mu\x{k-1}_k)^{2k-4}} \prod_{i=1}^{k-2}(\mu \x i_{i+1})^2\hat U_i(\bmm \x i) + \OOs(L^{2-2k})~,
\end{equs}
from which it is easy to identify \eref{eq:P1avecR}.
\end{proof}

Looking at \eref{eq:dynamicstildeItildephi}, we expect that when $L$ is large, $\tilde x(t)$ will be well approximated by the solution $\overline x(t)$ of the following {\em decoupled ODE}:
\begin{equs}[eq:ODEdec1]
	\dt  \overline {I}_{i}& = -\frac{\partial f\x{0,\R}}{\partial \phi_i}(\overline {\bphi})
 -\gamma \delta_{1,i} \overline {I}_1~,\\
	\dt  \overline \phi_{i} &= \overline I_i~,
\end{equs}
with $\overline x(0) = \tilde x(0)$.
We call this system {\em decoupled} since the site $k$ does not interact at all with the other sites (recall that $f\x{0,\R}$ contains all the interaction potentials $U_i$ except those that involve the site $k$, see \eref{eq:decompF0}). In fact, the subsystems $(\overline\phi_{\ell}, \overline I_\ell, \dots, \overline\phi_{k-1}, \overline I_{k-1})$, $(\overline\phi_{k}, \overline I_{k})$, and $(\overline\phi_{k+1}, \overline I_{k+1}, \dots,\overline\phi_{n}, \overline I_{n})$ (if $k<n$) are isolated from each others, and in particular $\overline I_k$ remains constant.

The next proposition shows that along the trajectories of the decoupled system, the integral $\int_0^1 M_1^2(\overline{\bphi}(t))\,\d t$ is bounded below. This and the fact that $\tilde x(t)$ is close to $\overline x(t)$ will lead to the proof of \pref{p:p1not0}.

\begin{lemma}\label{lem:solutionanalytique}
The solution to \eref{eq:ODEdec1} is a real-analytic function of $t$ for all times.
\end{lemma}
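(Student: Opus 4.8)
The plan is the standard two-step argument for such systems: (1) the vector field in \eref{eq:ODEdec1} is real-analytic, so the solution is automatically real-analytic on its maximal interval of existence; (2) a global-in-time a priori bound on the actions, coming from the (dissipated) energy of the decoupled system, shows that this maximal interval is all of $\real$.

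\textbf{Step 1: local analyticity.} By \aref{d:22} each $U_i$ is a trigonometric polynomial, hence real-analytic on $\torus$; consequently $f\x{0,\R}$ and each partial derivative $\partial_{\phi_i} f\x{0,\R}$ are real-analytic on $\torus^n$, and the right-hand side of \eref{eq:ODEdec1}---a sum of such derivatives together with the linear term $-\gamma\delta_{1,i}\overline I_1$---is a real-analytic vector field on $\real^n\times\torus^n$. By the classical theory of ordinary differential equations with real-analytic right-hand side (the solution of $\dot y = V(y)$ with $V$ real-analytic is a real-analytic function of $t$ on its maximal interval of existence; this follows e.g. from convergence of the Picard iterates in a space of analytic functions), the solution $\overline x(t)$ with $\overline x(0) = \tilde x(0)$ is real-analytic in $t$ on its maximal interval $(t_-,t_+)$. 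It remains only to prove $(t_-,t_+) = \real$.

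\textbf{Step 2: global existence.} Since the angles $\overline\phi_i$ live in the compact torus, a blow-up can only occur through the actions, so it suffices to bound them on every bounded time interval. The action $\overline I_k$ is \emph{exactly} constant: by \eref{eq:decompF0} the function $f\x{0,\R}$ does not depend on $\phi_k$, so $\dt\overline I_k = 0$ from \eref{eq:ODEdec1}. For the remaining actions, set $C_0 \equiv \sup_{\bphi\in\torus^n}|f\x{0,\R}(\bphi)| < \infty$, introduce the energy of the decoupled system
\begin{equs}
\overline E(t) = \sum_{i\neq k}\frac{\overline I_i^2(t)}{2} + f\x{0,\R}(\overline\bphi(t))~,
\end{equs}
and let $W(t) = \overline E(t) + C_0 \geq \sum_{i\neq k}\tfrac12\overline I_i^2(t) \geq 0$. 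A direct computation using \eref{eq:ODEdec1} (in which the terms $\partial_{\phi_i}f\x{0,\R}\cdot\overline I_i$ cancel) gives $\dt\overline E = -\gamma\overline I_1^2$, hence $|\dt W| = \gamma\overline I_1^2 \leq 2\gamma W$ because $\tfrac12\overline I_1^2 \leq W$. Gronwall's inequality then yields $W(t) \leq W(0)\,e^{2\gamma|t|}$ on $(t_-,t_+)$, which bounds $\overline I_i(t)$ for all $i\neq k$ on every bounded subinterval; together with $\overline I_k$ constant and the $\overline\phi_i$ confined to $\torus^n$, the solution stays in a compact subset of phase space on every bounded interval, so it cannot escape in finite time and $(t_-,t_+) = \real$. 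Combined with Step 1, $\overline x$ is real-analytic on all of $\real$.

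\textbf{Main obstacle.} The argument is essentially routine; the only point requiring care is that the dissipation gives only a one-sided ($t\geq 0$) monotonicity of the energy, so the a priori bound for negative times is obtained not from monotonicity of $\overline E$ but from the Gronwall estimate for $W$, which controls the energy growth in both time directions and thus rules out finite-time blow-up backward in time as well.
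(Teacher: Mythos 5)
Your proof is correct and follows the same basic route as the paper's: local real-analyticity from the analytic-ODE existence theory (the paper cites Cauchy--Kovalevskaya, you cite Picard iteration in a space of analytic functions — these are the same theorem in this setting), followed by an a priori bound on the actions from the dissipated energy of the decoupled system to rule out finite-time blow-up. The one genuine improvement in your write-up is the treatment of negative times: the paper's argument invokes only that the energy of the decoupled system is non-increasing, which bounds the actions forward in time but not backward, whereas your Gronwall estimate $|\dt W|\le 2\gamma W$ controls the growth of $W=\overline E + C_0$ in both time directions and so honestly delivers analyticity on all of $\real$ as the lemma's wording promises. That extra care is not needed for the paper's application (Proposition~\ref{prop:M1t01} only integrates over $t\in[0,1]$, and one could also observe that the decoupled flow is Hamiltonian plus linear damping and hence complete for $t\le 0$ by the same reasoning with the sign of $\gamma$ flipped), but it does make the proof match the stated conclusion literally. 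One small note: your observation that $\overline I_k$ is exactly conserved, so no bound on it is needed, is correct and is left implicit in the paper.
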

\begin{proof}
It follows from the Cauchy-Kovalevskaya theorem that the solution is
locally real analytic in time.
Since the energy of the decoupled system is non-increasing, the solution cannot blow up, and
the Picard existence and uniqueness theorem guarantees that the solution is real-analytic for all times.
\end{proof}

\begin{proposition}\label{prop:M1t01}
Under the decoupled dynamics \eref{eq:ODEdec1}, there is a constant
 $\overline c_*>0$ such that if $L$ is large enough, then for any initial
condition
$\overline x(0)=(\overline\bI(0),\overline{\bphi} (0)) \in \BB_{L, \rho^*}$, we have
\begin{equs}[eq:intm2gqca]
\int_0^1 M_1^2(\overline{\bphi}(t))\,\d t  \geq  \overline{c}_*~.
\end{equs}
\end{proposition}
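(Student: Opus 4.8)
The plan is to argue by contradiction, exploiting that under the decoupled dynamics \eref{eq:ODEdec1} the angle $\overline\phi_k$ winds with speed $\overline I_k(0)\sim L$ while everything else stays bounded, so a homogenization (averaging) argument reduces the integral to the average of $M_1^2$ over the fast phase, times a prefactor whose positivity is forced by \aref{d:23}. Concretely, under \eref{eq:ODEdec1} the action $\overline I_k$ is constant and $\overline\phi_k(t)=\overline\phi_k(0)+\overline I_k(0)t$, while $(\overline\phi_1,\dots,\overline\phi_{k-1},\overline I_1,\dots,\overline I_{k-1})$ obeys a closed system: a chain of $k-1$ rotators with potentials $U_1,\dots,U_{k-2}$ and dissipation on the first site (exactly the potentials in $f\x{0,\R}$ touching sites $\le k-1$). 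Writing $\Theta(t)=\overline\phi_k(t)-\overline\phi_{k-1}(t)$ and $p(t)=\prod_{i=1}^{k-2}U_i''(\overline\phi_{i+1}(t)-\overline\phi_i(t))^2\ge 0$, \eref{eq:defM1} gives $M_1^2(\overline\bphi(t))=p(t)\,G(\Theta(t))^2$. Assume the statement fails: there are $L_m\to\infty$ and $\overline x^{(m)}(0)\in\BB_{L_m,\rho^*}$ with $\int_0^1 M_1^2(\overline\bphi^{(m)}(t))\,\d t\to 0$. The slow energy $E_{\rm slow}=\tfrac12\sum_{i=1}^{k-1}\overline I_i^2+\sum_{i=1}^{k-2}U_i$ satisfies $\dt E_{\rm slow}=-\gamma\overline I_1^2\le 0$ and is bounded on $\BB_{L_m,\rho^*}$ by a constant $E_0$ independent of $m$, so all slow trajectories stay in the fixed compact positively invariant set $K=\{E_{\rm slow}\le E_0\}$. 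Passing to a subsequence, the slow initial data converge in $K$, so by continuous dependence the slow trajectories converge uniformly on $[0,1]$ (and on any compact time interval) to a slow trajectory $q_\infty$, while $\overline I_k^{(m)}(0)\in(L_m-\rho^*,L_m+\rho^*)\to\infty$.

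\emph{Homogenization.} By \aref{d:22}, $U_{k-1}$, hence $G$, hence $G^2$ has finitely many Fourier modes: $G(\theta)^2=\langle G^2\rangle+\sum_{0<|\ell|\le N}a_\ell e^{\i\ell\theta}$. Along the $m$-th trajectory, $\dot\Theta^{(m)}=\overline I_k^{(m)}(0)-\overline I_{k-1}^{(m)}(t)\ge \overline I_k^{(m)}(0)-C$ (with $\overline I^{(m)}$ bounded on $K$), while $\ddot\Theta^{(m)}=U_{k-2}'(\cdot)$, $p^{(m)}$ and $\dt p^{(m)}$ are bounded uniformly in $m$, and $p^{(m)}\to p(q_\infty(\cdot))$ uniformly on $[0,1]$. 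For each $\ell\ne 0$, integrating by parts in $\int_0^1 p^{(m)}(t)e^{\i\ell\Theta^{(m)}(t)}\,\d t$ via $\tfrac{\d}{\d t}e^{\i\ell\Theta^{(m)}}=\i\ell\dot\Theta^{(m)}e^{\i\ell\Theta^{(m)}}$ yields a boundary term and a remaining integral, both $O(1/\overline I_k^{(m)}(0))\to 0$. Hence $\int_0^1 M_1^2(\overline\bphi^{(m)}(t))\,\d t\to \langle G^2\rangle\int_0^1 p(q_\infty(t))\,\d t$, which by assumption is $0$. Now $G\not\equiv 0$, since otherwise $U_{k-1}=\d^{2k-4}G/\d\phi^{2k-4}\equiv 0$, contradicting \aref{d:23}; so $\langle G^2\rangle>0$ and therefore $\int_0^1 p(q_\infty(t))\,\d t=0$. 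Since $p(q_\infty(\cdot))$ is continuous and nonnegative it vanishes on $[0,1]$, and since $q_\infty$ is real-analytic (\lref{lem:solutionanalytique}) and the $U_i''$ are trigonometric polynomials, $p(q_\infty(\cdot))$ is real-analytic and hence vanishes for all $t\ge 0$.

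\emph{Contradiction via dissipation.} For the slow chain, $E_{\rm slow}$ is a Lyapunov function on $K$ with $\dt E_{\rm slow}=0$ exactly on $\{\overline I_1=0\}$, so by LaSalle's invariance principle every trajectory in $K$ converges to the largest invariant subset $M$ of $\{\overline I_1=0\}$. On $M$, $\overline I_1\equiv 0$ forces $U_1'(\overline\phi_2-\overline\phi_1)=\dt\overline I_1\equiv 0$; as the zeros of $U_1'$ are isolated, $\overline\phi_2-\overline\phi_1$ is constant, so $0=\dt(\overline\phi_2-\overline\phi_1)=\overline I_2-\overline I_1=\overline I_2$; iterating down the chain gives $\overline I_j\equiv 0$ and $U_j'(\overline\phi_{j+1}-\overline\phi_j)\equiv 0$ for $j=1,\dots,k-2$. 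Thus $M$ projects, in the variables $\psi_j=\overline\phi_{j+1}-\overline\phi_j$, onto a finite set on which every $U_j'$ vanishes and hence, by \aref{d:23}, every $U_j''$ is nonzero; so $p$ is bounded below by a positive constant on a neighborhood of that set. Since $q_\infty(t)\to M$ as $t\to\infty$, we get $p(q_\infty(t))\ge c>0$ for all large $t$, contradicting $p(q_\infty(\cdot))\equiv 0$. This proves the proposition. (When $k=2$ the product defining $p$ is empty, $p\equiv 1$, and the homogenization step alone gives $\int_0^1 M_1^2(\overline\bphi^{(m)}(t))\,\d t\to\langle U_1^2\rangle>0$, with no contradiction argument needed.)

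\emph{Main obstacle.} The delicate point is the positivity $\int_0^1 p(q_\infty(t))\,\d t>0$: one must exclude that the prefactor $\prod_i U_i''(\overline\phi_{i+1}-\overline\phi_i)^2$ vanishes identically along a trajectory of the slow chain, and this is exactly where \aref{d:23} is indispensable — via the fact that the dissipative slow chain relaxes to configurations where all bond forces $U_j'$ vanish, at which non-degeneracy keeps the $U_j''$ away from zero. Without \aref{d:23} the slow chain could rest at a configuration with $U_i'=U_i''=0$, killing $M_1$. The homogenization estimate itself is routine, being stationary phase in the rapidly winding variable $\Theta$.
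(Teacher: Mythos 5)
Your proof is correct, and it follows the same overall template as the paper's: reduce $\int_0^1 M_1^2$ to $\langle G^2\rangle\int_0^1 K^2 + \OOs(L^{-1})$ by stationary phase in the fast angle, exploit compactness of the slow initial data (which, as you note, is $L$-independent because the slow chain $1,\dots,k-1$ is autonomous under \eref{eq:ODEdec1}), and then show by contradiction, using real-analyticity of the trajectory (\lref{lem:solutionanalytique}) and Assumptions~\ref{d:22}--\ref{d:23}, that the slow prefactor $K^2 = \prod_{i=1}^{k-2}U_i''(\cdot)^2$ cannot have vanishing time average.

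Where you genuinely diverge from the paper is in the final contradiction. The paper's route is short and finite-time: if $K\equiv 0$ on $[0,1]$, then (analyticity plus factorization of real-analytic functions) some single factor $U_\ell''(\overline\phi_{\ell+1}-\overline\phi_\ell)\equiv 0$ for all $t$; since $U_\ell''$ has isolated zeros (\aref{d:22}), the bond angle is constant, and \aref{d:23} then forces $U_\ell'\equiv\kappa\neq 0$; this constant force drives the total momentum of the sub-chain $\ell+1,\dots,k-1$ linearly to infinity, hence its energy blows up, contradicting that the dissipative slow energy is non-increasing. Your route keeps the full product $p=K^2$ and instead invokes LaSalle's invariance principle: after extending $p(q_\infty)\equiv 0$ to all $t\ge 0$ by analyticity, you let the slow trajectory relax to the invariant set $M$ inside $\{\overline I_1=0\}$, deduce by iterating down the chain that on $M$ all bond forces $U_j'$ vanish, invoke \aref{d:23} to get $U_j''\neq 0$ there, and conclude $p$ is bounded below near $M$, contradicting $p(q_\infty(\cdot))\equiv 0$. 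Both contradictions exploit \aref{d:23} at the same spot, but in mirrored ways: the paper turns a vanishing $U_\ell''$ into a constant nonzero force and blows up energy; you let the system relax, get vanishing $U_j'$, and then the positivity of $U_j''$ does the job. The paper's version is somewhat more elementary (no $\omega$-limit sets), while yours avoids the factorization of a real-analytic product into a single vanishing factor and gives a slightly more ``dynamical'' picture of why the rest states are exactly where non-degeneracy saves the day; you also handle $k=2$ explicitly, which the paper leaves implicit.

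Two minor remarks. First, your setup implicitly assumes $k\ge 3$ in the iteration down the chain (for $k=2$, as you note, the prefactor is trivially $1$); the paper shares this tacit restriction. Second, your phrase ``$q_\infty(t)\to M$'' should be read as $\mathrm{dist}(q_\infty(t),M)\to 0$, which is what LaSalle actually gives; this suffices for the contradiction since $p\ge c>0$ on a neighborhood of the compact set $M$.
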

\begin{proof}
Let
\begin{equs}
K(t) = U_1''\bigl(\overline{\phi}_2(t)-\overline{\phi}_1(t)\bigr)\cdot
U_2''\bigl(\overline{\phi}_3(t)-\overline{\phi}_2(t)\bigr) \cdots U_{k-2}''\bigl(\overline{\phi}_{k-1}(t)-\overline{\phi}_{k-2}(t)\bigr)~.
\end{equs}

First, observe that there is a constant $C$, independent of $L$, such that for all $\overline  x(0) \in \overline{\BB}_{L, \rho^*}$ (the closure of $\BB_{L, \rho^*}$)
and all $t\in [0,1]$ we have $|K(t)| \leq C$ and $|\dot K(t)| \leq C$.

Next, we have
\begin{equs}
M_1^2\bigl(\overline{\bphi}(t)\bigr) =K^2(t)G^2(\overline{\phi}_k(t)) =K^2(t)\bigr(  \langle G^2\rangle + \psi'(\overline \phi_k(t))\bigr)~,
\end{equs}
where
\begin{equ}
  \langle G^2\rangle= \frac{1}{2\pi }\int_0^{2\pi} G^2(\phi)\,\d \phi>0~,
\end{equ}
and $\psi$ is a primitive of $G^2 -   \langle G^2\rangle$ on $\torus$ (the fact that $\langle G^2 \rangle$ is strictly
positive follows from \aref{d:23}, which guarantees that $U_{k-1}$, and hence also $G$, are non-constant).

Thus, recalling that $\dt  {\overline \phi_k}(t) = \overline I_k(t) =  \overline I_k(0) $, we obtain after integrating by parts that
\begin{equs}
 \int_0^1 K^2(t) \psi'(\overline \phi_{k}(t)) \,\d t & = \left[\frac {\psi(\overline \phi_k(t))}{\overline I_k(0)}K^2(t) \right]_0^1 - 2\int_0^1\frac {\psi(\overline \phi_k(t))}{\overline I_k(0)}K(t)\dot K(t)\d t.
\end{equs}
Since both terms above are $\OOs(L^{-1})$, we have
\begin{equa}
  \int_0^1 M_1^2\bigl(\overline {\bphi}(t)\bigr)\,\d t &=
\langle G^2\rangle\int_0^1 K^2(t) \,\d t +\OOs(L^{-1})~.
\end{equa}

Observe now that $\inf_{\overline x(0) \in \BB_{L, \rho^*}}\int_0^1 K^2(t) \,\d t$ is independent of $L$, since
$K(t)$ does not depend on $I_k$.
Thus, since the $\OOs(L^{-1})$ above can be made arbitrarily small, the proposition is proved if we can show that
\begin{equ}
	\inf_{\overline x(0) \in \BB_{0, \rho^*}}\int_0^1 K^2(t) \,\d t > 0~.
\end{equ}
By compactness, it suffices to prove that for all $\overline x(0) \in \overline{\BB}_{0, \rho^*}$,
\begin{equs}[eq:K2geq0]
\int_0^1 K^2(t)\,\d t  > 0~.
\end{equs}
Assume by contradiction that $\overline x(0) \in  \overline{\BB}_{0, \rho^*}$
and that $K(t) \equiv 0$ on the interval $[0,1]$. By analyticity
(\lref{lem:solutionanalytique}), it is easy to realize that this
implies that for some $\ell < k$,  $U_\ell ''(\overline\phi_\ell(t) -
\overline\phi_{\ell-1}(t)) \equiv 0$ for all $t > 0$ (not only in
$[0,1]$). We now show that this leads to a contradiction.

Since  $U_\ell ''$ has no flat part by \aref{d:22}, this implies that actually
$\overline\phi_\ell(t) - \overline\phi_{\ell-1}(t)$ is constant. By
Assumption~\ref{d:23}, we have $U_\ell '(\overline\phi_\ell(t) -
\overline\phi_{\ell-1}(t)) \equiv \kappa \neq 0$. But then, the subsystem
$(\overline\phi_{\ell}, \overline I_\ell, \dots, \overline\phi_{k-1}, \overline I_{k-1})$ receives a
constant, non-zero force $\kappa$. Thus, the total momentum of this
subsystem will eventually be arbitrarily large, and so will its energy.
This is a contradiction, since the energy of the system
$(\overline\phi_{1}, \overline I_1, \dots, \overline\phi_{k-1}, \overline I_{k-1})$ is
non-increasing. The proof is complete.
\end{proof}

The next proposition shows that these estimates extend to the dynamics of $\tilde x$, and
also to any subinterval of length 1 in $[0,T]$.
\begin{proposition}\label{prop:intM1sqrealdyn}
There is a constant $\tilde{c}_*>0$ such that for all large enough $L$,
and all $0 \leq t_0 \leq T-1$,
\begin{equs}[eq:intm2gqc]
\int_{t_0}^{t_0+1} M_1^2\bigl(\tilde \bphi(t)\bigr)\,\d t  \geq  \tilde c_*~.
\end{equs}
\end{proposition}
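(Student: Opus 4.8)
\emph{Proof strategy.} The plan is to reduce \pref{prop:intM1sqrealdyn} to \pref{prop:M1t01}. Fix $t_0 \in [0, T-1]$ and let $\overline x(t)$ be the solution of the decoupled system \eref{eq:ODEdec1} with initial condition $\overline x(t_0) = \tilde x(t_0)$; this is admissible because $\tilde x(t_0) \in \BB_{L, \rho^*}$ by \pref{p:wasGronwall}. I would compare $\tilde x(t)$ and $\overline x(t)$ on $[t_0, t_0+1]$ and transfer the lower bound of \pref{prop:M1t01} from $\overline{\bphi}$ to $\tilde \bphi$. The only features of $M_1$ that matter are that, by \eref{eq:defM1} (equivalently, the explicit Fourier formula derived in the proof of \lref{lem:realboundP1}), $M_1(\bphi)$ depends only on the angles $\phi_i$ with $i \le k$, and that $M_1$ together with its first derivatives is bounded on $\BB_{L,\rho^*}$ by a constant independent of $L$.

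First I would estimate $\tilde x(t) - \overline x(t)$ by Grönwall. By \lref{lem:eqItilde}, for $i < k$ the right-hand sides of the equations for $(\tilde \phi_i, \tilde I_i)$ equal the right-hand sides of \eref{eq:ODEdec1} evaluated at $\tilde x(t)$, up to the remainders $\OOd_\R^\fin(L^{-2}) + \OOd(L^{3-2k})$ and $\OOd_\R^\fin(L^{-3}) + \OOd(L^{2-2k})$, which are $\OOs(L^{-1})$ along the orbit (using $k \ge 2$); here one uses that $\partial_{\phi_i} f\x{0,\R}$ involves only angles of sites $<k$ when $i<k$. For site $k$, \eref{eq:specialcasedtIk} gives $\dt \tilde I_k = \OOd(L^{3-2k})$, so $\tilde I_k(t) = \tilde I_k(t_0) + \OOs(L^{-1})$ on $[t_0,t_0+1]$, whence $\dt(\tilde \phi_k - \overline \phi_k) = \OOs(L^{-1})$ there. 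The vector field of the decoupled slow subsystem is autonomous and, on $\BB_{L,\rho^*}$, Lipschitz with a constant independent of $L$ (the relevant $U_i$ are trigonometric polynomials and the slow actions are bounded), and \pref{p:wasGronwall} keeps $\tilde x(t) \in \BB_{L,\rho^*}$ for $t \le T$. Grönwall over the unit interval then gives $|\tilde \phi_i(t) - \overline \phi_i(t)| + |\tilde I_i(t) - \overline I_i(t)| = \OOs(L^{-1})$ for all $i \le k$ and $t \in [t_0,t_0+1]$, with constants independent of $t_0$. Since $M_1$ is uniformly bounded and Lipschitz and depends only on the angles of sites $\le k$, this yields $M_1^2(\tilde \bphi(t)) = M_1^2(\overline \bphi(t)) + \OOs(L^{-1})$ uniformly on $[t_0,t_0+1]$, and hence $\int_{t_0}^{t_0+1} M_1^2(\tilde \bphi(t))\,\d t = \int_{t_0}^{t_0+1} M_1^2(\overline \bphi(t))\,\d t + \OOs(L^{-1})$.

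Finally, because \eref{eq:ODEdec1} is autonomous, $s \mapsto \overline x(t_0+s)$ is again a solution of \eref{eq:ODEdec1}, with initial datum $\overline x(t_0) = \tilde x(t_0) \in \BB_{L,\rho^*}$, so \pref{prop:M1t01} gives $\int_{t_0}^{t_0+1} M_1^2(\overline \bphi(t))\,\d t = \int_0^1 M_1^2(\overline \bphi(t_0+s))\,\d s \ge \overline c_*$. Combining this with the previous estimate, $\int_{t_0}^{t_0+1} M_1^2(\tilde \bphi(t))\,\d t \ge \overline c_* - \OOs(L^{-1}) \ge \half \overline c_*$ for $L$ large, so the claim holds with $\tilde c_* = \half \overline c_*$, uniformly in $t_0 \in [0,T-1]$. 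I expect the only delicate points to be (i) checking that every constant, in particular the Grönwall constant and the implied constants in the $\OOs$-notation, is genuinely independent of $t_0$ --- which is automatic since the equations are autonomous, the $\OOs$-bounds are uniform over $\BB_{L,\rho^*}$, and \pref{p:wasGronwall} confines the orbit to that set on all of $[0,T]$ --- and (ii) noting that $\overline x(t)$, while not necessarily remaining in $\BB_{L,\rho^*}$, stays in a fixed neighbourhood of it where the slow vector field is still uniformly Lipschitz, which follows from the Grönwall bound itself.
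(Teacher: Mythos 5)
Your proposal is correct and follows essentially the same route as the paper: compare $\tilde x(t_0+t)$ with the solution $\overline x(t)$ of the decoupled ODE started at $\tilde x(t_0)$, show the orbits stay close over the unit interval, and invoke \pref{prop:M1t01} together with the uniform continuity of $M_1$. The only minor discrepancy is that the paper asserts $\sup_{t\in[0,1]}|\tilde x(t_0+t)-\overline x(t)|=\OOs(L^{-2})$ while your Gr\"onwall argument yields $\OOs(L^{-1})$ (which is in fact the right order for $k=2$, since the remainder in $\dt\tilde I_i$ is $\OOd(L^{3-2k})=\OOd(L^{-1})$ there); either estimate is small enough for the conclusion, and your extra care regarding uniformity in $t_0$ and the possibility of $\overline x$ drifting slightly outside $\BB_{L,\rho^*}$ is a welcome, if not strictly necessary, addition.
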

\begin{proof}
We compare the trajectory
$\tilde x(t_0+t)$ and the solution $\overline x(t)$ of \eref{eq:ODEdec1}
with initial condition $\overline x(0) = \tilde x(t_0)$, which belongs
to $\BB_{L, \rho^*}$.
By comparing \eref{eq:dynamicstildeItildephi} and \eref{eq:ODEdec1}, we see that
\begin{equ}
  \sup_{t\in[0,1]} \bigl|
    \tilde x(t_0+t) -\overline{x} (t)\bigr| = \OOs(L^{-2}) ~.
\end{equ}
Using \pref{prop:M1t01}, we then obtain the desired result with $\tilde{c}_*=
\overline{c}_*/2$ and $L$ sufficiently large,
since $M_1$ is uniformly continuous.
\end{proof}

\begin{proof}[Proof of \pref{p:p1not0}]
By \lref{lem:realboundP1},
\begin{equa}[eq:lbM1sq]
  \int_0^T P_1^2(\tilde{x}(t))\,\d t&\ge \frac{1}{L^{4k-6}}
\int_0^TM_1^2\bigl(\tilde{\bphi}(t)\bigr)\,\d t
+\OOs(T/L^{4k-5})\\
&=
\frac{1}{L^{4k-6}}
\int_0^TM_1^2\bigl(\tilde{\bphi}(t)\bigr)\,\d t
+\OOs(L^{2-2k})~.
\end{equa}
Next, we consider the decomposition
  \begin{equ}
    \int_0^T M_1^2(\tilde{\bphi}(t)) \d t \geq \sum_{n=0}^{\lfloor T-1 \rfloor }\int_n^{n+1} M_1^2(\tilde{\bphi}(t)) \d t~.
  \end{equ}
By \pref{prop:intM1sqrealdyn}, we find that for all large enough $L$,
  \begin{equ}
    \int_0^T M_1^2(\tilde{\bphi}(t)) \d t \geq \tilde{c}_*\lfloor T \rfloor ~.
  \end{equ}
Thus, by \eref{eq:lbM1sq}, we indeed get \eref{eq:integralep1sq} if $c = \tilde{c}_*/2$ and $L$ is large enough. This completes the proof.
\end{proof}

\subsection{Neglecting the mixed term}\label{sec:partii}

In this subsection we complete the proof of \tref{t:main}.  Note that \eref{e:dissipative} and \pref{p:p1not0} imply
that
\begin{equ} \label{eq:sect9}
H(x(T)) - H(x(0)) \le -\gamma \int_0^T
\tilde{I}_1^2(t)\, \d t  - \frac{\gamma \alpha c}{L^{2k -3}}
- 2\gamma \int_0^T \tilde{I}_1(t) P_1(\tilde{x}(t))\,\d t\ .
\end{equ}

It remains to prove that the last term in the right-hand side is smaller (in magnitude) than the first two,
from which \tref{t:main} will follow.

First, by \eref{eq:decompositionP1}, we can write
\begin{equs}
P_1 = P_{10} + P_{11}~,
\end{equs}
where
\begin{equs}
	P_{10}&= -\partial_{ \phi_1}\chi\x{k-2} = \OOd_{\NR}^\fin(L^{3-2k})~,\\
P_{11} &= \OOd(L^{5-4k})~.
\end{equs}

By Young's inequality, for any $K>0$,
\begin{equs}
|2 \tilde I_1   P_{11}(\tilde x)|&	 \leq \frac {\tilde I_1^2}{K} + K P^2_{11}(\tilde x)~.
\end{equs}
Thus, we find
\begin{equs}\label{e:newstar}
-2\gamma \int_0^T \tilde  I_1 (t)\,   P_{11}(\tilde x(t))\,\d t \leq \frac 1K\int_0^T { {\tilde I_1^2 }}(t) \d t  	+ \OOs(L^{7-6k})~.
\end{equs}
If we choose first $K$ large enough, and then $L$ large enough, the above is indeed smaller than the two ``good'' terms in \eref{eq:sect9}.

It remains therefore only to deal with
 \begin{equs}[eq:intP100T]
\int_0^T \tilde  I_1 (t)\,   P_{10}(\tilde x(t))\,\d t~.
\end{equs}

The main result of this subsection is the following proposition:
\begin{proposition}\label{prop:borneunnterme}
We have
\begin{equs}[eq:IntP10]
\int_0^T \tilde  I_1 (t)\,   P_{10}(\tilde x(t))\,\d t = \OOs(L^{2-2k})~.
\end{equs}
\end{proposition}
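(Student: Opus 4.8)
The plan is to exploit the oscillatory (non-resonant) nature of $P_{10}$ by repeated integration by parts --- homogenization --- but now against the full coupled dynamics \eref{eq:dynamicsTilde} of $\tilde x(t)$ rather than the decoupled one. The elementary device is the following: if $g$ is a \emph{finite, purely non-resonant} function, $g=\OOd_{\NR}^\fin(L^\sigma)$, then by \lref{lem:propsOO} the function $Qg=\OOd_{\NR}^\fin(L^{\sigma-1})$ is well defined and satisfies $\poiss{h\x0}{Qg}=-g$, so that along the trajectory
\begin{equs}
g(\tilde x(t))=\dt\bigl[(Qg)(\tilde x(t))\bigr]-\poiss{Qg}{\tilde H-h\x0}(\tilde x(t))+\gamma I_1(t)\,(\partial_{I_1}Qg)(\tilde x(t))~.
\end{equs}
Since $\tilde H-h\x0=f\x{0,\R}+\OOd_{\R}^\fin(L^{-2})+\OOd(L^{2-2k})$ by \eref{eq:HtildeOR}, each term on the right besides the total derivative is of strictly lower order: $\poiss{Qg}{f\x{0,\R}}$ is again finite and purely non-resonant --- a Poisson bracket of a non-resonant function with a resonant one is non-resonant --- and is $\OOd_{\NR}^\fin(L^{\sigma-2})$; $\poiss{Qg}{\OOd_{\R}^\fin(L^{-2})}$ is $\OOd_{\NR}^\fin(L^{\sigma-4})$; $\poiss{Qg}{\OOd(L^{2-2k})}$ is $\OOd(L^{\sigma-2k})$; and the dissipative term, with $\partial_{I_1}Qg=\OOd_{\NR}^\fin(L^{\sigma-2})$, contributes only $\OOs(L^{\sigma-2})$ along the trajectory because $I_1=\OOs(L^0)$ there (by \eref{eq:remaininBLrp} and \lref{lem:lemp1}).

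I would then study integrals $J[a,g]\equiv\int_0^T a(t)\,g(\tilde x(t))\,\d t$, where $g=\OOd_{\NR}^\fin(L^\sigma)$ and $a$ is a function of $t$ that, together with all its time derivatives up to order $2k-2$ along $\tilde x(t)$, is $\OOs(L^0)$; the quantity \eref{eq:intP100T} is $J[\tilde I_1,P_{10}]$ with $\sigma=3-2k$. Using the identity above and integrating by parts,
\begin{equs}
J[a,g]=\bigl[a\,(Qg)(\tilde x)\bigr]_0^T-J[\dot a,Qg]-\int_0^T a\,\poiss{Qg}{\tilde H-h\x0}(\tilde x)\,\d t+\gamma\int_0^T a\,I_1\,(\partial_{I_1}Qg)(\tilde x)\,\d t~.
\end{equs}
The boundary term is $\OOs(L^{\sigma-1})$. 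The second term is again of the form $J[\,\cdot\,,\,\cdot\,]$, with $a\mapsto\dot a$ and $\sigma\mapsto\sigma-1$; the last term equals $J[\gamma aI_1,\partial_{I_1}Qg]$ with $\sigma\mapsto\sigma-2$ (its coefficient has controlled time derivatives, since $\dt I_1=\dot{\tilde I}_1+\dt\bigl[P_1(\tilde x(t))\bigr]=\OOs(L^0)$, and similarly for higher derivatives); and the third term is $J[a,\poiss{Qg}{f\x{0,\R}}]$ with $\sigma\mapsto\sigma-2$, plus $J[a,\poiss{Qg}{\OOd_{\R}^\fin(L^{-2})}]$ with $\sigma\mapsto\sigma-4$, plus a genuine remainder $\int_0^T a\,\OOd(L^{\sigma-2k})(\tilde x)\,\d t=\OOs(L^{\sigma-3})$.

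Iterating this, and starting from $\sigma=3-2k$: every step lowers $\sigma$ by at least $1$, so after at most $2k-2$ steps one reaches $\sigma\le5-4k$, at which point the trivial bound $J[a,g]=\OOs(L^{\sigma+2k-3})=\OOs(L^{2-2k})$ already closes that branch and the recursion stops. The recursion tree thus has depth at most $2k-2$ and bounded branching, hence finitely many nodes for each fixed $k$; every boundary term it produces is $\OOs(L^{\sigma-1})$ with $\sigma\le3-2k$, i.e.\ $\OOs(L^{2-2k})$ --- the dominant one being $\bigl[\tilde I_1\,(QP_{10})(\tilde x)\bigr]_0^T=\OOs(L^{2-2k})$, as $QP_{10}=-\partial_{\phi_1}Q\chi\x{k-2}=\OOd(L^{2-2k})$ --- while every leaf and every genuine remainder is $\OOs(L^{2-2k})$ or smaller. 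Summing the finitely many contributions gives \eref{eq:IntP10}.

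The main obstacle is the bookkeeping behind the assumption that $a$ and its time derivatives are $\OOs(L^0)$. By \lref{lem:eqItilde}, $\dot{\tilde I}_1$ is a resonant (hence $\tilde\phi_k$-independent) $\OOs(L^0)$-function plus an $\OOd(L^{3-2k})$-term; each time differentiation costs at most one power of $L$, stemming solely from the rotation of $\tilde\phi_k$ at angular velocity of order $L$, so since the $\tilde\phi_k$-dependent part starts at order $L^{3-2k}$ it stays $\OOs(L^0)$ through all of the first $2k-1$ derivatives --- just enough for the recursion, whose branches that multiply by $I_1$ (which involves $P_1(\tilde x)=\OOd(L^{3-2k})$) descend fewer levels and hence never require a derivative of too high an order. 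One must also keep the resonance and finiteness bookkeeping consistent throughout: $Q$, the $\partial_{I_i}$, and Poisson brackets against finite resonant functions all preserve finiteness and map non-resonant functions to non-resonant ones, so $Q$ can legitimately be applied again at each step.
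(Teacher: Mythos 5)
Your proposal attacks the integral by the same core mechanism as the paper: introduce $Qg$ so that $\poiss{h\x0}{Qg}=-g$, write $g(\tilde x(t))$ as a total time derivative plus lower-order terms, integrate by parts, and iterate finitely many times until the integrand is small enough that the trivial bound $\int_0^T \OOs(L^\sigma)\,\d t=\OOs(TL^\sigma)$ already gives $\OOs(L^{2-2k})$. The analysis of the remainder terms is also the same: the resonant part of $\tilde H-h\x0$ costs $\geq 2$ powers of $L$, the genuine $\OOd(L^{2-2k})$ remainder in $\tilde H$ produces a term that is disposed of immediately, the boundary terms are $\OOs(L^{\sigma-1})=\OOs(L^{2-2k})$, and the dissipation produces a new branch multiplied by $I_1$.

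The genuine difference is the bookkeeping, and the paper's choice is materially cleaner. The paper stores the prefactor $I_1^m$ \emph{inside} the phase-space function, via the class $\RR(p,m)$ of functions $I_1^m f^*(x)$ with $f^*=\OOd^\fin_{\NR}(L^{-p})$; then $Qf=I_1^m Qf^*$ remains in the same class, and one single application of \lref{lem:eqItilde} converts $\dt F(\tilde x(t))$ into phase-space data without ever having to differentiate $\tilde I_1$ in time. You instead factor out a time function $a(t)$ and study $J[a,g]$, so each $J[\dot a,Qg]$ step produces an explicit extra time derivative of $a$, and you have to prove separately that $\tilde I_1^{(m)}$ and $I_1^{(m)}$ remain $\OOs(L^0)$ for $m$ up to roughly $2k-2$ (your ``$2k-1$'' is a notch too generous: the $\tilde\phi_k$-dependent part of $\dot{\tilde I}_1$ starts at $L^{3-2k}$, so $\tilde I_1^{(m)}$ is safe only for $m\le 2k-2$, and $I_1^{(m)}$ — which contains $\dt^{m}[P_1(\tilde x)]$ with $P_1=\OOd(L^{3-2k})$ fully $\phi_k$-dependent — only for $m\le 2k-3$; you correctly note that the $I_1$-branches descend fewer levels, which is exactly what saves you). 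None of this extra work is needed in the paper, because differentiating $F(\tilde x)$ once and reading off the right-hand side of \eref{eq:dynamicstildeItildephi} produces only phase-space quantities, not higher time derivatives of $\tilde I_1$.

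Two smaller points. First, your identity has $(\partial_{I_1}Qg)(\tilde x)$ in the dissipation term, but \eref{eq:dynamicsTilde} dictates $\partial_{I_1}\bigl[(Qg)(\tilde x)\bigr]$ with the derivative taken in the \emph{original} $I_1$; by \lref{lem:deriveewrtI1} the two differ by $\OOd(L^{\sigma-2k})$, which is harmless but ought to be said. Second, you should note (as the paper does implicitly via \dref{def:OOdRNRfin} and the structure of $\RR(p,m)$) that the Poisson bracket of a finite non-resonant function with a finite resonant one is again finite and non-resonant, so $Q$ can legitimately be reapplied at every level; you do mention this, which is good. Overall: correct approach, same core idea, but your $J[a,g]$ bookkeeping forces you to prove a cascade of derivative bounds on $\tilde I_1$ and $I_1$ that the paper's $\RR(p,m)$ formalism sidesteps entirely.
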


This proposition, together with \eref{e:newstar}, will guarantee that the third term in \eref{eq:sect9} is small in comparison with the first two.
We will prove \pref{prop:borneunnterme} by successive integrations by parts.

The integrand in \eref{eq:intP100T} has the form $\tilde I_1\, \OOd_{\NR}^\fin(L^{3-2k})$.
Our main induction step deals with slightly more general integrands, where the power of $\tilde I_1$ can be different from one. We say that an admissible function $f$ is of class $\RR(p,m)$ with $p, m\geq 0$ if
\begin{equ}
f ( x) =  I_1^m\, f^* ( x)~,
\end{equ}
where $f^* = \OOd_{\NR}^\fin(L^{-p})$.
Note that we have $f = \OOs(L^{-p})$, but only $f = \OOd(L^{m-p})$.

\begin{lemma}\label{lem:102}
Let $f$ be of class $\RR(p,m)$ with $m\geq 0$ and $p\geq 2k-3$. Then, there exist
finitely many functions $g_1, \dots, g_{N}$, each of class $\RR(m_\ell, p_\ell)$ with $m_\ell\geq 0$ and $p_\ell \geq p+1$, such that
\begin{equs}
   \int_0^T f(\tilde x (t)) \d t & = \sum_{\ell=1}^N \int_0^T g_\ell(\tilde x(t))\d t +  \OOs(L^{2-2k})~.
\end{equs}
\end{lemma}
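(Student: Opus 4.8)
The plan is to remove the oscillating factor $f^*$ from the integrand by a single integration by parts against the fast rotation of $\tilde\phi_k$. Since $f^* = \OOd_{\NR}^{\fin}(L^{-p})$ has finitely many Fourier modes, all non-resonant, I may set $\chi \equiv Q f^*$; by \lref{lem:propsOO} (i) one has $\chi = \OOd_{\NR}^{\fin}(L^{-p-1})$, and by \eref{eq:cancelNR}, $\poiss{h\x0}{\chi} = -f^*$, equivalently $\poiss{\chi}{h\x0} = f^*$, where throughout $h\x0 = \sum_i \tilde I_i^2/2$ as in \eref{eq:HtildeOR}. The essential point is the choice of the auxiliary function $\Phi \equiv \tilde I_1^{\,m}\chi$: since $\tilde I_1^{\,m}$ and $h\x0$ depend only on the actions $\tilde\bI$, their Poisson bracket vanishes, so $\poiss{\Phi}{h\x0} = \tilde I_1^{\,m}\poiss{\chi}{h\x0} = \tilde I_1^{\,m} f^* = f$. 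On the other hand, \eref{eq:dynamicsTilde} gives $\dt\Phi(\tilde x(t)) = \poiss{\Phi}{\tilde H}(\tilde x(t)) - \gamma I_1\,\partial_{I_1}\Phi(\tilde x(t))$, and writing $g \equiv \tilde H - h\x0 = f\x{0,\R}(\tilde\bphi) + \OOd_{\R}^{\fin}(L^{-2}) + \OOd(L^{2-2k})$ (cf.\ \eref{eq:HtildeOR}) yields the identity
\begin{equ}
f(\tilde x(t)) = \dt\Phi(\tilde x(t)) + \gamma I_1\,\partial_{I_1}\Phi(\tilde x(t)) - \poiss{\Phi}{g}(\tilde x(t))~.
\end{equ}
Integrating this over $[0,T]$ reduces the lemma to bounding one boundary term and the time-integrals of the last two terms.

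For the boundary term, on $\BB_{L,\rho^*}$ the action $\tilde I_1$ is bounded uniformly in $L$, so $\Phi = \OOs(L^{-p-1})$, and since $p\geq 2k-3$ this is $\OOs(L^{2-2k})$ at the endpoints $t\in\{0,T\}$ (which lie in $\BB_{L,\rho^*}$ by \pref{p:wasGronwall}). For the two remaining terms I would expand $\gamma I_1\partial_{I_1}\Phi$ and $\poiss{\Phi}{g}$ by the Leibniz rule --- and, for $\partial_{I_1}$, by the chain rule together with \lref{lem:deriveewrtI1}, which says $\partial_{I_1}$ differs from $\partial_{\tilde I_1}$ only by terms of order $L^{1-2k}$--$L^{2-2k}$ --- and sort each resulting piece into one of two bins. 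The first bin contains the pieces of the form $\tilde I_1^{\,m'}\cdot(\cdots)$ in which $(\cdots)$ is $\chi$ itself, an action-derivative of $\chi$, or a Poisson bracket of $\chi$ with a resonant factor: explicitly, $\gamma m\,\tilde I_1^{\,m}\chi$ and $\gamma\,\tilde I_1^{\,m+1}\partial_{\tilde I_1}\chi$ coming from $\gamma I_1\partial_{I_1}\Phi$, and $\tilde I_1^{\,m}\poiss{\chi}{f\x{0,\R}}$, $-m\,\tilde I_1^{\,m-1}(\partial_{\tilde\phi_1}f\x{0,\R})\,\chi$, together with the two analogues with $\OOd_{\R}^{\fin}(L^{-2})$ in place of $f\x{0,\R}$, coming from $\poiss{\Phi}{g}$. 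Here one uses the structural fact that the Poisson bracket of a non-resonant function with a resonant one is again non-resonant (because the $k$-th component of the output mode is $\mu_k + \nu_k = \mu_k \neq 0$), so that $\chi$ carries its non-resonance through, together with the fact that each Poisson bracket and each $\partial_{\tilde I_i}$ gains a factor $L^{-1}$ by \lref{lem:propsOO} (d),(e). This makes every such piece of class $\RR(p_\ell, m_\ell)$ with $p_\ell \in \{p+1, p+2, p+3, p+4\}$, hence $p_\ell \geq p+1$, and $m_\ell \in \{m-1, m, m+1\}$, hence $m_\ell \geq 0$ (the pieces with $m_\ell = m-1$ being absent when $m=0$). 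These are the functions $g_1, \dots, g_N$.

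The second bin contains everything else: the pieces involving the $\OOd(L^{2-2k})$ remainder inside $g$, the difference $I_1 - \tilde I_1 = P_1 = \OOd(L^{3-2k})$ (which enters because the dissipation acts on the physical action $I_1 = \tilde I_1 + P_1$, cf.\ \lref{lem:lemp1}), and the $\OOd(L^{1-2k})$--$\OOd(L^{2-2k})$ corrections furnished by \lref{lem:deriveewrtI1}. On $\BB_{L,\rho^*}$ each of these is bounded by $\OOs(L^{-p-1})$ times an extra factor of size $\OOd(L^{3-2k})$ or smaller, hence by $\OOs(L^{2-2k-p})$; multiplying by $T = \alpha L^{2k-3}$ gives $\OOs(L^{-1-p})$, which is $\OOs(L^{2-2k})$ precisely because $p \geq 2k-3$. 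Summing the finitely many contributions of this bin with the boundary term gives the claimed $\OOs(L^{2-2k})$ error, and the lemma follows.

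I expect the main obstacle to be purely bookkeeping: one must keep the two notions $\OOd$ and $\OOs$ carefully apart, since all Fourier/structural claims and all the $L^{-1}$ gains from differentiation rely on the Cauchy estimates of \lref{lem:propsOO}, valid on the complex domains $\DD_{L,\rs}$, whereas the final size estimates of the negligible terms must be read off on the real set $\BB_{L,\rho^*}$, where the bounded actions $I_i$, $i\neq k$, prevent the powers $\tilde I_1^{\,m}$ (which can grow as one iterates the lemma) from ever spoiling the bounds. A secondary point of care is to verify that no leftover piece exceeds $\OOs(L^{2-2k})$ after integration; the tightest of these, coming from the $P_1$-corrections and from $\partial_{I_1}$ versus $\partial_{\tilde I_1}$, is exactly $\OOs(L^{-1-p})$, which is why the hypothesis $p \geq 2k-3$ is the natural threshold (it is also exactly what makes the boundary term small).
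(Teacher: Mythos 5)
Your proposal is correct and follows essentially the same route as the paper's own proof: your $\Phi = \tilde I_1^{\,m}\chi = \tilde I_1^{\,m}Qf^*$ is exactly the paper's integration-by-parts function $F = Qf = I_1^{\,m}Qf^*$, your identity $\poiss{\Phi}{h\x0}=f$ is the paper's ``key observation'' $\sum_i I_i\partial_{\phi_i}F=f$, and both proofs end with the same boundary term $\OOs(L^{-p-1})=\OOs(L^{2-2k})$ and the same threshold $p\ge 2k-3$. The only cosmetic difference is that the paper invokes the pre-packaged dynamics of \lref{lem:eqItilde} and leaves the decomposition of the two $\OOs^{\fin}_{\NR}(L^{-p-1})$ terms implicit, whereas you split $\tilde H = h\x0 + g$ and handle the dissipative corrections directly from \lref{lem:deriveewrtI1} and \lref{lem:lemp1}, writing out the $g_\ell$ explicitly and tracking their classes $(p_\ell,m_\ell)$ piece by piece, which amounts to the same computation.
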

\begin{proof}
Since $f$ is of class $\RR(p,m)$, we have $f =  I_1^m \, f^*$, where $f^* = \OOd^\fin_{\NR}(L^{-p})$.
We can thus define
\begin{equs}
F(x) = Qf(x) =  I_1^m\, Qf^*(x)~,
\end{equs}
which is of class $\RR(p+1, m)$, and hence $\OOs^\fin_{\NR}(L^{-p-1})$. The key observation is that
\begin{equs}[eq:InablaFf]
\sum_{i=1}^n I_i\partial_{\phi_i} F =  I_1^m \sum_{i=1}^n I_i\partial_{\phi_i} Q f^* =  I_1^m f^* = f~.
\end{equs}
(The above is obtained like \eref{eq:cancelNR}, recalling that $f^*$ is non-resonant.)

Moreover, by \lref{lem:eqItilde},
\begin{equs}
\frac {\d }{\d t}F(\tilde x(t))& = \sum_{i=1}^n \frac{\partial F}{\partial { \phi_i}}(\tilde x(t))\left(\tilde I_i(t)+\OOs^\fin_{\R}(L^{-3}) + \OOs(L^{2-2k})\right)\\
& \quad + \sum_{i=1}^n \frac{\partial F}{\partial { I_i}}(\tilde x(t))\left(\OOs^\fin_{\R}(L^0) + \OOs(L^{3-2k})
 -\gamma \delta_{1,i} \tilde I_1(t)\right)\\
&  = \sum_{i=1}^n \tilde I_i(t)\frac{\partial F}{\partial { \phi_i}}(\tilde x(t)) + \OOs^\fin_{\NR}(L^{-p-1})  + \tilde I_1(t)\OOs^\fin_{\NR}(L^{-p-1}) + \OOs(L^{5-4k})~,
\end{equs}
where we have used that $\partial_{\phi_i} F $ and $\partial_{I_i} F $ are $\OOs^\fin_{\NR}(L^{-p-1})$, that $p\geq 2k-3$, and that the product of a non-resonant term and a resonant one is non-resonant. Using now \eref{eq:InablaFf} and decomposing the two $\OOs^\fin_{\NR}(L^{-p-1})$ above, we find
\begin{equs}
\frac {\d }{\d t}F(\tilde x(t))& = f(\tilde x(t)) - \sum_{\ell=1}^N g_\ell(\tilde x(t)) + \OOs(L^{5-4k})~,
\end{equs}
where $g_1, \dots, g_N$ are as in the statement of the lemma. But then, integrating both sides gives
\begin{equs}
   \int_0^T f(\tilde x (t)) \d t & = F(\tilde x(t))|_{0}^T + \sum_{\ell=1}^N \int_0^T g_\ell(\tilde x(t))\d t +  \OOs(L^{2-2k})~,
\end{equs}
from which the result follows since,  $F = \OOs(L^{-p-1}) =  \OOs(L^{2-2k})$.
\end{proof}

\begin{proof}[Proof of \pref{prop:borneunnterme}]
The integrand in \eref{eq:IntP10} is of class $\RR(2k-3, 1)$. The result then immediately follows from applying \lref{lem:102} recursively. Indeed, at each step, we have finitely many functions $g_\ell$. Moreover, at each step the degree $p_\ell$ of these functions increases at least by one. Thus, after finitely many steps all the $g_\ell$ have order $p_\ell \geq 5-4k$, and hence $\int_0^T g_\ell(\tilde x(t)) \d t = \OOs(T L^{5-4k}) = \OOs(L^{2-2k})$, which completes the proof.
\end{proof}

We can finally prove the main theorem.
\begin{proof}[Proof of \tref{t:main}]
Part (i) was proved in \pref{p:wasGronwall}, and Part (ii) follows immediately from the estimates above. Indeed, putting together \eref{eq:sect9}, \eref{e:newstar} (with $K$ large enough)  and \eref{eq:IntP10} implies that
\begin{equ}
H(x(T)) - H(x(0)) \le - \frac{\gamma\alpha c}{L^{2k -3}} +\OOs(L^{2-2k})~,
\end{equ}
which, by setting $C_1 = c/2$ and using  the definition of $\OOs$, completes the proof of Part (ii).
\end{proof}

\section{Asymptotic equations}\label{sec:asymptoticeqs}

We have found, through computer simulations and the arguments in \sref{sec:illustration},
that under dissipation,
the system quickly approaches a quasi-stationary state where $I_k$ oscillates
with very small amplitude
around some fixed value $\langle I_k \rangle \approx L$, and
where all other $I_j$ oscillate with very small amplitude around zero.
While we have not been able to
prove that this quasi-stationary state exists, it is still instructive
to derive asymptotic equations which seem to describe it.
We assume here that the potentials are $U_i(\phi_{i+1}-\phi_i) = -\cos(\phi_{i+1}-\phi_i)$.

Since the results of the previous sections show that the value of the action variable $I_k(t)$
changes very slowly, we have
\begin{equ}
\dot{\phi}_k = I_ k \approx L\ ,
\end{equ}
to a high degree of accuracy.  Thus, we can, for long times, approximate
\begin{equ}
\phi_k(t) \approx L t \ , \ \  I_k(t) \approx  L\ .
\end{equ}
(For simplicity, we have assumed that $\phi_k(0) = 0$.)

Now consider the evolution of $I_i$ for $i<k$, which is given by
\begin{equa}
  \dot I_{k-1} &= \sin(\phi_{k-2}-\phi_{k-1})- \sin(\phi_{k-1}-\phi_k) ~,\\
  \dot I_{k-2} &= \sin(\phi_{k-3}-\phi_{k-2})- \sin(\phi_{k-2}-\phi_{k-1}) ~,\\
  &\cdots \\
  \dot I_{1} &= \sin(\phi_{2}-\phi_{1}) - \gamma I_1~.
\end{equa}

Our numerical experiments show that the $\phi_i$, $i<k$,
oscillate around some (common) value, which is a minimum
of the interactions potentials. Since this minimum is degenerate
(as the
system is invariant under global rotation), we assume without loss of generality 
that the $\phi_i$, $i<k$ oscillate around zero.
Our numerical experiments show that, in terms of amplitudes of the
oscillations,
\begin{equ}
1 \gg \phi_{k-1} \gg \phi_{k-2} \gg \dots \gg \phi_1\ .
\end{equ}
Thus, the equation of motion for $I_{k-1}$ can be approximated by
\begin{equ}
\dot{I}_{k-1} = \sin(\phi_{k-2} - \phi_{k-1}) - \sin(\phi_{k-1}- \phi_k) \approx
\sin(\phi_k) \approx  \sin(Lt)\ ,
\end{equ}
so that
\begin{equ}
I_{k-1}(t) \approx -\frac{1}{L} \cos(Lt) \ .
\end{equ}
Since
\begin{equ}
\dot{\phi}_{k-1} = I_{k-1}\ ,
\end{equ}
we immediately find
\begin{equ}
\phi_{k-1}(t) \approx -\frac{1}{L^2} \sin(L t)\ .
\end{equ}

Continuing in this fashion, we arrive at an approximation to the
actions and angles in the quasi-stationary state of the form:
\begin{equa}
I_k& \approx L~,\\
  I_{k-1}& \approx - \cos(Lt)/L~,\\
  \phi_{k-1}& \approx- \sin(Lt)/L^2~,\\
  I_{k-2}& \approx  \cos(Lt)/L^3~,\\
  \phi_{k-2}& \approx \sin(Lt)/L^4~,\\
  I_{k-3}& \approx - \cos(Lt)/L^5~,\\
  \phi_{k-3}& \approx- \sin(Lt)/L^6~,\\
& \cdots ~.
\end{equa} 
This scaling predicted by this simple argument corresponds to \eref{eq:scalingImI}
and is corroborated by the results of the numerical
calculations, as shown in \fref{fig:fig0} and \fref{fig:fig2}.
A similar argument applies to the sites $i>k$.

Note also that in fact
\emph{any} periodic function with non-degenerate quadratic extrema can replace
the cosine above.

We conjecture that a whole family of such quasi-stationary states exists, and that after a ``fast'' initial transient the subsequent dissipative evolution occurs by a slow
motion along this family of quasi-stationary solutions. 
We expect the family of quasi-stationary states to be parameterized by $\langle I_k\rangle $ and the equilibrium position around which the $\phi_i$, $i\neq k$ oscillate.

\section{Decay rate for degenerate potentials}\Label{s:degenerate}\label{sec:nondeg}

In \tref{t:main} we proved a bound on the rate of dissipation of energy for certain trajectories that indicates that
the energy loss per unit time should scale like
$- L^{6-4k}$ when the $k$'th rotator  ($k\geq 2$) initially has very large energy and all other rotators
have initial energy of order $ 1$.  Our numerics indicate that for a system with cosine nearest
neighbor potentials, this decay rate is sharp.
However, we expect that if the potentials $U_i$ violate  \aref{d:23}, the
dissipation rate can be much slower.  Note, in particular, that this happens
 when the potentials have degenerate extrema, {\it i.e.},
$U_i''(\phi)=0$ and $U_i'(\phi)=0$ for some $\phi$ and $i$.

To illustrate the reasons for our expectations (in a non-rigorous way),
consider  a chain of 3
rotators, in which the first and the second are connected by a
degenerate potential, $U(\phi) = {(\cos(\phi)-1)^2}/{2}$, while the second
and third rotators are still coupled by a cosine potential. The Hamiltonian is then
\begin{equs}
H = \sum_{i=1}^3 \frac{I_i^2}2 - \cos(\phi_3-\phi_2) + \frac{(\cos(\phi_2-\phi_1)-1)^2}{2}	~.
\end{equs}
The point here is that the minimum of
$U(\phi) = {(\cos(\phi)-1)^2}/{2}$ is attained at $\phi=0$,
and that the first 3 derivatives at 0 vanish, leaving us with
the expansion $U(\phi) = \frac 1 8 \phi^4 + \OOs(\phi^6) $.

We now proceed to find approximate solutions of the equations of motion following the method sketched
in \sref{sec:asymptoticeqs}, making the same technical assumptions.  We find
\begin{equ}
I_3(t)  \approx L\ ,\ ~{\text{and}}~\ \phi_3(t) \approx Lt\ .
\end{equ}
Likewise, we have
\begin{equ}
I_2(t) \approx -\frac{1}{L} \cos(Lt)\ , ~{\text{and}}~\ \phi_2(t)  \approx -\frac{1}{L^2} \sin(Lt) \  .
\end{equ}

Now, consider the equation for $I_1(t)$. Unlike above, we have
\begin{equs}
\dot{I}_1 &= [1-\cos(\phi_2-\phi_1)]\sin(\phi_2-\phi_1) -\gamma I_1\\
& \approx [1-\cos(\phi_2) ] \sin(\phi_2) \approx  \frac{\phi_2^3}{2} \ ,
\end{equs}
where the second line appealed to our numerical observation that $\phi_2$ is much larger than
$\phi_1$ and $I_1$.
If we then insert our approximation $\phi_2(t) \approx -\frac{1}{L^2} \sin(Lt)$, we find
\begin{equ}
\dot I_1(t) \approx - \frac {\sin^3(Lt)}{2L^6}\ ,
\end{equ}
from which we obtain
\begin{equ}[eq:scalingI1deg]
I_1 \approx \frac {1} {2L^7}\left(\cos(Lt)- \frac 13 \cos^3(Lt)\right)\ .
\end{equ}

Thus, we find that in contrast to what we found under \aref{d:23}, the action variables now
have the following ratios:
\begin{equs}[eq:ratiosdegen]
\frac {|I_2|}{|I_3|} \sim \frac{1}{L^2}~, \qquad 	\qquad \frac {|I_1|}{|I_2|} \sim \frac{1}{L^6}~.
\end{equs}

By \eref{eq:scalingI1deg}, we expect the dissipation rate to scale
like $I_1^2 \sim L^{-14}$, which is much smaller than the $L^{6-4k} = L^{-6}$
that our main theorem gives under  \aref{d:23}.

We checked the statements above numerically for $L=10$, $20$  with
$\gamma=0.1$. In \fref{fig:counter1} we see the ratios in
\eref{eq:ratiosdegen} to a very good approximation (observing that the
bracket in \eref{eq:scalingI1deg} has an amplitude of $2/3$). We have
also checked that, in this case, the dissipation rate indeed scales like $L^{-14}$ if we start from an initial condition that is in the quasi-stationary state.

Note finally that one can make the situation much worse: by choosing
a potential $(1-\cos(\phi_2-\phi_1))^{r}$, we would obtain $I_1 \sim  L^{1-4r}$.

\begin{figure}
\begin{center}  \includegraphics[width=\textwidth, trim={0.3cm, 1cm, 0.35cm, 0}]{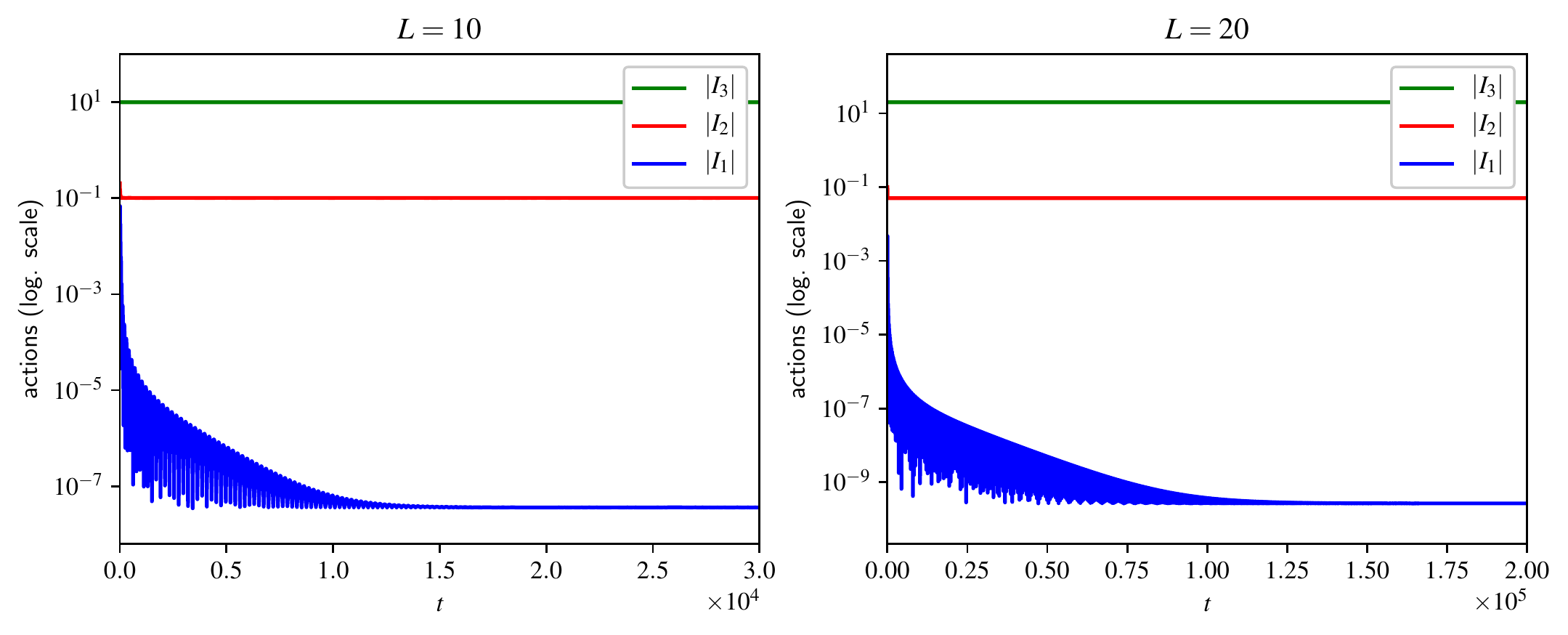}
\end{center}
  \caption{Maximum of $|I_i|$ over intervals of length $2\pi/L$ when the potential $U_1$ is degenerate, for $n=k=3$ and $\gamma=0.1$, with $L=10$ (left) and $L=20$ (right). Note that now the ratios in the quasi-stationary state are $|I_1/I_2|\sim
    L^{-6}$ and $|I_2/I_3|\sim L^{-2}$.}\label{fig:counter1}
\end{figure}

\begin{appendices}

\section{Bounds on analytic functions}\label{sec:thebounds}

The most convenient way to bound the actions of $e^{\chi}$ on a function
$f$ is provided by the analytic methods used by \cite{poeschel_1990}. We adapt
them here to our needs, where, instead of the usual small parameter,
we have here large momentum $I_k$.

We work in the domains $\DD_{L,r,\sigma}$ defined in \dref{def:drs}, and
use the norm (defined in \eref{e:normrs}):
\begin{equa}
 \norm{f}{L,\rs}=\sup_{(\bI,\bphi)\in \DD_{L,\rs}} |f(\bI,\bphi)|~.
\end{equa}

\begin{remark}\label{rem:sigma}
  We will tacitly assume throughout that $\sigma\le1$.
\end{remark}

We also assume that the functions $f, f_1, f_2, \dots$
have the form \eref{eq:fgeneric}.
The next lemma bounds restrictions of Fourier series.

\begin{lemma}\label{lem:g10}
  If $f$ is
  analytic on $\DD_{L,r,\sigma}$, then for any subset
  $\tilde\NN\subset\NN$, the function
$\tilde f =\sum_{\bmm\in\tilde\NN} f_\bmm(\bI)\, e^{\i  \bmm\cdot
    \bphi}$ is analytic on $\DD_{L,r,\tilde \sigma}$ for any
  $0<\tilde\sigma< \sigma$ and
  \begin{equ}
    \norm{\tilde f}{L,r,\tilde \sigma} \le
    \left(\frac{4}{\sigma-\tilde\sigma}\right)^n \norm{f}{L,\rs}~.
  \end{equ}
\end{lemma}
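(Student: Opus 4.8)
The plan is to extract the Fourier coefficients of $f$ by integration over a shifted torus contour, then bound the restricted series $\tilde f$ term by term. First I would fix $\bI$ with $|I_i - L\delta_{i,k}| < rL$ and, for a multi-index $\bmm$, recover $f_\bmm(\bI)$ by Cauchy's integral formula in each angular variable: shifting $\phi_i$ into the complex strip $|\Im \phi_i| < \sigma$ by an amount $\pm\mathrm{i}\sigma'$ (with sign opposite to that of $\mu_i$, and $\tilde\sigma < \sigma' < \sigma$) produces the classical exponential decay estimate
\begin{equs}
|f_\bmm(\bI)| \le \norm{f}{L,r,\sigma} \, e^{-|\bmm|_1 \sigma'}~,
\end{equs}
where $|\bmm|_1 = \sum_i |\mu_i|$. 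Here I must be slightly careful: the shift is admissible because $f$ is analytic on all of $\DD_{L,r,\sigma}$, so the contour stays inside the domain as long as $\sigma' < \sigma$.

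Next I would bound $\tilde f$ on $\DD_{L,r,\tilde\sigma}$. For $(\bI,\bphi)$ in that domain, $|e^{\mathrm{i}\bmm\cdot\bphi}| = e^{-\bmm\cdot\Im\bphi} \le e^{|\bmm|_1 \tilde\sigma}$, so
\begin{equs}
|\tilde f(\bI,\bphi)| \le \sum_{\bmm\in\tilde\NN} |f_\bmm(\bI)|\, e^{|\bmm|_1\tilde\sigma} \le \norm{f}{L,r,\sigma} \sum_{\bmm\in\integer^n} e^{-|\bmm|_1(\sigma'-\tilde\sigma)}~.
\end{equs}
The sum factorizes over the $n$ coordinates into $\bigl(\sum_{p\in\integer} e^{-|p|(\sigma'-\tilde\sigma)}\bigr)^n = \bigl(\coth\tfrac{\sigma'-\tilde\sigma}{2}\bigr)^n$. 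Now I would let $\sigma' \to \sigma$ and use the elementary estimate $\coth(x/2) \le 4/x$ valid for $0 < x \le 1$ (note $\sigma - \tilde\sigma \le \sigma \le 1$ by Remark~\ref{rem:sigma}), yielding the claimed bound $\bigl(4/(\sigma-\tilde\sigma)\bigr)^n$. Analyticity of $\tilde f$ on $\DD_{L,r,\tilde\sigma}$ follows since the series converges uniformly on compact subsets there.

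**Main obstacle.** The only genuinely delicate point is making sure the contour-shift argument is uniform in $\bI$ and that one does not accidentally need the series for $f$ itself to converge absolutely on the larger domain — it is the analyticity of $f$ on $\DD_{L,r,\sigma}$ (not a bound on its full Fourier series there) that licenses the coefficient estimate, and then the restricted sum $\sum_{\bmm\in\tilde\NN}$ converges on the strictly smaller strip. Everything else (the geometric-series computation, the $\coth(x/2)\le 4/x$ inequality) is routine; the constant $4$ is not optimal but is all that is needed, and the whole lemma will be applied only finitely many times, so no sharpness is required.
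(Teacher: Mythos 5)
Your proof is correct and follows essentially the same route as the paper's: Cauchy-type coefficient decay from analyticity, then bound the restricted series on the smaller strip, factorize the geometric sum coordinate-by-coordinate, and use $\sigma \le 1$ (Remark~\ref{rem:sigma}) to get the stated constant. You are in fact slightly more careful than the paper's write-up, which informally conflates the $\ell^1$ norm $\sum_i |\mu_i|$ (needed for the Cauchy contour-shift estimate and the factorization of the sum) with the sup-norm $|\bmm|=\max_i|\mu_i|$ fixed in Definition~\ref{d:norms}; your explicit use of $|\bmm|_1$ makes the chain of inequalities airtight.
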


\begin{proof}
Since $f$ is analytic (in $\bphi$),  we have $|f_\bmm(\bI)| \le \norm{f}{L,\rs} \, e^{-\sigma |\bmm|}$,
and therefore
\begin{equa}
   \norm{\tilde f}{L,r,\tilde\sigma } \le\sum_{\bmm\in\tilde\NN}
   \norm{f}{L,\rs} e^{-(\sigma-\sigma') |\bmm|}~.
\end{equa}
Note now that with $\delta = \sigma-\sigma'>0$, and since by
\rref{rem:sigma}, $\delta \le1$,
one has
\begin{equa}[eq:sumntilderestr]
  \sum_{\bmm\in\tilde\NN}
    e^{-\delta |\bmm|}\le \prod _{i=1}^n \left(\sum_{\mu_i=-\infty}^\infty
    e^{-\delta |\mu_i|}
\right)\le \left(\frac{2}{1-e^{-\delta }} \right)^{n}\le (4/\delta)^{n}~.
\end{equa}

\end{proof}

\begin{proposition}\label{prop:chibound}
  Suppose $f$ is analytic on $\DD_{L,\rs}$ and that the decomposition \eref{eq:fgeneric} contains
finitely many Fourier modes.
If $r<{1}/{(2n|\NN|)}$ (recall \dref{d:norms}), the function $Qf$ defined in \eref{eq:defQ}
is analytic on $\DD_{L,r,\sigma'}$ for any $0<\sigma'<\sigma$, and
\begin{equa}\Label{e:chibound}
  \norm{Qf}{L,r,\sigma' } \le 2 \left(\frac{4}{\sigma - \sigma'}\right) ^{n} \frac{\norm{f}{L,\rs}}{L}~.
\end{equa}
\end{proposition}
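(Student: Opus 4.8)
The plan is to reduce \pref{prop:chibound} to two facts that are already at hand: the lower bound $|\IM|>L/2$ on the small denominators from \lref{lem:lhalf}, and the Cauchy-type estimate on Fourier coefficients together with the geometric summation carried out in the proof of \lref{lem:g10}. Concretely, I would argue in two steps, after which the difficulty is essentially bookkeeping.

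\emph{Step 1 (the denominators are harmless).} Since $f$ has finitely many Fourier modes, every $\bmm\in\NN^\NR$ occurring in \eref{eq:defQ} has $\mu_k\neq0$ and $|\bmm|\le|\NN|$ in the max-norm sense of \dref{d:norms}. The hypothesis $r<1/(2n|\NN|)$ then gives $r<1/(2n|\bmm|)$ for every such $\bmm$, so \lref{lem:lhalf} yields $|\IM|>L/2$ on all of $\DD_{L,\rs}$. In particular each summand $f_\bmm(\bI)\,(\IM)^{-1}e^{\i\mphi}$ of \eref{eq:defQ} is a ratio of functions analytic on $\DD_{L,\rs}$ with nonvanishing denominator, and $Qf$, being a \emph{finite} sum of such terms, is analytic on $\DD_{L,\rs}$, hence a fortiori on $\DD_{L,r,\sigma'}$ for every $0<\sigma'<\sigma$.

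\emph{Step 2 (bound the coefficients and sum).} Fix $(\bI,\bphi)\in\DD_{L,r,\sigma'}$. For $\bI$ in the complex action polydisc $|I_i-L\delta_{i,k}|<rL$, the map $\bphi\mapsto f(\bI,\bphi)$ is analytic and bounded by $\norm{f}{L,\rs}$ on the strip $\{|\Im\phi_i|<\sigma\}$, so shifting each of the $n$ angular contours toward the boundary of that strip gives the coefficient estimate $|f_\bmm(\bI)|\le\norm{f}{L,\rs}\,e^{-\sigma\sum_{i}|\mu_i|}$ (this is exactly the bound opening the proof of \lref{lem:g10}). Since $|\Im\phi_i|<\sigma'$ we also have $|e^{\i\mphi}|\le e^{\sigma'\sum_i|\mu_i|}$. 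Combining these with $|\IM|>L/2$,
\[
  |Qf(\bI,\bphi)|\ \le\ \frac{2}{L}\sum_{\bmm\in\NN^\NR}|f_\bmm(\bI)|\,e^{\sigma'\sum_i|\mu_i|}\ \le\ \frac{2\,\norm{f}{L,\rs}}{L}\sum_{\bmm\in\integer^n}e^{-(\sigma-\sigma')\sum_i|\mu_i|}\ ,
\]
and the last sum factorizes over the coordinates and is bounded by $\bigl(4/(\sigma-\sigma')\bigr)^n$ exactly as in \eref{eq:sumntilderestr}, using $0<\sigma-\sigma'\le\sigma\le1$ (\rref{rem:sigma}). Taking the supremum over $(\bI,\bphi)\in\DD_{L,r,\sigma'}$ gives the asserted inequality.

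I do not expect any genuine obstacle here: the statement is a quantitative repackaging of \lref{lem:lhalf} and \lref{lem:g10}. The two points that need a little care are (i) verifying that the hypothesis $r<1/(2n|\NN|)$ is precisely what licenses \lref{lem:lhalf} for every mode in the finite support of $f$, so that the factor $L^{-1}$ and the constant $2$ appear as claimed; and (ii) keeping the exponential weights straight — the Cauchy estimate is taken on the full strip of width $\sigma$, whereas the growth of $|e^{\i\mphi}|$ only costs $\sigma'$, so the net weight $e^{-(\sigma-\sigma')\sum_i|\mu_i|}$ is summable with the stated constant.
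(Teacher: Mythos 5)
Your proof is correct and follows essentially the same route as the paper's: \lref{lem:lhalf} (licensed on every mode by $r<1/(2n|\NN|)$) controls the denominators, then the Cauchy estimate on Fourier coefficients combined with the geometric sum \eref{eq:sumntilderestr} gives the constant. Your explicit use of the weight $e^{-(\sigma-\sigma')\sum_i|\mu_i|}$ (rather than the paper's $e^{-(\sigma-\sigma')|\bmm|}$ with $|\bmm|=\max_i|\mu_i|$) is actually the precise form needed for the product factorization in \eref{eq:sumntilderestr}, so your bookkeeping is slightly tighter than what is written in \lref{lem:g10}, but the argument is the same.
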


\begin{proof}
By \lref{lem:lhalf}, each summand in the definition of $Qf$ is analytic on $\DD_{L, r, \sigma'}$.
Since the sum is finite, $Qf$ is analytic on the same domain.
Since $|f_\bmm(\bI)| \le \norm{f}{L,\rs} \, e^{-\sigma |\bmm|}$, and by \lref{lem:lhalf},
we find
\begin{equa}
   \norm{Qf}{L,r,\sigma' } \le\sum_{\bmm\in\NN^{\NR}}
   \frac{\norm{f}{L,\rs} e^{-(\sigma-\sigma') |\bmm|}}{L/2}~.
\end{equa}
The assertion follows using again \eref{eq:sumntilderestr}.
\end{proof}

\begin{lemma}\label{lem:poissonf1f2}
  Suppose $f_1$ and $f_2$ are analytic on $\DD_{L,r_1, \sigma_1}$ and $\DD_{L,r_2, \sigma_2}$ respectively.
Then $\poiss{f_1}{f_2}$ is analytic on $\DD_{\min(r_1, r_2), \min(\sigma_1, \sigma_2)}$.
Moreover, if $r' < \min(r_1, r_2)$ and $\sigma' < \min(\sigma_1, \sigma_2)$, then one has the bound
  \begin{equs}
    \norm{\poiss{f_1}{f_2}}{L,r',\sigma' }&\le
\left(\frac{n}{L(r_1-r')(\sigma_2 -\sigma')} + \frac{n}{L(r_2-r')(\sigma_1 -\sigma')} \right)
\times \norm{f_1}{L,r_1, \sigma_1} \norm{f_2}{L,r_2, \sigma_2}~.
  \end{equs}
\end{lemma}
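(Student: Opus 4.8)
The plan is to estimate the Poisson bracket \eqref{e:poisson} term by term, using Cauchy's theorem one variable at a time, and then to sum over the $n$ sites. Write
\begin{equs}
\poiss{f_1}{f_2} = \sum_{i=1}^n \big( \partial_{\phi_i} f_1\,\partial_{I_i} f_2 - \partial_{I_i} f_1\,\partial_{\phi_i} f_2 \big)~.
\end{equs}
Since $r'<\min(r_1,r_2)$ and $\sigma'<\min(\sigma_1,\sigma_2)$, we have $\DD_{L,r',\sigma'}\subset \DD_{L,r_j,\sigma_j}$ for $j=1,2$, so all four partial derivatives above are analytic on $\DD_{L,r',\sigma'}$, and hence so is $\poiss{f_1}{f_2}$; the same reasoning on the intersection domain gives analyticity of $\poiss{f_1}{f_2}$ on $\DD_{L,\min(r_1,r_2),\min(\sigma_1,\sigma_2)}$, which is the first assertion.

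For the quantitative bound, fix a point $z=\IP\in\DD_{L,r',\sigma'}$. To bound $\partial_{\phi_i} f_1(z)$, freeze every coordinate of $z$ except $\phi_i$ and apply the one-dimensional Cauchy estimate on the complex disk of radius $\sigma_1-\sigma'$ centered at the $\phi_i$-component of $z$. This disk stays in the strip $|\Im\phi_i|<\sigma_1$ because $|\Im\phi_i|<\sigma'$ at $z$, and the frozen coordinates already satisfy the constraints of $\DD_{L,r_1,\sigma_1}$, so
\begin{equs}
|\partial_{\phi_i} f_1(z)| \le \frac{\norm{f_1}{L,r_1,\sigma_1}}{\sigma_1-\sigma'}~.
\end{equs}
The essential point for the action variables is that in $\DD_{L,r_1,\sigma_1}$ the disk in $I_i$ has radius $r_1 L$, not $r_1$: freezing all coordinates but $I_i$ and using a Cauchy disk of radius $(r_1-r')L$ (which lies inside $|I_i-L\delta_{i,k}|<r_1L$ since $|I_i-L\delta_{i,k}|<r'L$ at $z$) gives
\begin{equs}
|\partial_{I_i} f_1(z)| \le \frac{\norm{f_1}{L,r_1,\sigma_1}}{(r_1-r')L}~.
\end{equs}
The analogous bounds hold for $f_2$ with $(r_2,\sigma_2)$ in place of $(r_1,\sigma_1)$. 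This extra factor $1/L$ attached to every action derivative is precisely the mechanism exploited throughout the paper (cf.\ \rref{rem:distsets} and \lref{lem:propsOO}(d)).

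Combining these four estimates, for each $i$ we get
\begin{equs}
|\partial_{\phi_i} f_1(z)\,\partial_{I_i} f_2(z)| + |\partial_{I_i} f_1(z)\,\partial_{\phi_i} f_2(z)| \le \left( \frac{1}{(\sigma_1-\sigma')(r_2-r')L} + \frac{1}{(r_1-r')L(\sigma_2-\sigma')} \right) \norm{f_1}{L,r_1,\sigma_1}\,\norm{f_2}{L,r_2,\sigma_2}~,
\end{equs}
and summing over $i=1,\dots,n$ and taking the supremum over $z\in\DD_{L,r',\sigma'}$ gives the stated bound. I do not expect a genuine obstacle here: the only care needed is to check that each one-variable Cauchy disk stays inside the relevant domain (which is immediate from $r'<r_j$ and $\sigma'<\sigma_j$), and to keep straight which norm, action radius, and strip width is paired with which function. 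Note that the standing assumption $\sigma\le 1$ from \rref{rem:sigma} plays no role in this particular lemma.
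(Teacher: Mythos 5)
Your proof is correct and follows essentially the same route as the paper's: bound each of the $2n$ partial derivatives in the Poisson bracket via one-variable Cauchy estimates on disks of radius $(r_s-r')L$ in the action variables and $\sigma_s-\sigma'$ in the angle variables, then combine term by term and sum over $i$. The extra detail you provide about freezing coordinates and checking that each Cauchy disk stays inside the relevant domain is a faithful unpacking of what the paper cites more tersely as ``Cauchy's theorem.''
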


\begin{proof}
  By Cauchy's theorem,
for $s=1,2$, and $i=1,\dots,n$,
  \begin{equa}[e:star]
    \bnorm{\frac{\partial f_s}{\partial I_i}}{L,r',\sigma_s} &\le \frac{1}{L(r_s-r')}\norm{f_s}{L,r_s, \sigma_s}~,\\
    \bnorm{\frac{\partial f_s}{\partial \phi_i}}{L,r_s,\sigma' } &\le
    \frac{1}{\sigma_s-\sigma' }\norm{f_s}{L,r_s, \sigma_s}~.
  \end{equa}
We have
\begin{equa}
  \poiss{f_1}{f_2} =\sum_{i=1}^n \frac{\partial f_1}{\partial \phi_i}
\frac{\partial f_2}{\partial I_i}-
\frac{\partial f_1}{\partial I_i}
\frac{\partial f_2}{\partial \phi_i}~.
\end{equa}
Since $f_1$ and $f_2$ are analytic on $\DD_{L,\rs}$, so are their derivatives
and $\poiss{f_1}{f_2}$ is a finite sum of analytic functions and hence
analytic on $\DD_{\min(r_1, r_2), \min(\sigma_1, \sigma_2)}$.
The bound on the norm comes from applying \eref{e:star} to
the $2n$ terms in the sum.
\end{proof}

We have the immediate
\begin{corollary}\label{cor:poissonfg}
	Let $f_1$ and $f_2$ be analytic on $\DD_{L,r, \sigma}$. Then,
for all $0<\sigma' < \sigma$ and $0<r' <r$, we have
  \begin{equs}
    \norm{\poiss{f_1}{f_2}}{L,r',\sigma' }&\le
\frac{2n}{L(r-r')(\sigma -\sigma')}  \norm{f_1}{L,r, \sigma} \norm{f_2}{L,r, \sigma}~.
  \end{equs}
\end{corollary}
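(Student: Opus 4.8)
The plan is to obtain the corollary as a direct specialization of \lref{lem:poissonf1f2}. First I would apply that lemma with the particular choice $r_1 = r_2 = r$ and $\sigma_1 = \sigma_2 = \sigma$, which is legitimate since $f_1$ and $f_2$ are, by hypothesis, both analytic on $\DD_{L,r,\sigma}$. The lemma then immediately gives that $\poiss{f_1}{f_2}$ is analytic on $\DD_{L,r,\sigma}$ and, for any $0<r'<r$ and $0<\sigma'<\sigma$, supplies the bound
\begin{equs}
\norm{\poiss{f_1}{f_2}}{L,r',\sigma'} \le \left(\frac{n}{L(r-r')(\sigma-\sigma')} + \frac{n}{L(r-r')(\sigma-\sigma')}\right) \norm{f_1}{L,r,\sigma}\,\norm{f_2}{L,r,\sigma}~.
\end{equs}

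Under this symmetric choice the two fractions in the prefactor coincide, so they sum to $\frac{2n}{L(r-r')(\sigma-\sigma')}$, while the product of norms is already exactly $\norm{f_1}{L,r,\sigma}\,\norm{f_2}{L,r,\sigma}$. This is precisely the claimed inequality, and the corollary follows at once.

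I do not anticipate any real obstacle here: the only thing to verify is that the two summands in the general prefactor of \lref{lem:poissonf1f2} become equal after the specialization, which is immediate from the formula. An alternative route would be to reprove the estimate from scratch, combining the two Cauchy estimates in \eref{e:star} --- now all taken on the single domain $\DD_{L,r,\sigma}$ --- across the $2n$ terms making up the Poisson bracket; but routing through \lref{lem:poissonf1f2} is cleaner and avoids repeating that computation.
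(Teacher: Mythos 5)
Your proof is correct and is exactly the route the paper takes: the corollary is stated as an immediate specialization of \lref{lem:poissonf1f2} with $r_1=r_2=r$, $\sigma_1=\sigma_2=\sigma$, under which the two summands in the prefactor coincide and sum to $\frac{2n}{L(r-r')(\sigma-\sigma')}$.
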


To sum the Lie series, we follow P\"oschel \cite{Poeschel_1993}.
We begin by bounding $\ad^\ell$:
\begin{lemma}\label{lem:adl}
Assume $g$ and $f$ are analytic on $\DD_{L,\rs}$. Then for all $\ell\geq 1$,
  \begin{equs}\label{e:adl}
\norm{\ad_g^{\ell} f}{L,r', \sigma'} \leq \left(\frac{4n\ell}{L(r-r')(\sigma-\sigma')}\right)^\ell \norm{g}{L,\rs}^\ell\norm{f}{L,r, \sigma}~.
\end{equs}
\end{lemma}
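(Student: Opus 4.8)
The plan is to bound $\ad_g^\ell f$ by iterating the Poisson-bracket estimate of \lref{lem:poissonf1f2}, peeling off one factor of $g$ and shrinking the analyticity domain slightly at each of the $\ell$ steps. Write $R=r-r'$ and $S=\sigma-\sigma'$. First I would fix a nested chain of domains $\DD_j=\DD_{L,r_j,\sigma_j}$, $j=0,1,\dots,\ell$, interpolating between $\DD_0=\DD_{L,r,\sigma}$ and $\DD_\ell=\DD_{L,r',\sigma'}$. The naive choice of equal steps $r_j=r-jR/\ell$, $\sigma_j=\sigma-jS/\ell$ loses an extra power of $\ell$ in the constant (a uniform partition forces one to pay a factor $\ell!$ via the cumulative widths), so instead I would use the \emph{square-root-spaced} steps $r_j=r-R\sqrt{j/\ell}$ and $\sigma_j=\sigma-S\sqrt{j/\ell}$, which are still strictly decreasing in $j$.

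The heart of the argument is an induction on $j$: the function $\ad_g^j f$ is analytic on $\DD_j$ and satisfies
\begin{equ}
\norm{\ad_g^j f}{L,r_j,\sigma_j}\le\left(\frac{4n\ell}{LRS}\right)^{j}\norm{g}{L,r,\sigma}^{j}\,\norm{f}{L,r,\sigma}~.
\end{equ}
The case $j=0$ is trivial. For the step from $j-1$ to $j$ I would write $\ad_g^j f=\{\ad_g^{j-1}f,g\}$ and apply \lref{lem:poissonf1f2} with $f_1=\ad_g^{j-1}f$ on $\DD_{j-1}$ and $f_2=g$ on the \emph{full} domain $\DD_0$, estimating the bracket on $\DD_j$. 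Keeping $g$ on the full domain is what makes the constant come out right: the two denominators supplied by \lref{lem:poissonf1f2} then involve only the single-step decrements $r_{j-1}-r_j$, $\sigma_{j-1}-\sigma_j$ together with the cumulative widths $r-r_j$, $\sigma-\sigma_j$. With the square-root spacing one has $r_{j-1}-r_j=\tfrac{R}{\sqrt\ell}(\sqrt j-\sqrt{j-1})$ and $r-r_j=\tfrac{R}{\sqrt\ell}\sqrt j$ (and likewise for $\sigma$), so the prefactor in \lref{lem:poissonf1f2} equals $\frac{2n\ell}{LRS}\cdot\frac{1}{\sqrt j\,(\sqrt j-\sqrt{j-1})}$. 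The elementary identity $\sqrt j-\sqrt{j-1}=1/(\sqrt j+\sqrt{j-1})$ gives $\sqrt j\,(\sqrt j-\sqrt{j-1})=\sqrt j/(\sqrt j+\sqrt{j-1})\ge\tfrac12$, so this prefactor is bounded by $4n\ell/(LRS)$ \emph{uniformly in $j$}; combining with the induction hypothesis and $\norm{g}{L,r_{j-1},\sigma_{j-1}}\le\norm{g}{L,r,\sigma}$ closes the induction. Analyticity propagates since \lref{lem:poissonf1f2} already produces a function analytic on $\DD_{L,\min(r_{j-1},r),\min(\sigma_{j-1},\sigma)}=\DD_{j-1}\supset\DD_j$. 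Taking $j=\ell$ yields $\DD_\ell=\DD_{L,r',\sigma'}$ and hence \eref{e:adl}.

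I expect the only genuine subtlety to be precisely this choice of non-uniform nesting: it is what converts a wasteful $(2en\ell/(LRS))^\ell$-type bound (obtained from equal steps together with $\ell!\ge(\ell/e)^\ell$) into the claimed $(4n\ell/(LRS))^\ell$, by arranging that every one of the $\ell$ Poisson-bracket steps contributes exactly the same factor so that no factorial ever appears. All the remaining ingredients — the analyticity bookkeeping and the arithmetic of the decrements — are routine once the domains $\DD_j$ are fixed.
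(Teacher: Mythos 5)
Your proof is correct and follows the same strategy as the paper's: iterate \lref{lem:poissonf1f2} across a nested chain of domains while always keeping $g$ on the \emph{full} domain $\DD_{L,r,\sigma}$, so that two of the four denominator slots contain the cumulative widths $r-r_j$, $\sigma-\sigma_j$ rather than single-step decrements --- this is precisely the trick that prevents the naive $\ell^2$-per-step factor and the associated $\ell!$. The only point of departure is the spacing of the intermediate domains: the paper uses the uniform half-gap partition $r_s=r'+s(r-r')/(2\ell)$, while you use the square-root spacing $r_j=r-(r-r')\sqrt{j/\ell}$; your choice is in fact the sharper one, since it makes the two terms in \lref{lem:poissonf1f2} equal and delivers exactly the stated per-step factor $4n\ell/(L(r-r')(\sigma-\sigma'))$, whereas a careful reading of the paper's arithmetic (note $r_{s+1}-r_s=(r-r')/(2\ell)$, not $(r-r')/\ell$ as its proof text asserts) actually produces $8n\ell$ per step --- a harmless slip, since only the power of $L$ is used downstream.
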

\begin{proof}
We fix $\ell\geq 1$ and estimate $ \ad_g^\ell f $ using
a sequence of nested domains.
For $s =0, 1, 2 \dots, \ell$, we write
\begin{equs}
	z_s  \equiv  (r_s , \sigma_s ) \equiv \left(r' + \frac{s (r-r')}{2\ell},  \sigma' + \frac{s (\sigma-\sigma')}{2\ell}\right)~.
\end{equs}
Applying \lref{lem:poissonf1f2} we find for $s =0, \dots, \ell-1$,
\begin{equs}
\norm{\ad_g^{\ell-s } f}{L,z_{s }} &	\leq \left(\frac{n}{L(r-r_{s })(\sigma_{s +1} -\sigma_{s })} + \frac{n}{L(r_{s +1} - r_{s })(\sigma -\sigma_{s })} \right)\\
& \qquad \times \norm{g}{L,\rs}\norm{\ad_g^{s-\ell -1}f}{L,z_{s +1}} \\
& \leq \frac{4n\ell}{L(r-r')(\sigma-\sigma')} \norm{g}{L,\rs}\norm{\ad_g^{\ell-s -1}f}{L,z_{\ell +1}}~,
\end{equs}
where we have used that $(r-r_s ) \geq (r-r')/2$, and $(\sigma-\sigma_s ) \geq (\sigma-\sigma')/2$, while
$r_{s +1} - r_{s } = (r-r')/\ell$ and $\sigma_{s +1} - \sigma_{s } = (\sigma-\sigma')/\ell$. Iterating this
we get
\begin{equs}
\norm{\ad_g^{\ell} f}{L,r', \sigma'} &= \norm{\ad_g^{\ell} f}{L,z_{0}}\leq \left(\frac{4n\ell}{L(r-r')(\sigma-\sigma')}\right)^\ell \norm{g}{L,\rs}^\ell\norm{f}{L,r, \sigma}~,
\end{equs}
where we have also used that $ \norm{f}{L,z_\ell}\leq \norm{f}{L,r, \sigma}$.
\end{proof}

\begin{proposition}\label{prop:67}
Assume that $\|g\|_{L,r, \sigma} < \infty$ and $\|f\|_{L,r, \sigma} <
\infty$ for some $r, \sigma >0$ and all large enough $L$. Moreover, let $(a_\ell)_{\ell \geq 0}$ be a bounded sequence.
Then,
for all $0<\sigma' < \sigma$, $0<r' <r$, and sufficiently large $L$ we have
for $0 \leq \ell_0 \leq \ell_1 \leq \infty$,
\begin{equa}\Label{e:echi}
 \|\sum_{\ell=\ell_0}^{\ell_1} \frac{a_\ell}{\ell!} \ad_{g}^\ell f \|_{L,r', \sigma'} \leq
2\left(\frac{4ne}{(\sigma-\sigma')
(r-r')}\frac{\norm{g}{L,\rs}}{L}\right)^{\ell_0}\norm{f}{L,\rs} \cdot  \sup_{\ell \geq 0}|\alpha_\ell|~.
\end{equa}

\end{proposition}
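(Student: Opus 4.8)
The plan is to reduce \pref{prop:67} to a geometric-series estimate, with essentially all the work already done in \lref{lem:adl}. That lemma controls a single iterated bracket $\ad_g^\ell f$ on the shrunken domain $\DD_{L,r',\sigma'}$ at the price of a constant growing like $\ell^\ell$; the content of \pref{prop:67} is that the factor $1/\ell!$ in the Lie series compensates this growth exactly, so that the tail of the series starting at $\ell_0$ has size $q^{\ell_0}$ for a ratio $q$ that is small when $L$ is large.

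First I would note that, by the triangle inequality, it suffices to bound $\sum_{\ell=\ell_0}^{\ell_1}\frac{|a_\ell|}{\ell!}\norm{\ad_g^\ell f}{L,r',\sigma'}$ and pull $\sup_{\ell\ge0}|a_\ell|$ outside. For $\ell=0$ one has $\ad_g^0 f=f$ and $\norm{f}{L,r',\sigma'}\le\norm{f}{L,\rs}$, since $\DD_{L,r',\sigma'}\subset\DD_{L,\rs}$. For $\ell\ge1$, \lref{lem:adl} gives
\[
  \frac{1}{\ell!}\norm{\ad_g^\ell f}{L,r',\sigma'}\ \le\ \frac{\ell^\ell}{\ell!}\left(\frac{4n\,\norm{g}{L,\rs}}{L(r-r')(\sigma-\sigma')}\right)^{\ell}\norm{f}{L,\rs}\ \le\ q^{\ell}\,\norm{f}{L,\rs},\qquad q:=\frac{4ne\,\norm{g}{L,\rs}}{L(r-r')(\sigma-\sigma')},
\]
where the second inequality uses the elementary bound $\ell^\ell/\ell!\le e^\ell$ (immediate from $e^\ell=\sum_{m\ge0}\ell^m/m!\ge\ell^\ell/\ell!$); the same bound holds trivially at $\ell=0$.

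I would then sum. Since $\norm{g}{L,\rs}/L\to0$ in every situation in which this proposition is invoked (there $g$ is always some $\OOd(L^z)$ with $z\le-1$)—and, more to the point, since ``sufficiently large $L$'' in the statement is to be read as ``$L$ large enough that $q\le1/2$''—one gets
\[
  \sum_{\ell=\ell_0}^{\ell_1}\frac{|a_\ell|}{\ell!}\norm{\ad_g^\ell f}{L,r',\sigma'}\ \le\ \bigl(\sup_{\ell\ge0}|a_\ell|\bigr)\norm{f}{L,\rs}\sum_{\ell=\ell_0}^{\infty}q^{\ell}\ =\ \bigl(\sup_{\ell\ge0}|a_\ell|\bigr)\norm{f}{L,\rs}\,\frac{q^{\ell_0}}{1-q}\ \le\ 2\,q^{\ell_0}\norm{f}{L,\rs}\sup_{\ell\ge0}|a_\ell|,
\]
which is exactly the bound claimed in \pref{prop:67}; the case of a finite $\ell_1$ follows a fortiori by extending the sum up to $\ell_1=\infty$. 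The same geometric majorant shows the series converges uniformly in norm on $\DD_{L,r',\sigma'}$, so its sum is analytic there, analyticity of each $\ad_g^\ell f$ on that domain being obtained by iterating \lref{lem:poissonf1f2} (it is in any case implicit in \lref{lem:adl}).

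I do not expect a genuine obstacle. The only subtle point is the $\ell^\ell$ factor produced by the nested-domain construction inside \lref{lem:adl}: this is why the estimate must be phrased for the Lie series $\sum_\ell a_\ell\ad_g^\ell f/\ell!$ rather than for the individual brackets, and why convergence of the tail (when $\ell_1=\infty$) forces us to take $L$ large enough that $q<1$, so that the factor $2$ in the statement (coming from $1/(1-q)\le2$) is justified.
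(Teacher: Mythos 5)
Your proof is correct and takes essentially the same route as the paper: apply \lref{lem:adl}, absorb the $\ell^\ell$ via $\ell^\ell/\ell!\le e^\ell$, and sum the resulting geometric series with ratio $q$, taking $L$ large enough that the tail is bounded by $2q^{\ell_0}$. The only (welcome) difference is that you explicitly handle $\ell=0$, which \lref{lem:adl} technically excludes; the paper glosses over this but the bound holds trivially there, as you note.
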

\begin{proof}
Without loss of generality we assume that $\sup_{\ell \geq 0}|\alpha_\ell| = 1$.
Using \eref{e:adl} and the bound $ \ell^\ell  / \ell!\le e^\ell $, we get
\begin{equa}
 \|\sum_{\ell=\ell_0}^{\ell_1} \frac{a_\ell }{\ell!} \ad_{g}^\ell f \|_{L,r', \sigma'}  &\le
\sum_{\ell=\ell_0}^{\infty } \frac{(4n\ell)^\ell}{\ell!(L(\sigma-\sigma')
  (r-r'))^\ell}\norm{g}{L,\rs}^\ell\norm{f}{L,\rs}\\
& \le \sum_{\ell=\ell_0}^{\infty } \left(\frac{4ne }{(\sigma-\sigma')
  (r-r')}\frac{\norm{g}{L,\rs}}{L}\right)^\ell\norm{f}{L,\rs}\\
& = \left(\frac{4ne}{(\sigma-\sigma')
(r-r')}\frac{\norm{g}{L,\rs}}{L}\right)^{\ell_0}\norm{f}{L,\rs} \times
\\
& \qquad \sum_{\ell=0}^\infty \left(\frac{4ne }{(\sigma-\sigma')
  (r-r')}\frac{\norm{g}{L,\rs}}{L}\right)^{\ell}~,\Label{e:last}
\end{equa}
which gives the desired result provided $L$ is sufficiently
large so that the series in the last line is bounded by 2.
Note that the bound is decreasing when $L$ increases.

In case $\ell_1 = \infty$,
the result is indeed an analytic function, as the series converges
 uniformly on the open complex domain $\DD_{L,r', \sigma'}$.
\end{proof}

\end{appendices}

\section*{References}

\bibliographystyle{JPE}
\bibliography{refs}

\begin{thebibliography}{10}

\bibitem{Bambusi_1996}
D.~Bambusi.
\newblock Exponential stability of breathers in {H}amiltonian networks of
  weakly coupled oscillators.
\newblock {\em Nonlinearity\/} {\bf 9} (1996), 433--457.

\bibitem{Bambusi_1999}
D.~Bambusi.
\newblock On long time stability in {H}amiltonian perturbations of non-resonant
  linear {PDE}s.
\newblock {\em Nonlinearity\/} {\bf 12} (1999), 823--850.

\bibitem{Bambusi_Nekhoroshev_1998}
D.~Bambusi and N.~N. Nekhoroshev.
\newblock A property of exponential stability in nonlinear wave equations near
  the fundamental linear mode.
\newblock {\em Phys. D\/} {\bf 122} (1998), 73--104.

\bibitem{Beck_Wayne_2013}
M.~Beck and C.~E. Wayne.
\newblock Metastability and rapid convergence to quasi-stationary bar states
  for the two-dimensional {N}avier-{S}tokes equations.
\newblock {\em Proc. Roy. Soc. Edinburgh Sect. A\/} {\bf 143} (2013), 905--927.

\bibitem{Benettin_Gallavotti_1986}
G.~Benettin and G.~Gallavotti.
\newblock Stability of motions near resonances in quasi-integrable
  {H}amiltonian systems.
\newblock {\em J. Statist. Phys.\/} {\bf 44} (1986), 293--338.

\bibitem{Bernardin_Olla_2011}
C.~Bernardin and S.~Olla.
\newblock Transport properties of a chain of anharmonic oscillators with random
  flip of velocities.
\newblock {\em J. Stat. Phys.\/} {\bf 145} (2011), 1224--1255.

\bibitem{Bonetto_Lebowitz_Rey-Bellet_2000}
F.~Bonetto, J.~L. Lebowitz, and L.~Rey-Bellet.
\newblock Fourier's law: a challenge to theorists.
\newblock In: {\em Mathematical physics 2000\/} (Imp. Coll. Press, London,
  2000), pp. 128--150.

\bibitem{CALLEJA2013978}
R.~C. Calleja, A.~Celletti, and R.~de~la Llave.
\newblock A {KAM} theory for conformally symplectic systems: Efficient
  algorithms and their validation.
\newblock {\em J. Differential Equations\/} {\bf 255} (2013), 978--1049.

\bibitem{Celletti_Chierchia_2009}
A.~Celletti and L.~Chierchia.
\newblock Quasi-periodic attractors in celestial mechanics.
\newblock {\em Arch. Ration. Mech. Anal.\/} {\bf 191} (2009), 311--345.

\bibitem{Celletti_Lhotka_2012}
A.~Celletti and C.~Lhotka.
\newblock Normal form construction for nearly-integrable systems with
  dissipation.
\newblock {\em Regul. Chaotic Dyn.\/} {\bf 17} (2012), 273--292.

\bibitem{Cuneo_Eckmann_2016}
N.~Cuneo and J.-P. Eckmann.
\newblock Non-equilibrium steady states for chains of four rotors.
\newblock {\em Comm. Math. Phys.\/} {\bf 345} (2016), 185--221.

\bibitem{Cuneo_Eckmann_Poquet_2015}
N.~Cuneo, J.-P. Eckmann, and C.~Poquet.
\newblock Non-equilibrium steady state and subgeometric ergodicity for a chain
  of three coupled rotors.
\newblock {\em Nonlinearity\/} {\bf 28} (2015), 2397--2421.

\bibitem{Cuneo_Poquet_2016}
N.~Cuneo and C.~Poquet.
\newblock On the relaxation rate of short chains of rotors interacting with
  langevin thermostats.
\newblock {\em Electron. Commun. Probab.\/} {\bf 22} (2017), paper no. 35, 8pp.

\bibitem{DeRoeck_Huveneers_2015}
W.~De~Roeck and F.~Huveneers.
\newblock Asymptotic localization of energy in nondisordered oscillator chains.
\newblock {\em Comm. Pure Appl. Math.\/} {\bf 68} (2015), 1532--1568.

\bibitem{Eckmann_Hairer_2000}
J.-P. Eckmann and M.~Hairer.
\newblock Non-equilibrium statistical mechanics of strongly anharmonic chains
  of oscillators.
\newblock {\em Comm. Math. Phys.\/} {\bf 212} (2000), 105--164.

\bibitem{Eckmann_Pillet_Rey-Bellet_1999b}
J.-P. Eckmann, C.-A. Pillet, and L.~Rey-Bellet.
\newblock Entropy production in nonlinear, thermally driven {H}amiltonian
  systems.
\newblock {\em J. Statist. Phys.\/} {\bf 95} (1999), 305--331.

\bibitem{Eckmann_Pillet_Rey-Bellet_1999a}
J.-P. Eckmann, C.-A. Pillet, and L.~Rey-Bellet.
\newblock Non-equilibrium statistical mechanics of anharmonic chains coupled to
  two heat baths at different temperatures.
\newblock {\em Comm. Math. Phys.\/} {\bf 201} (1999), 657--697.

\bibitem{Eckmann_Young_2006}
J.-P. Eckmann and L.-S. Young.
\newblock Nonequilibrium energy profiles for a class of 1-{D} models.
\newblock {\em Comm. Math. Phys.\/} {\bf 262} (2006), 237--267.

\bibitem{Gallavotti_Cohen_2004}
G.~Gallavotti and E.~G.~D. Cohen.
\newblock Nonequilibrium stationary states and entropy.
\newblock {\em Phys. Rev. E (3)\/} {\bf 69} (2004), 035104, 4.

\bibitem{Gallay_2012}
T.~Gallay.
\newblock Interacting vortex pairs in inviscid and viscous planar flows.
\newblock In: {\em Mathematical aspects of fluid mechanics\/}, volume 402 of
  {\em London Math. Soc. Lecture Note Ser.\/} (Cambridge Univ. Press,
  Cambridge, 2012), pp. 173--200.

\bibitem{Hairer_2009}
M.~Hairer.
\newblock How hot can a heat bath get?
\newblock {\em Comm. Math. Phys.\/} {\bf 292} (2009), 131--177.

\bibitem{Hairer_Mattingly_2009}
M.~Hairer and J.~C. Mattingly.
\newblock Slow energy dissipation in anharmonic oscillator chains.
\newblock {\em Comm. Pure Appl. Math.\/} {\bf 62} (2009), 999--1032.

\bibitem{Kipnis_Marchioro_Presutti_1982}
C.~Kipnis, C.~Marchioro, and E.~Presutti.
\newblock Heat flow in an exactly solvable model.
\newblock {\em J. Statist. Phys.\/} {\bf 27} (1982), 65--74.

\bibitem{Lefevere_Schenkel_2004}
R.~Lefevere and A.~Schenkel.
\newblock Perturbative analysis of anharmonic chains of oscillators out of
  equilibrium.
\newblock {\em J. Statist. Phys.\/} {\bf 115} (2004), 1389--1421.

\bibitem{Lepri_Livi_Politi_2003}
S.~Lepri, R.~Livi, and A.~Politi.
\newblock Thermal conduction in classical low-dimensional lattices.
\newblock {\em Phys. Rep.\/} {\bf 377} (2003), 1--80.

\bibitem{Lepri_Livi_Politi_2016}
S.~Lepri, R.~Livi, and A.~Politi.
\newblock Heat transport in low dimensions: introduction and phenomenology.
\newblock In: {\em Thermal transport in low dimensions\/}, volume 921 of {\em
  Lecture Notes in Phys.\/} (Springer, [Cham], 2016), pp. 1--37.

\bibitem{Celletti_Lhotka_2013}
C.~Lhotka and A.~Celletti.
\newblock Stability of nearly-integrable systems with dissipation.
\newblock {\em Internat. J. Bifur. Chaos Appl. Sci. Engrg.\/} {\bf 23} (2013),
  1350036, 25.

\bibitem{Lochak_1993}
P.~Lochak.
\newblock Hamiltonian perturbation theory: periodic orbits, resonances and
  intermittency.
\newblock {\em Nonlinearity\/} {\bf 6} (1993), 885--904.

\bibitem{Lochak_Neistad_1992}
P.~Lochak and A.~I. Neishtadt.
\newblock Estimates of stability time for nearly integrable systems with a
  quasiconvex {H}amiltonian.
\newblock {\em Chaos\/} {\bf 2} (1992), 495--499.

\bibitem{Loshak_1992}
P.~Loshak.
\newblock Canonical perturbation theory: an approach based on joint
  approximations.
\newblock {\em Uspekhi Mat. Nauk\/} {\bf 47} (1992), 59--140.

\bibitem{MacKay_Aubry_1994}
R.~S. MacKay and S.~Aubry.
\newblock Proof of existence of breathers for time-reversible or {H}amiltonian
  networks of weakly coupled oscillators.
\newblock {\em Nonlinearity\/} {\bf 7} (1994), 1623--1643.

\bibitem{Nekoroshev_1977}
N.~N. Nekhoroshev.
\newblock An exponential estimate of the time of stability of nearly integrable
  {H}amiltonian systems.
\newblock {\em Uspehi Mat. Nauk\/} {\bf 32} (1977), 5--66, 287.

\bibitem{poeschel_1990}
J.~P{\"o}schel.
\newblock Small divisors with spatial structure in infinite-dimensional
  {H}amiltonian systems.
\newblock {\em Comm. Math. Phys.\/} {\bf 127} (1990), 351--393.

\bibitem{Poeschel_1993}
J.~P{\"o}schel.
\newblock Nekhoroshev estimates for quasi-convex {H}amiltonian systems.
\newblock {\em Math. Z.\/} {\bf 213} (1993), 187--216.

\bibitem{Rey-Bellet_Thomas_2000}
L.~Rey-Bellet and L.~E. Thomas.
\newblock Asymptotic behavior of thermal nonequilibrium steady states for a
  driven chain of anharmonic oscillators.
\newblock {\em Comm. Math. Phys.\/} {\bf 215} (2000), 1--24.

\bibitem{Rey-Bellet_Thomas_2002}
L.~Rey-Bellet and L.~E. Thomas.
\newblock Exponential convergence to non-equilibrium stationary states in
  classical statistical mechanics.
\newblock {\em Comm. Math. Phys.\/} {\bf 225} (2002), 305--329.

\end{thebibliography}

\ack This work was supported by an ERC Advanced grant  290843:BRIDGES (JPE, NC) and by
the Swiss National Science Foundation Grant
165057 (NC).
The
research of CEW was supported in part by the US NSF through grant
DMS-1311553.
\end{document}